\author{Saul Glasman}
\title{Goodwillie calculus and Mackey functors}
\newcommand{\angs}[1]{\langle #1 \rangle}
\newcommand{\Alg}{\mathbf{Alg}}
\newcommand{\bb}{\mathbb}
\newcommand{\Cat}{\textbf{Cat}}
\newcommand{\CMon}{\textbf{CMon}}
\newcommand{\colim}[1]{\underset{#1}{\text{colim }}}
\newcommand{\D}{\Delta}
\newcommand{\eps}{\epsilon}
\newcommand{\End}{\mb{End}}
\newcommand{\F}{\mc{F}}
\newcommand{\Fun}{\text{Fun}}
\newcommand{\Ga}{\Gamma}
\newcommand{\ho}{\text{ho}}
\newcommand{\id}{\text{id}}
\newcommand{\im}{\text{im }}
\newcommand{\inc}{\subseteq}
\newcommand{\inj}{\hookrightarrow}
\newcommand{\iy}{\infty}
\newcommand{\La}{\Lambda}
\newcommand{\mb}{\mathbf}
\newcommand{\mc}{\mathcal}
\newcommand{\Map}{\text{Map}}
\newcommand{\Om}{\Omega}
\newcommand{\op}{\text{op}}
\newcommand{\os}{\overset}
\newcommand{\ot}{\otimes}
\newcommand{\Psh}{\mb{Psh}}
\newcommand{\Si}{\Sigma}
\newcommand{\Sp}{\textbf{Sp}}
\newcommand{\td}{\widetilde}
\newcommand{\toe}{\overset{\sim} \to}
\newcommand{\twa}{\widetilde{\mathcal{O}}}
\newcommand{\Top}{\textbf{Top}}
\newcommand{\ul}{\underline}
\newcommand{\X}{\times}
\newcommand{\Add}{\bb{A}dd}
\newcommand{\Comm}{\text{Comm}}
\newcommand{\cart}{\text{cart}}
\newcommand{\surj}{\text{surj}}
\newcommand{\Fs}{\F_\surj}
\newcommand{\Mack}{\mb{Mack}}
\theoremstyle{definition}
\newtheorem{cor}[subsection]{Corollary}
\newtheorem{dfn}[subsection]{Definition}
\newtheorem{exa}[subsection]{Example}
\newtheorem{lem}[subsection]{Lemma}
\newtheorem{prop}[subsection]{Proposition}
\newtheorem{rec}[subsection]{Recollection}
\newtheorem{rem}[subsection]{Remark}
\newtheorem{thm}[subsection]{Theorem}
\begin{document}
\maketitle
\begin{abstract}
	We show that the category of $n$-excisive functors from the $\iy$-category of spectra to a target stable $\iy$-category $\mb{E}$ is equivalent to the category of $\mb{E}$-valued Mackey functors on an indexing category built from finite sets and surjections. This new classification of polynomial functors arises from an investigation of the structure present on cross effects. The path to this result involves a pair of surprising extension theorems for polynomial functors and a discussion of some interesting topics in semiadditive $\iy$-category theory, including a formula for the free semiadditive $\iy$-category on an $\iy$-category. Our equivalence forms the basis for a set of strong analogies between functor calculus and equivariant stable homotopy theory.
\end{abstract}
\section{Introduction}

This paper is devoted to proving that $n$-excisive functors, in the sense of Goodwillie \cite{Goo91}, from the category $\Sp^{\omega}$ of finite spectra to a target stable $\iy$-category $\mb{E}$ can be represented as $\mb{E}$-valued Mackey functors on a particular indexing category, much like genuine $G$-spectra in $\mb{E}$ for a finite group $G$ \cite{GM, Bar14}. This provides an entirely new classification of $n$-excisive functors in the stable setting, and when coupled with the results of \cite{Gla15a} it recontextualizes the classification given by Arone and Ching \cite{AC15}. It is one of the key examples in both the theory of epiorbital categories developed in \cite{Gla15a} and the wider theory of orbital categories that serves as the foundation for parametrized higher algebra in \cite{BDGNS1} and its sequels. It is a topological analogue of an algebraic result of Baues, Dreckmann, Franjou and Pirashvili \cite{BDFP}.

The indexing category in question - the category that plays the role of the orbit category of the group $G$ in the analogy with equivariant stable homotopy theory - is the category $\Fs^{\leq n}$ of finite sets of cardinality at most $n$ and surjective maps. Then the category $\Mack(\Fs^{\leq n}, \mb{E})$ is by definition the category of additive functors from the semiadditive $\iy$-category 
\[A^{eff}(\F_{\Fs^{\leq n}}),\] the effective Burnside category of the formal coproduct completion of $\Fs^{\leq n}$, to $\mb{E}$. A rough statement of the main theorem is then
\begin{thm}[\ref{thm:main}]
	There is an explicit equivalence of $\iy$-categories from $\Mack(\Fs^{\leq n}, \mb{E})$ to the category $\Fun^{n-exc}(\Sp^\omega, \mb{E})$ of $n$-excisive functors from the category of finite spectra to $\mb{E}$.
\end{thm}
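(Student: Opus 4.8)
The plan is to present both sides of the claimed equivalence as categories of structure-preserving functors out of a single small $\infty$-category, and then to recognize that $\infty$-category. Since $\mb{E}$ is stable, both $\Mack(\Fs^{\leq n},\mb{E})$ — the additive functors out of $A^{eff}(\F_{\Fs^{\leq n}})$ — and $\Fun^{n\text{-}\exc}(\Sp^\omega,\mb{E})$ depend on $\mb{E}$ compatibly with the $\Sp^\omega$-linear structure, so a standard reduction lets us assume $\mb{E}=\Sp$ (or, if preferred, $\mb{E}$ presentable), where Kan extensions are freely available. I would then construct the comparison functor as $M \mapsto P_n(\mathrm{Lan}_j\, M)$, where $j\colon A^{eff}(\F_{\Fs^{\leq n}})\to\Sp^\omega$ is the canonical functor sending a finite set $S$ to the wedge $\bigvee_S \mathbb{S}$; such a $j$ exists because $\Sp^\omega$ is semiadditive, so a surjection $S\twoheadrightarrow T$ supplies both its fold map $\bigvee_S\mathbb{S}\to\bigvee_T\mathbb{S}$ and a dual transfer $\bigvee_T\mathbb{S}\to\bigvee_S\mathbb{S}$, and the double-coset relations defining the effective Burnside category are precisely the base-change identities that hold in a semiadditive category. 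Read in reverse, the functor sends an $n$-excisive $F$ to the Mackey functor whose value at $S$ is the $|S|$-th cross effect $\mathrm{cr}_{|S|}F(\mathbb{S},\dots,\mathbb{S})$ with its $\Sigma_{S}$-action, restrictions coming from pulling the defining cubes back along surjections and transfers from the norm maps on cross effects — exactly the structure on cross effects the abstract promises to investigate.

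To show this is an equivalence I would filter both sides and run a dévissage. On the Mackey side, the cardinality filtration $\Fs^{\leq 1}\subset\Fs^{\leq 2}\subset\cdots\subset\Fs^{\leq n}$ induces a finite filtration of $A^{eff}(\F_{\Fs^{\leq n}})$, hence of $\Mack(\Fs^{\leq n},\mb{E})$, whose $k$-th subquotient is supported on the single orbit $\{1,\dots,k\}$; since its automorphisms are $\Sigma_k$ and, modulo the previous stage, there are no transfers from smaller sets, this subquotient is $\Fun(B\Sigma_k,\mb{E})$. On the functor-calculus side the Goodwillie tower $P_0\to P_1\to\cdots\to P_n$ provides the matching filtration, and the $k$-th layer — the $k$-homogeneous functors $\Sp^\omega\to\mb{E}$ — is classified by the $k$-th derivative: as $\mb{E}$ is stable and $\Sp^\omega$ is generated under finite colimits and desuspension by $\mathbb{S}$, a symmetric multilinear functor $(\Sp^\omega)^k\to\mb{E}$ is determined by its value at $(\mathbb{S},\dots,\mathbb{S})$ together with the $\Sigma_k$-action, so this layer is again $\Fun(B\Sigma_k,\mb{E})$. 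I would then verify that the comparison functor carries one filtration to the other and induces the identity of $\Fun(B\Sigma_k,\mb{E})$ on each subquotient; since equivalences are detected on the associated graded of a finite filtration, an induction on $n$ concludes. The promised formula for the free semiadditive $\infty$-category on an $\infty$-category enters here: it is the tool that computes the mapping objects and the subquotients of $A^{eff}(\F_{\Fs^{\leq n}})$, and hence lets one recognize the essential image of the restriction functor.

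The main obstacle is the fully faithful half of the comparison — equivalently, the two ``surprising extension theorems.'' Matching the associated graded is comparatively formal; the delicate points are (i) the coherence needed to see that the fold and transfer maps on wedges of spheres really do assemble into a functor $j$ out of the effective Burnside category, and, more seriously, (ii) the claim that $P_n\circ\mathrm{Lan}_j$ and the cross-effects functor are mutually inverse, i.e.\ that an $n$-excisive functor on $\Sp^\omega$ is recovered as a Kan extension of its restriction along $j$. It is at this point that the indexing category gets pinned down to finite sets and \emph{surjections} of cardinality at most $n$: the non-surjective maps of finite sets act only through the canonical inclusions and projections of the cross-effect direct-sum decomposition and so carry no additional homotopical information, while cross effects beyond degree $n$ vanish. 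Concretely I expect this to come down to an explicit computation with cross effects and the Taylor tower, identifying the mapping spectra in $\Fun^{n\text{-}\exc}(\Sp^\omega,\Sp)$ among the objects $j(\{1,\dots,k\})$ with the corresponding mapping spectra in $A^{eff}(\F_{\Fs^{\leq n}})$ — both being assembled from $\Sigma_k$-homotopy orbits of sphere-indexed wedges. Once that identification is secured, fully faithfulness and the determination of the essential image follow, and the theorem drops out.
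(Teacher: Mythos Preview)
Your d\'evissage strategy---matching the cardinality filtration on the Mackey side with the Goodwillie tower and identifying each graded piece with $\Fun(B\Sigma_k,\mb{E})$---is sound, and the paper uses exactly this kind of recollement argument to prove its two extension theorems (Proposition~\ref{prop:connsp} and Theorem~\ref{thm:anacon}). The gap is in the construction of the comparison functor: the additive functor $j\colon A^{eff}(\F_{\Fs^{\leq n}})\to\Sp^\omega$ with $j(S)=\bigvee_S\bb{S}$ does not exist. Your justification, that ``the double-coset relations defining the effective Burnside category are precisely the base-change identities that hold in a semiadditive category,'' is true for $A^{eff}(\F)$ but false for $A^{eff}(\F_{\Fs})$, because composition in the latter uses pullbacks in $\F_{\Fs}$, and by Lemma~\ref{lem:surjpbs} the pullback of two copies of the surjection $\{1,2\}\twoheadrightarrow\{1\}$ there has seven components of total cardinality twenty, not the four-element set $\{1,2\}^2$. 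Concretely, the composite of the span $\{1,2\}\xleftarrow{\id}\{1,2\}\to\{1\}$ with its reverse is sent by your $j$ to $\left(\begin{smallmatrix}5&5\\5&5\end{smallmatrix}\right)$ in $\pi_0\End(\bb{S}\oplus\bb{S})$, whereas composing the images first gives $\left(\begin{smallmatrix}1&1\\1&1\end{smallmatrix}\right)$. A related symptom: your $j$ would send both the orbit $\{1,2\}$ and the sum $\{1\}\oplus\{1\}$ to $\bb{S}^{\oplus 2}$, but these objects are inequivalent in $A^{eff}(\F_{\Fs^{\leq n}})$---indeed Section~\ref{sec:adjret} is devoted to the fact that this Burnside category is not complementable, so $\{1,2\}$ does not split.

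The paper therefore does not map $A^{eff}(\F_{\Fs^{\leq n}})$ into $\Sp^\omega$ at all. It runs the comparison the other way: there is a pullback-preserving functor $\phi\colon\F_\F\to\F_{\Fs}$ (Definition~\ref{def:phi}), and the induced $\phi^*$ on additive functors into any idempotent-complete additive target is an equivalence by a complementation argument (Theorem~\ref{thm:phi}). This links $A^{eff}(\F_{\Fs})$ to $A^{eff}(\F_\F)$, which by Proposition~\ref{prop:kappa} is the free semiadditive category on $A^{eff}(\F)$ and hence \emph{does} receive a map from anything semiadditive; from there the chain continues through $A(\F)$ (Theorem~\ref{thm:anacon}) and on to $\Sp^\omega$ (Corollary~\ref{cor:resaf} and Proposition~\ref{prop:connsp}). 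Your filtration argument is the engine inside those last two links, but the passage to $\F_{\Fs}$ is a genuinely separate ingredient that cannot be absorbed into a direct $j$.
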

This equivalence is such that if $F : \Sp^{\omega} \to \mb{E}$ is an $n$-excisive functor and $M_F$ is the corresponding Mackey functor, then for any $U \in \Fs^{\leq n}$, the value
\[ M_F(U) \simeq cr_U F(\bb{S}, \bb{S}, \cdots, \bb{S}), \]
the $U$-indexed cross effect of $F$ evaluated on copies of the sphere spectrum.

This work was partly motivated by the following fascinating observation, known to Arone, Ching and McCarthy. Let $\Sp$ be the $\iy$-category of spectra and let 
\[ F = ((-)^{\wedge C_2})^{C_2} : \Sp \to \Sp \]
be the functor obtained as the genuine $C_2$ fixed points of the Hill-Hopkins-Ravenel norm for the group $C_2$ \cite{HHR}. On the other hand, let 
\[ G = P_2(\Sigma^{\iy} \Omega^{\iy}) : \Sp \to \Sp \]
be the 2-excisive approximation in the sense of Goodwillie \cite{Goo03}. Then the functors $F$ and $G$, although their origins are utterly different, are actually equivalent. This prompts one to ask the following question, posed to us by Hopkins and Lurie via Barwick: what happens for higher $n$? Is there an equivariant description of $P_n(\Sigma^{\iy} \Omega^\iy)$?

To address this, we note that Theorem \ref{thm:main} brings functor calculus under the purview of the theory of parametrized higher algebra \cite{BDGNS}, which allows us to port all of our tricks from equivariant stable homotopy theory to functor calculus. This affords us a `norm' that builds an $n$-excisive functor 
\[\mb{Nm}_n(X) : \Sp \to \mb{E}\]
from an object $X \in \mb{E}$ in much the same way as the Hill-Hopkins-Ravenel norm builds a $G$-spectrum $X^{\wedge G}$ from $X$. 

We have been told by Tomer Schlank that this norm can be used to give an explicit inverse to the functor of \ref{thm:main}: if $M \in \Mack(\Fs^{\leq n}, \Sp)$ and $F_M : \Sp \to \Sp$ is the corresponding $n$-excisive functor, then
\[ F_M(X) \simeq (\mb{Nm}_n(X) \ot M)(\angs{1}), \]
where $\ot$ is the Day convolution of Mackey functors and $\angs{1}$ is a one-element set. 

As a special case, we may make the striking observation that the orbital categories $\mb{O}_{C_2}$ and $\Fs^{\leq 2}$ (minus the empty set, whose contribution is unimportant) are equivalent, giving an equivalence 
\[\Sp^{C_2} \toe \Mack(\Fs^{\leq 2}, \Sp)\] 
which is compatible with norms. In particular,
\[ \mb{Nm}_n(\bb{S}) \simeq P_n(\Si^\iy \Omega^\iy) : \Sp \to \Sp, \]
since the norm is symmetric monoidal and $P_n(\Si^\iy \Omega^\iy)$ is the unit for the Day convolution on $\Fun^{n-exc}(\Sp, \Sp)$. This neatly contextualizes the equivalence between fixed points and the 2-excisive approximation, as well as describing the situation for higher $n$. Moreover, the assignment
\[ A \mapsto F_A, \hspace{20pt} F_A(X) = (X^{\wedge C_2} \wedge A)^{C_2} \]
defines an equivalence  $\Sp^{C_2} \toe \Fun^{2-exc}(\Sp, \Sp)$, a memorable result in itself.

The equivalence of Theorem \ref{thm:main} is written as a long composite of functors, each of which is constructed and proved to be an equivalence more or less independently, and so this paper has a modular structure in which each section is both largely self-contained and, we hope, individually interesting. We now review this structure. In Section \ref{sec:semiadd}, we review the theory of semiadditive $\iy$-categories and give a new formula for the free semiadditive $\iy$-category on an $\iy$-category $\mb{C}$. This category has a particularly nice form when $\mb{C}$ is the effective Burnside category $A^{eff}(\F)$ of finite sets. In Section \ref{sec:adjret}, we learn that certain retractions whose existence we can count on in the additive setting may be absent in a semiadditive $\iy$-category, and we discuss how to adjoin missing retractions universally. In Section \ref{sec:calcburn}, we prove results, some of which are folklore, implying that $n$-excisive functors on the category of finite spectra can be reduced to certain combinatorially defined objects. In Section \ref{sec:anacon}, we prove an intriguing analytic continuation theorem. Finally, in Section \ref{sec:synth}, we string all of our equivalences together and prove the main theorem.

The reader may notice a formal similarity between the analytic continuation theorem, Theorem \ref{thm:anacon}, and Proposition \ref{prop:connsp}, which extends polynomial functors from connective spectra to all spectra. In forthcoming joint work with Clark Barwick, Akhil Mathew and Thomas Nikolaus, we will explore this connection and apply it to the study of polynomial functoriality.

We now pause to note some connections with previous work. There is a body of important work on the classification of $n$-excisive functors, notably including \cite{AC15}, \cite{AC14}, \cite{McCN}, and an unpublished theorem of Dwyer and Rezk which appears as Proposition 3.15 and Theorem 3.82 in \cite{AC14}. The latter theorem should be considered the analog of our main result for functors from $\Top$ to $\Sp$. The former two references have the advantage of dealing with polynomial functors for which the source, target or both is unstable, and pertain primarily to structure on the derivatives rather than the cross effects. This makes them thematically somewhat distinct from our work. However, we note that \cite[Proposition 4.24]{AC14} - which, for $L = 0$, gives the cross effects of a functor $F : \Sp \to \Sp$ the structure of a right module over the nonunital commutative operad \textbf{Com} - combines with \cite[Example 1.17]{AC14} to give a functor
\[ d^0(F): \Fs^\op \to \Sp\]
whose values are the cross effects of $F$. Presumably $d^0(F)$ satisfies
\[ d^0(F) \simeq DR^0(F \circ \Sigma^\iy) : \Fs^\op \to \Sp \]
where $DR^0(F \circ \Sigma^\iy)$ is the functor associated to $F \circ \Sigma^\iy : \Top \to \Sp$ by the Dwyer-Rezk classification. This is the contravariant half of the Mackey functor structure we give.

Moreover, the penultimate paragraph of the introduction of \cite{AC14} in some sense anticipates the present paper: we suspect that our main result can be viewed as an ``unrolling" of \cite[Theorem 7.6]{McCN}, which exhibits the \emph{sum} of the cross effects of an $n$-excisive functor as a module over a certain ring. If so, McCarthy's result could be reconstructed from ours using the Schwede-Shipley theorem \cite[Theorem 7.1.2.1]{HA} together with an analysis of the endomorphism ring of the unit $n$-excisive functor $P_n(\Si^\iy_+ \Om^\iy)$.

Finally, some important points of notation and convention. Polynomial functors with source a presentable category will be assumed to preserve filtered colimits. When the target category $\mb{E}$ is presentable, the category of $n$-excisive functors from $\Sp^{\omega} \to \mb{E}$ is equivalent to the category of functors from the category $\Sp$ of all spectra to $\mb{E}$ which are $n$-excisive and preserve filtered colimits, and we will sometimes use this equivalence implicitly. All ``categories" or ``$\iy$-categories" will be quasicategories unless otherwise specified, and thus all categorical constructions will be of the homotopy invariant sort. $\F$ is the category of finite sets and $\F_*$ is the category of finite pointed sets.

We thank Clark Barwick, Akhil Mathew, Tomer Schlank and the participants of the Bourbon Seminar for many helpful conversations related to the subject matter of this paper.

This paper was partly written while the author was supported by the National Science Foundation under agreement no. DMS-1128155. Any opinions, findings and conclusions, or recommendations expressed in this material, are those of the author and do not necessarily reflect the views of the National Science Foundation.

\section{Semiadditive $\iy$-categories and the free semiadditive $\iy$-category on an $\iy$-category}\label{sec:semiadd}
We'll start this section by collecting, for convenience, a few more-or-less well-known facts about semiadditive $\iy$-categories. Most of these results also appear in \cite[\S 2]{GGN}, although those authors use the term ``preadditive" rather than ``semiadditive". Throughout this paper, $\F_*$ will denote the category of finite pointed sets, and if $S$ is a finite pointed set, $S^\circ$ will denote the set of non-basepoint elements of $S$.
\begin{dfn}[\cite{HA}, Definition 6.1.6.13]
By a \emph{semiadditive $\iy$-category}, we mean an $\iy$-category which admits a zero object, finite products and finite coproducts and in which the natural map
\[\left( \begin{tikzcd}[column sep = tiny, row sep = tiny] \id & 0 \\ 0 & \id \end{tikzcd} \right) : X \amalg Y \to X \X Y\]
is an equivalence for any objects $X, Y$. In this situation, we'll tend to use the notation $X \oplus Y$ for both $X \amalg Y$ and $X \X Y$ and leave their equivalence implicit.
\end{dfn}

The semiadditive axiom is the line that divides combinatorics from algebra. As we'll shortly recall, the mapping spaces in a semiadditive $\iy$-category carry canonical commutative monoid structures.

\begin{lem}\label{lem:cartcocart}
Let $\mb{C}^\ot$ be a symmetric monoidal category. Then the following conditions are equivalent:
\begin{enumerate}
\item $\mb{C}^\ot$ is both cartesian and cocartesian in the sense of \cite[Definition 2.4.0.1]{HA}.
\item $\mb{C}^\ot$ is either cartesian or cocartesian in the sense of \cite[Definition 2.4.0.1]{HA}, and the underlying category $\mb{C}$ is semiadditive.
\end{enumerate}
\end{lem}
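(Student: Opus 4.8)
The plan is to show that, once we know $\mb{C}^\ot$ is cartesian or cocartesian, the semiadditivity of $\mb{C}$ is precisely equivalent to $\mb{C}^\ot$ being the other one as well. First I would record two standing observations. A cartesian symmetric monoidal $\iy$-category has all finite products and its unit $\mathbf{1}$ is terminal, and dually in the cocartesian case; and if $\mb{C}$ is semiadditive then its terminal and initial objects coincide (the zero object $0$). These reduce the finiteness clauses of the semiadditivity axiom to the existence of the binary operations together with the unit/empty case, which is supplied by $\mathbf{1}$.

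For (1) $\Rightarrow$ (2), the assertion that $\mb{C}^\ot$ is cartesian or cocartesian is immediate, so the only content is that $\mb{C}$ is semiadditive. Since $\mathbf{1}$ is both initial and terminal it is a zero object, and finite products and coproducts exist from the cartesian and cocartesian structures respectively. It remains to see that the comparison map $X \amalg Y \to X \X Y$ is an equivalence. The cocartesian structure provides an equivalence $X \amalg Y \toe X \ot Y$ and the cartesian structure an equivalence $X \ot Y \toe X \X Y$; I would compose these and compute the four components of the resulting map $X \amalg Y \to X \X Y$, obtained by pre-composing with the coproduct inclusions and post-composing with the product projections. Using functoriality of $\ot$ and the unitality constraints, each diagonal component reduces to $\id_X \ot (\mathbf{1} \to \mathbf{1})$, which is the identity because $\mathbf{1}$ has contractible endomorphism space, while each off-diagonal component factors through $\mathbf{1} \ot \mathbf{1} \simeq 0$ and hence vanishes. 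Since a map from a coproduct to a product is determined by its matrix of components, this equivalence must be the comparison map, which is therefore an equivalence.

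For (2) $\Rightarrow$ (1), by passing to $\mb{C}^\op$ — which exchanges cartesian and cocartesian structures and preserves semiadditivity — it suffices to treat the case where $\mb{C}^\ot$ is cartesian and $\mb{C}$ is semiadditive, and show $\mb{C}^\ot$ is also cocartesian. The unit $\mathbf{1}$ is terminal, hence by semiadditivity is the zero object, hence also initial, so what remains is to check that for all $X, Y$ the canonical maps $X \simeq X \ot \mathbf{1} \to X \ot Y \leftarrow \mathbf{1} \ot Y \simeq Y$ exhibit $X \ot Y \simeq X \X Y$ as the coproduct of $X$ and $Y$. The same component computation as above identifies these two maps with $(\id_X, 0)$ and $(0, \id_Y) \colon X, Y \to X \X Y$, so this is exactly the statement that $X \X Y$ with the matrix inclusions is a coproduct, which is the semiadditivity axiom for $\mb{C}$. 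The one delicate point throughout is the bookkeeping in this component computation: one must be sure the equivalence assembled from the cartesian and cocartesian structures genuinely agrees with the structure map $X \amalg Y \to X \X Y$ appearing in the definition of a semiadditive $\iy$-category, and this is guaranteed precisely by the ``detected by components'' principle together with the zero object absorbing the off-diagonal terms. Everything else is a routine unwinding of \cite[Definition 2.4.0.1]{HA}.
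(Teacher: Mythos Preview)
Your proof is correct, and in fact considerably more detailed than the paper's own treatment: the paper simply writes ``The proof is immediate'' and moves on. What you have done is spell out precisely the component computation that makes the statement immediate, and your unwinding of \cite[Definition 2.4.0.1]{HA} --- checking that the composite equivalence $X \amalg Y \toe X \ot Y \toe X \X Y$ has the correct matrix, and dually that the canonical maps into $X \ot Y$ in the cartesian case are the matrix inclusions --- is exactly the content the author has in mind.
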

The proof is immediate. \qed

If $\mb{C}$ is semiadditive, we'll denote the cartesian, or equivalently the cocartesian, symmetric monoidal category associated to $\mb{C}$ by $\mb{C}^\oplus$.
\begin{cor}
Let $\mb{SMCat}^{\oplus}_\iy$ be the $\iy$-category of symmetric monoidal $\iy$-categories which are both cartesian and cocartesian and symmetric monoidal functors, and let $\Cat^\oplus_\iy$ be the $\iy$-category of semiadditive $\iy$-categories and direct-sum-preserving functors. Then the forgetful functor
\[\theta : \mb{SMCat}^{\oplus}_\iy \to \Cat^\oplus_\iy\]
is an equivalence.
\end{cor}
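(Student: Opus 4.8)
The plan is to exhibit $\theta$ as an equivalence by constructing a candidate inverse and checking that the two composites are equivalent to the respective identities, leaning throughout on Lemma \ref{lem:cartcocart} to translate between the symmetric monoidal and the underlying-categorical pictures. First I would recall the relevant universal properties from \cite[\S 2.4]{HA}: the cartesian structure on an $\iy$-category $\mb{C}$ with finite products is canonical — $\mathbf{SMCat}^\times_\iy \to \Cat^\times_\iy$ (symmetric monoidal $\iy$-categories of cartesian type and lax monoidal functors preserving the cartesian structure, versus $\iy$-categories with finite products and product-preserving functors) is an equivalence — and dually for the cocartesian structure. So on a semiadditive $\mb{C}$ there is a \emph{unique} symmetric monoidal structure refining either the product or the coproduct, and by Lemma \ref{lem:cartcocart} that structure is automatically of both types simultaneously; this is the object-level inverse $\mb{C} \mapsto \mb{C}^\oplus$.

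Next I would handle morphisms. Given a direct-sum-preserving functor $f : \mb{C} \to \mb{D}$ between semiadditive $\iy$-categories, $f$ preserves finite products, so by the cartesian universal property it lifts uniquely to a symmetric monoidal functor $\mb{C}^\times \to \mb{D}^\times$; since on each side the cartesian and cocartesian monoidal structures coincide (Lemma \ref{lem:cartcocart}), this is a symmetric monoidal functor $\mb{C}^\oplus \to \mb{D}^\oplus$ in $\mathbf{SMCat}^\oplus_\iy$. Functoriality and the behavior on higher simplices of the mapping $\iy$-categories likewise follow from the corresponding universal properties, applied not just to objects and $1$-morphisms but to the whole mapping-space structure — in practice one feeds the universal property a map of simplicial sets $K \to \mathbf{SMCat}^\oplus_\iy$ or $K \to \Cat^\oplus_\iy$ and notes that $\theta$ composed with the construction, and the construction composed with $\theta$, each agree with the identity on such families because the cartesian (resp.\ cocartesian) refinement is unique up to a contractible space of choices. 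This gives natural transformations $\theta \circ (-)^\oplus \Rightarrow \id$ and $(-)^\oplus \circ \theta \Rightarrow \id$ which are objectwise equivalences, hence equivalences.

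The one genuine subtlety — and the step I expect to be the main obstacle — is bookkeeping the interaction of the \emph{two} universal properties. The cartesian universal property alone only controls lax monoidal (product-preserving) functors, while a priori a symmetric monoidal functor between cartesian-and-cocartesian categories is a strong monoidal functor that preserves coproducts too; one must check that on this subclass of categories ``product-preserving lax monoidal'' and ``genuine symmetric monoidal'' coincide, and that the space of symmetric monoidal functors $\mb{C}^\oplus \to \mb{D}^\oplus$ is therefore the same whether computed via the cartesian or the cocartesian universal property. This is where Lemma \ref{lem:cartcocart} does the real work: it forces any symmetric monoidal functor to automatically respect the coproduct description as well, so the two computations of the functor $\iy$-category agree and there is no ambiguity in which universal property to invoke. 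Once that coherence is pinned down, assembling the inverse equivalence is formal. \qed
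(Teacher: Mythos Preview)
Your approach is correct and is essentially a spelled-out version of the paper's one-line proof, which simply combines Lemma \ref{lem:cartcocart} with \cite[Corollary 2.4.1.9]{HA}. The subtlety you flag about lax versus strong monoidal functors is already absorbed into that citation: the equivalence in HA is stated at the level of symmetric monoidal functors between cartesian symmetric monoidal $\iy$-categories and product-preserving functors between $\iy$-categories with finite products, so no separate bookkeeping of the two universal properties is required.
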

\begin{proof}
Combine Lemma \ref{lem:cartcocart} with \cite[Corollary 2.4.1.9]{HA}.
\end{proof}
\begin{cor}
Let $\CMon(\mb{C})$ be the category of commutative monoids in $\mb{C}$ \cite[Remark 2.4.2.2]{HA}. If $\mb{C}$ is semiadditive, then the forgetful functor $u : \CMon(\mb{C}) \to \mb{C}$ is a trivial Kan fibration.
\end{cor}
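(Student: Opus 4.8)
The plan is to realize $\CMon(\mb{C})$ as the $\iy$-category of commutative algebras in a \emph{cocartesian} symmetric monoidal $\iy$-category, and then to quote the fact that such categories of algebras are trivial. By definition \cite[Remark 2.4.2.2]{HA}, $\CMon(\mb{C})$ is $\mathrm{CAlg}$ of the cartesian symmetric monoidal structure on $\mb{C}$ --- which exists because a semiadditive $\iy$-category in particular has finite products --- and $u$ sends a commutative monoid to its underlying object. But since $\mb{C}$ is semiadditive, this cartesian structure is exactly $\mb{C}^\oplus$, which --- by Lemma \ref{lem:cartcocart}, or simply by the way $\mb{C}^\oplus$ was defined above --- is also a \emph{cocartesian} symmetric monoidal $\iy$-category in the sense of \cite[Definition 2.4.0.1]{HA}, on the nose. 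Thus $\CMon(\mb{C}) = \mathrm{CAlg}(\mb{C}^\oplus)$, with $\mb{C}^\oplus$ regarded as cocartesian.

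It then remains to invoke the structural fact from \cite{HA} that for any cocartesian symmetric monoidal $\iy$-category $\mb{D}$ --- in which the unit is the zero object and the multiplication on an object is its fold map --- the forgetful functor $\mathrm{CAlg}(\mb{D}) \to \mb{D}$ is a trivial Kan fibration. Taking $\mb{D} = \mb{C}^\oplus$ gives the corollary.

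The point I expect to demand the most care is that we must produce a genuine \emph{trivial Kan fibration} and not merely a categorical equivalence. This is why the result does not follow just by combining the preceding corollary with the classical description of commutative monoids via the cartesian structure, and it is also why it matters to pass through Lemma \ref{lem:cartcocart}, so that the cocartesian result applies to $\mb{C}^\oplus$ \emph{directly} --- transporting along a symmetric monoidal equivalence $\mb{C}^\oplus \simeq \mb{C}^\amalg$ would only carry across the property of being a categorical equivalence. Should the form of the cocartesian result one cites deliver only an equivalence of $\iy$-categories, the gap can be closed by also observing that $u$ is a categorical fibration: it is the restriction, to the full and equivalence-closed subcategory of commutative monoids, of the evaluation $\Fun(\F_*, \mb{C}) \to \mb{C}$ at $\angs{1}$, and that evaluation is a categorical fibration, being restriction along the monomorphism $\{\angs{1}\} \inj \F_*$; and an isofibration between $\iy$-categories which is a categorical equivalence is automatically an acyclic fibration in the Joyal model structure, hence has the right lifting property against every $\del \D^n \inj \D^n$.
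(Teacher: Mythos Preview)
Your argument is correct and is essentially the same as the paper's: the paper writes the comparison as a triangle with $\Alg_{\F_*}(\mb{C}^\oplus)=\mathrm{CAlg}(\mb{C}^\oplus)$ at the apex, invoking \cite[Proposition~2.4.2.5]{HA} (cartesian side) and \cite[Proposition~2.4.3.16]{HA} (cocartesian side) for the two legs, and then notes that $u$ is a fibration to upgrade the resulting equivalence to a trivial Kan fibration. The one small slip is that \cite[Remark~2.4.2.2]{HA} gives the Segal-style definition of $\CMon(\mb{C})$, not literally $\mathrm{CAlg}(\mb{C}^\times)$ --- the identification is Proposition~2.4.2.5 --- but your final paragraph already anticipates and handles exactly this point.
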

\begin{proof}
We have a homotopy commutative diagram
\[\begin{tikzcd}\Alg_{\F_*}(\mb{C})  \ar{d}{\alpha} \ar{r}{\beta} & \CMon(\mb{C})  \ar{dl}{u} \\ \mb{C} \end{tikzcd}\]
where $\alpha$ is the restriction functor and $\beta$ arises from the cartesian structure on $\mb{C}^\oplus$ \cite[Definition 2.4.1.1]{HA}.
$u$ is always a Kan fibration. Since $\mb{C}^\oplus$ is cocartesian, $\alpha$ is an equivalence \cite[Proposition 2.4.3.16]{HA}, and since $\mb{C}^\oplus$ is cartesian, $\beta$ is an equivalence \cite[Proposition 2.4.2.5]{HA}, so $u$ is an equivalence.
\end{proof}

\begin{lem}\label{lem:semaddenr}
Semiadditive categories are naturally enriched in commutative monoids in the following sense: if $\mb{C}$ is semiadditive, then the functor 
\[\Map(-, -): \mb{C}^\op \X \mb{C} \to \Top\]
 extends canonically over the forgetful functor $\CMon \to \Top$.
\end{lem}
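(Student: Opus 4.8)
The plan is to exhibit $\Map(-,-)$ as the transpose of a co-Yoneda embedding into a semiadditive functor $\iy$-category, and then to feed that $\iy$-category into the preceding corollary (that $\CMon(\mb{D})\to\mb{D}$ is a trivial Kan fibration whenever $\mb{D}$ is semiadditive). Concretely, set $\mb{D}=\Fun^{\X}(\mb{C},\Top)$, the full subcategory of $\Fun(\mb{C},\Top)$ on the finite-product-preserving functors. Since $\mb{C}$ is semiadditive its terminal object is a zero object, so each $\Map_{\mb{C}}(X,-)$ preserves finite products, and the co-Yoneda embedding
\[ j\colon \mb{C}^\op \to \mb{D},\qquad X \mapsto \Map_{\mb{C}}(X,-) \]
is defined. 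Its transpose under the exponential adjunction $\Fun(\mb{C}^\op,\Fun(\mb{C},\Top))\simeq\Fun(\mb{C}^\op\X\mb{C},\Top)$ is precisely $\Map_{\mb{C}}(-,-)$, so it suffices to produce a lift of $j$ along the forgetful functor $\Fun^{\X}(\mb{C},\CMon)\to\Fun^{\X}(\mb{C},\Top)=\mb{D}$ and transpose back. (If $\mb{C}$ is large, one runs this with $\Top$ the $\iy$-category of spaces in a larger universe; as $\mb{C}$ is locally small the image of $j$ lands in the small spaces, so nothing is lost.)

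The key step is that $\mb{D}$ is itself semiadditive. First, $\mb{C}^\op$ is semiadditive, the relevant structure — zero object, finite (co)products, and the comparison equivalence $X\amalg Y\to X\X Y$ — being visibly self-dual. Now $\mb{D}\simeq\mc{P}_\Si(\mb{C}^\op)$ is a presentable $\iy$-category, so in addition to the pointwise finite products it has finite coproducts; the constant functor at a point is terminal, and it is also initial since it is the image under $j$ of the zero object of $\mb{C}^\op$ and $j$ preserves finite coproducts. For the semiadditivity comparison, one checks that on representables the canonical map is the equivalence $j(X)\amalg j(Y)\simeq j(X\amalg_{\mb{C}^\op}Y)=j(X\X_{\mb{C}^\op}Y)\simeq j(X)\X j(Y)$, and then extends to all of $\mb{D}$ using that the representables generate $\mb{D}$ under sifted colimits and that both $-\amalg-$ and $-\X-$ commute with sifted colimits in each variable (the latter because finite products in $\mb{D}$ are computed pointwise). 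Alternatively, this is the general statement that $\mc{P}_\Si$ sends semiadditive $\iy$-categories to semiadditive $\iy$-categories, cf. \cite{GGN}. This is where essentially all of the content lies; the rest is bookkeeping.

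With $\mb{D}$ known to be semiadditive, the preceding corollary gives that the forgetful functor $u\colon\CMon(\mb{D})\to\mb{D}$ is a trivial Kan fibration, and I would choose a section $s$, so that $s\circ j\colon\mb{C}^\op\to\CMon(\mb{D})$ lifts $j$, uniquely up to a contractible space of choices. Since forming commutative monoid objects commutes with $\Fun(\mb{C},-)$ and with the passage to finite-product-preserving functors, there is a canonical identification $\CMon(\mb{D})=\CMon(\Fun^{\X}(\mb{C},\Top))\simeq\Fun^{\X}(\mb{C},\CMon)$ under which $u$ becomes postcomposition with $\CMon\to\Top$. Transposing $s\circ j$ back through the exponential adjunction then produces a functor $\mb{C}^\op\X\mb{C}\to\CMon$ whose composite with $\CMon\to\Top$ is the transpose of $u\circ s\circ j=j$, namely $\Map_{\mb{C}}(-,-)$; this is the required canonical factorization. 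The main obstacle is the one already flagged — verifying that $\mb{D}$ is semiadditive — while the remaining identifications are routine, if somewhat tedious to make fully precise at the $\iy$-categorical level.
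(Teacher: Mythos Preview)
Your argument is correct and takes a somewhat different route from the paper's two-line proof. The paper applies the preceding corollary directly to $\mb{C}^\op \X \mb{C}$, recording the square
\[\begin{tikzcd}
\CMon(\mb{C}^\op \X \mb{C}) \ar{d}{\sim} \ar{r} & \CMon \ar{d} \\
\mb{C}^\op \X \mb{C} \ar{r}{\Map} & \Top.
\end{tikzcd}\]
The paper's choice has the advantage that $\mb{C}^\op \X \mb{C}$ is obviously semiadditive, whereas you must argue (or cite from \cite{GGN}) that your $\mb{D} = \Fun^\X(\mb{C}, \Top)$ is. On the other hand, the paper leaves the construction of the top arrow entirely implicit; since $\Map(-,-)$ does not preserve finite products on $\mb{C}^\op \X \mb{C}$ (for instance $\Map(X \oplus X, Y \oplus Y) \simeq \Map(X,Y)^4$, not $\Map(X,Y)^2$), one cannot simply apply $\CMon(-)$ to $\Map$, and making the square honest essentially forces one to curry and use that $\Map(X,-)$ \emph{does} preserve products---which is precisely your maneuver. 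In short, your version spells out the step the paper suppresses, at the price of a less immediate semiadditivity verification.
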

\begin{proof}
Since $\mb{C}^\op \X \mb{C}$ is also semiadditive, we have a diagram
\[\begin{tikzcd}
\CMon(\mb{C}^\op \X \mb{C}) \ar{d}{\sim} \ar{r} & \CMon \ar{d} \\
\mb{C}^\op \X \mb{C} \ar{r} & \Top.
\end{tikzcd}\]
\end{proof}

\begin{lem} \label{lem:cmonsemiadd}
If $\mb{C}$ is any $\iy$-category with finite products, the category $\CMon(\mb{C})$ of commutative monoids in $\mb{C}$ is semiadditive.
\end{lem}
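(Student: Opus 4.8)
My plan is to verify the three clauses of the definition of a semiadditive $\iy$-category for $\CMon(\mb{C})$ essentially by hand. Recall $\CMon(\mb{C}) = \mathrm{CAlg}(\mb{C}^\X)$, equivalently the full subcategory of $\Fun(\F_*, \mb{C})$ spanned by the Segal $\F_*$-objects. Since a pointwise product of Segal objects is again Segal and the pointwise-terminal functor is Segal, $\CMon(\mb{C})$ inherits finite products from $\mb{C}$ (computed objectwise) and the forgetful functor $U \colon \CMon(\mb{C}) \to \mb{C}$ preserves them. The terminal object $\ast$ of $\mb{C}$, being terminal, carries an essentially unique commutative monoid structure $\ast_{\mathrm{triv}}$; as $U$ preserves limits this is the terminal object of $\CMon(\mb{C})$, and by the initiality of the unit object (here $\ast$, the unit of the cartesian structure) among commutative algebras \cite{HA} it is also the initial object. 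So $0 := \ast_{\mathrm{triv}}$ is a zero object, and it only remains to produce finite coproducts and to check that the canonical comparison to the product appearing in the definition of semiadditivity is an equivalence.

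By induction it is enough to treat binary coproducts (the empty one being the zero object). Given $X, Y \in \CMon(\mb{C})$, I would show that $X \X Y$, equipped with $i_X := (\id_X, 0) \colon X \to X \X Y$ and $i_Y := (0, \id_Y) \colon Y \to X \X Y$ (here $0$ is a morphism factoring through the zero object), is a coproduct; the comparison morphism $X \amalg Y \to X \X Y$ it then induces is exactly the one appearing in the definition, so this suffices. To check the universal property, I would fix $Z$ and exhibit an inverse to $(i_X^\ast, i_Y^\ast) \colon \Map_{\CMon(\mb{C})}(X \X Y, Z) \to \Map_{\CMon(\mb{C})}(X, Z) \X \Map_{\CMon(\mb{C})}(Y, Z)$ given by $(f, g) \mapsto f \boxplus g := \mu_Z \circ (f \X g) \colon X \X Y \to Z \X Z \to Z$, where $\mu_Z \colon Z \X Z \to Z$ is the multiplication of $Z$. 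Because $Z$ is \emph{commutative}, $\mu_Z$ is a morphism of commutative monoid objects (for the product structure on $Z \X Z$), so $f \boxplus g$ genuinely lies in $\CMon(\mb{C})$. The unit axiom for $Z$, giving $\mu_Z \circ (\id_Z, 0) \simeq \id_Z$, then yields $(f \boxplus g) \circ i_X \simeq f$ and $(f \boxplus g) \circ i_Y \simeq g$; and for any $h \colon X \X Y \to Z$, the relation $h \circ \mu_{X \X Y} \simeq \mu_Z \circ (h \X h)$ (valid since $h$ is a monoid morphism) together with $\mu_{X \X Y} \circ (i_X \X i_Y) \simeq \id_{X \X Y}$ (the unit axiom for $X \X Y$) yields $(h \circ i_X) \boxplus (h \circ i_Y) \simeq h$. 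Hence $(i_X^\ast, i_Y^\ast)$ is an equivalence and $X \X Y$ is the coproduct.

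The hard part will be making rigorous the two assertions that are utterly trivial in the one-categorical Eckmann--Hilton argument: that $\mu_Z$ underlies a morphism in $\CMon(\mb{C})$, and that a morphism in $\CMon(\mb{C})$ commutes with the multiplications up to coherent homotopy. Both are manifestations of the idempotence of the commutative $\iy$-operad --- concretely, of the fact that the forgetful functor $\CMon(\CMon(\mb{C})) \to \CMon(\mb{C})$ is an equivalence \cite{HA} --- since this says precisely that a commutative monoid object is canonically a commutative monoid object internal to $\CMon(\mb{C})$, which packages exactly the coherence data used above. (If one is willing to quote more, this last equivalence combined with the characterization of a semiadditive $\iy$-category as an $\iy$-category $\mb{D}$ with finite products for which $\CMon(\mb{D}) \to \mb{D}$ is an equivalence \cite{GGN} proves the lemma outright, bypassing the computation; compare \cite[\S 2]{GGN}.)
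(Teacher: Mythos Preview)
Your argument is correct, but the paper takes a slicker route. Rather than verifying the universal property of the coproduct by hand via an Eckmann--Hilton argument, the paper invokes the symmetric monoidal structure on $\Alg_{\Comm}(\mb{C}^\X)$ constructed in \cite{HA}: this structure is \emph{cocartesian} by \cite[Proposition 3.2.4.7]{HA}, and by unwinding the description of its cocartesian edges one sees that the unit is the terminal object and the tensor product is the pointwise product, so the structure is also \emph{cartesian}. Semiadditivity then drops out of Lemma~\ref{lem:cartcocart}. Your approach has the virtue of being concrete and self-contained modulo the idempotence of $\Comm$, and it makes the Eckmann--Hilton content explicit; but the coherence work you flag as ``the hard part'' is exactly what the paper sidesteps by quoting the ready-made cocartesian monoidal structure from \cite{HA}. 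In fact your parenthetical shortcut---combining $\CMon(\CMon(\mb{C})) \simeq \CMon(\mb{C})$ with the characterization of semiadditivity from \cite{GGN}---is perhaps the cleanest of all three, and is essentially the argument given in \cite[\S 2]{GGN} itself.
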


\begin{proof}
The category $\Alg_{\Comm}(\mb{C}^\X)$ of \cite[Example 3.2.4.4]{HA} has underlying $\iy$-category equivalent to $\CMon(\mb{C})$ \cite[Proposition 2.4.3.16]{HA} and is cocartesian \cite[Proposition 3.2.4.7]{HA}. We'll prove that $\Alg_\Comm(\mb{C}^\X)$ is also cartesian. Unwinding the definitions and using the characterization of the cocartesian edges of $\Alg_\Comm(\mb{C}^\X)$ given in \cite[Proposition 3.2.4.3 (4)]{HA}, we find that the unit object in $\CMon(\mb{C})$ is the final object of $\mb{C}$ with its unique commutative monoid structure, and that the tensor product $\CMon(\mb{C}) \X \CMon(\mb{C}) \to \CMon(\mb{C})$ is the pointwise product. The result follows.
\end{proof}
\begin{rem} \label{rem:cmonext}
	By the same reasoning as in the proof of Lemma \ref{lem:semaddenr}, if $\mb{C}$ is semiadditive and $\mb{D}$ admits finite products, then the category $\Fun^\X(\mb{C}, \mb{D})$ of product-preserving functors from $\mb{C}$ to $\mb{D}$ is equivalent to the category of $\Fun^\oplus(\mb{C}, \CMon(\mb{D}))$ of additive functors from $\mb{C}$ to the category of commutative monoids in $\mb{D}$.
\end{rem}
Let $\mb{C}$ be any $\iy$-category. We now turn to the problem of describing the free semiadditive $\iy$-category on $\mb{C}$: can we give a formula for the left adjoint of the forgetful functor $\Cat^\oplus_\iy \to \Cat_\iy$?

We'll start by giving a fairly abstract answer to this question; although we'll end up with something more explicit, the following result is a necessary waypoint.
\begin{prop}
Define
\[\sphericalangle : \mb{C} \to \Fun(\mb{C}^\op, \Top) \to \Fun(\mb{C}^\op, \CMon)\]
as the composition of the Yoneda embedding with the pointwise free commutative monoid functor. By Lemma \ref{lem:cmonsemiadd}, $\Fun(\mb{C}^\op, \CMon)$ is semiadditive; let $\Add^p(\mb{C})$ denote the closure of the essential image of $\sphericalangle$ under direct sums. Then
\[\sphericalangle : \mb{C} \to \Add^p(\mb{C})\]
exhibits $\Add^p(\mb{C})$ as the free semiadditive $\iy$-category on $\mb{C}$.
\end{prop}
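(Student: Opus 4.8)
The plan is to verify the universal property directly: for any semiadditive $\iy$-category $\mb{D}$, restriction along $\sphericalangle$ should induce an equivalence $\Fun^\oplus(\Add^p(\mb{C}), \mb{D}) \toe \Fun(\mb{C}, \mb{D})$. I would assemble this from two more basic universal properties, one for the Yoneda embedding into presheaves of commutative monoids and one for the passage to the direct-sum closure.

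First I would identify the universal property of $\Fun(\mb{C}^\op, \CMon)$ equipped with the map $\sphericalangle$. The functor $\CMon \to \Top$ has a left adjoint, the free commutative monoid functor, and since $\CMon$ is the free semiadditive $\iy$-category on a point (it is $\CMon(\Top) = \Fun^\X(\F_*, \Top)$, with $\Top$ the free presentable category on a point), the pointwise free commutative monoid construction $\Fun(\mb{C}^\op, \Top) \to \Fun(\mb{C}^\op, \CMon)$ exhibits the target as obtained from the source by a fiberwise free-semiadditive completion. Combined with the Yoneda embedding, which exhibits $\Fun(\mb{C}^\op, \Top)$ as the free presentable category on $\mb{C}$, this should show that for any presentable semiadditive $\mb{D}$, precomposition with $\sphericalangle$ gives an equivalence between colimit-preserving additive functors $\Fun(\mb{C}^\op, \CMon) \to \mb{D}$ and arbitrary functors $\mb{C} \to \mb{D}$; here one uses Remark \ref{rem:cmonext} to trade product-preserving functors landing in $\mb{D}$ for additive functors landing in $\CMon(\mb{D}) \simeq \mb{D}$. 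In other words, $\Fun(\mb{C}^\op, \CMon)$ is the free \emph{presentable} semiadditive $\iy$-category on $\mb{C}$.

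Next I would pass from the presentable world to the small world. The subcategory $\Add^p(\mb{C}) \inc \Fun(\mb{C}^\op, \CMon)$ is by construction the smallest full subcategory containing the essential image of $\sphericalangle$ and closed under finite direct sums; in particular it contains the zero object (an empty direct sum) and is therefore itself semiadditive, and the inclusion preserves direct sums. Now given any semiadditive $\mb{D}$ and any functor $f : \mb{C} \to \mb{D}$, I want to produce an essentially unique direct-sum-preserving extension $\bar f : \Add^p(\mb{C}) \to \mb{D}$. The idea is to first embed $\mb{D}$ into a presentable semiadditive category — for instance $\Fun^\X(\mb{D}^\op, \CMon)$ via the semiadditive Yoneda embedding of Lemma \ref{lem:semaddenr}, which is fully faithful and preserves finite direct sums — apply the presentable universal property of the previous paragraph to get a colimit-preserving additive extension to all of $\Fun(\mb{C}^\op, \CMon)$, restrict to $\Add^p(\mb{C})$, and finally check that this restriction lands in the essential image of $\mb{D}$. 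This last point holds because the objects of $\Add^p(\mb{C})$ are finite direct sums of objects $\sphericalangle(c)$, each of which goes to $f(c) \in \mb{D}$, and $\mb{D}$ is closed in its presentable envelope under the finite direct sums that occur; uniqueness follows the same way, since any two direct-sum-preserving extensions agree on the $\sphericalangle(c)$ and hence on all finite direct sums of them, which is everything.

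The main obstacle I anticipate is bookkeeping the compatibility of the \emph{additive} (finite-direct-sum) universal property with the \emph{presentable} (all-colimits) one — in particular, making precise that $\Fun(\mb{C}^\op, \CMon)$ deserves to be called the free presentable semiadditive category on $\mb{C}$, and that $\Add^p(\mb{C})$ sits inside it as the closure under the finitary operations in a way that correctly ``restricts'' the universal property. One must be careful that $\sphericalangle(c)$ need not be a compact object of $\Fun(\mb{C}^\op, \CMon)$, so $\Add^p(\mb{C})$ is not simply the compact objects; the argument must genuinely use the closure description. Everything else — semiadditivity of $\Add^p(\mb{C})$, full faithfulness and direct-sum-preservation of the various Yoneda embeddings, and the extension/uniqueness argument — is then routine.
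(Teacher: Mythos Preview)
Your approach is correct and shares its first half with the paper: both identify $\Fun(\mb{C}^\op,\CMon)$ as the free \emph{presentable} semiadditive $\iy$-category on $\mb{C}$ (the paper cites \cite[Theorem 4.6]{GGN} for this). The divergence is in how one descends from the presentable to the small universal property. You embed an arbitrary semiadditive target $\mb{D}$ into a presentable one, invoke the presentable universal property to extend, and then check the restriction to $\Add^p(\mb{C})$ lands back in $\mb{D}$; this is a direct verification that does not presuppose the existence of the free semiadditive category. The paper instead takes that existence for granted and argues indirectly: by \cite[Proposition 5.5.8.15]{HTT} the free presentable semiadditive category on $\mb{C}$ is also the nonabelian derived category of the (abstractly existing) free semiadditive category on $\mb{C}$, so the latter sits inside $\Fun(\mb{C}^\op,\CMon)$ as a full subcategory, which must then coincide with $\Add^p(\mb{C})$ since $\Add^p(\mb{C})$ is already semiadditive and contains the image of $\mb{C}$. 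The paper's route is shorter once one accepts the abstract existence and the HTT identification; yours is more self-contained but, as you correctly anticipate, carries the bookkeeping burden of matching the finitary and presentable universal properties. One small remark: your worry that $\sphericalangle(c)$ might fail to be compact is unfounded---the forgetful functor $\CMon\to\Top$ preserves filtered colimits, so its left adjoint preserves compact objects---but you are right that $\Add^p(\mb{C})$ is generally strictly smaller than the compact objects, so the closure description is still what one must use.
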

\begin{proof}
 Since $ \Fun(\mb{C}^\op, \Top)$ is the free presentable $\iy$-category on $\mb{C}$, \\ $\Fun(\mb{C}^\op, \CMon)$ is the free semiadditive presentable $\iy$-category on $\mb{C}$ by \cite[Theorem 4.6]{GGN}, in the sense that if $\mb{P}$ is a presentable semiadditive $\iy$-category, then the restriction functor
\[\Fun^L(\Fun(\mb{C}^\op, \CMon), \mb{P}) \to \Fun(\mb{C}, \mb{P})\]
is an equivalence, where $\Fun^L$ denotes the category of colimit-preserving functors.

On the other hand, the free semiadditive presentable $\iy$-category on $\mb{C}$ is also the nonabelian derived category (c.f. \ref{rec:nonabder}) of the free semiadditive $\iy$-category on $\mb{C}$, by \cite[Proposition 5.5.8.15]{HTT}, and so it contains the free semiadditive category on $\mb{C}$ as a full subcategory. This full subcategory must contain all the objects of $\Add^p(\mb{C})$, and since $\Add^p(\mb{C})$ is already semiadditive, it coincides with the free semiadditive category on $\mb{C}$.
%

\end{proof}
\begin{lem}\label{lem:Addpmapsp}
For each $X, Y \in \mb{C}$, the map
\[\sphericalangle_{X. Y} : \Map_{\mb{C}}(X, Y) \to \Map_{\Add^p(\mb{C})}(\sphericalangle(X), \sphericalangle(Y))\]
exhibits the target as the free commutative monoid on the source.
\end{lem}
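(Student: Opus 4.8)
The plan is to compute the mapping commutative monoid directly in $\Fun(\mb{C}^\op, \CMon)$, exploiting the factorization $\sphericalangle = L \circ j$, where $j : \mb{C} \to \Fun(\mb{C}^\op, \Top)$ is the Yoneda embedding and $L : \Fun(\mb{C}^\op, \Top) \to \Fun(\mb{C}^\op, \CMon)$ is the objectwise free commutative monoid functor, left adjoint to the objectwise forgetful functor $V$. The functor $L$ preserves colimits, so it carries coproducts of presheaves to direct sums; in particular $L(j(Y) \amalg j(Y)) \simeq \sphericalangle(Y) \oplus \sphericalangle(Y)$ and, since colimit-preserving functors preserve codiagonals, $L(\nabla_{j(Y)}) \simeq \nabla_{\sphericalangle(Y)}$. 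Since $\Add^p(\mb{C})$ is a full subcategory of $\Fun(\mb{C}^\op, \CMon)$ closed under direct sums, its inclusion is fully faithful and preserves the semiadditive structure, so by the naturality of the enrichment of Lemma~\ref{lem:semaddenr} it induces an equivalence of mapping commutative monoids; thus it is enough to work in $\Fun(\mb{C}^\op, \CMon)$.

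Next I would unwind the enrichment. By Remark~\ref{rem:cmonext} (or directly from the Corollary identifying $\CMon(\mb{D})$ with $\mb{D}$ for semiadditive $\mb{D}$), for objects $A, B$ of a semiadditive $\iy$-category the commutative monoid $\Map(A, B)$ has multiplication $\Map(A, B) \X \Map(A, B) \simeq \Map(A, B \oplus B) \to \Map(A, B)$ induced by the fold map $\nabla_B : B \oplus B \to B$, and unit induced by $0 \to B$. Applying the adjunction $L \dashv V$ together with the Yoneda lemma gives, naturally in $G \in \Fun(\mb{C}^\op, \CMon)$,
\[ \Map_{\Fun(\mb{C}^\op, \CMon)}(\sphericalangle(X), G) \simeq \Map_{\Fun(\mb{C}^\op, \Top)}(j(X), V G) \simeq (V G)(X). \]
Setting $G = \sphericalangle(Y)$ identifies the underlying space with $V L(\Map_{\mb{C}}(X, Y))$; setting $G = \sphericalangle(Y) \oplus \sphericalangle(Y) \simeq L(j(Y) \amalg j(Y))$ and tracing $\nabla_{\sphericalangle(Y)} \simeq L(\nabla_{j(Y)})$ through these equivalences shows the induced multiplication is $V$ applied to $L(\nabla_{\Map_{\mb{C}}(X,Y)}) : L(\Map_{\mb{C}}(X,Y)) \oplus L(\Map_{\mb{C}}(X,Y)) \to L(\Map_{\mb{C}}(X,Y))$. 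But the latter is the codiagonal of the direct sum in $\CMon$, i.e.\ the multiplication of the free commutative monoid $L(\Map_{\mb{C}}(X,Y))$. Hence $\Map_{\Add^p(\mb{C})}(\sphericalangle(X), \sphericalangle(Y)) \simeq L(\Map_{\mb{C}}(X, Y))$ as commutative monoids.

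It remains to check that $\sphericalangle_{X,Y}$ becomes, under this identification, the unit $\Map_{\mb{C}}(X, Y) \to V L(\Map_{\mb{C}}(X, Y))$ of the free--forgetful adjunction. Since $j$ is fully faithful, $\sphericalangle_{X,Y}$ is the composite of the $L$-functoriality map $\Map_{\mb{C}}(X,Y) \simeq \Map_{\Fun(\mb{C}^\op, \Top)}(j(X), j(Y)) \to \Map_{\Fun(\mb{C}^\op, \CMon)}(\sphericalangle(X), \sphericalangle(Y))$ with the equivalence above, and this composite is postcomposition with the adjunction unit $j(Y) \to V L j(Y)$, which at the object $X$ is exactly the unit map of the space $\Map_{\mb{C}}(X,Y)$. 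This proves that $\sphericalangle_{X,Y}$ exhibits its target as the free commutative monoid on $\Map_{\mb{C}}(X, Y)$.

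The main obstacle is the bookkeeping of commutative monoid structures through the chain of identifications: concretely, verifying that the fold map of $\sphericalangle(Y)$ in the semiadditive category $\Fun(\mb{C}^\op, \CMon)$ transports to the multiplication of $L(\Map_{\mb{C}}(X,Y))$ --- equivalently, that $L$, being colimit-preserving, sends a codiagonal $\nabla_A$ to the codiagonal $\nabla_{L(A)}$, which by the semiadditive structure on $\CMon$ is the monoid multiplication of $L(A)$. Once this compatibility is in place, the rest is a formal manipulation of adjunctions and the Yoneda lemma.
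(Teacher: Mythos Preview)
Your proof is correct and follows essentially the same route as the paper: both use the free--forgetful adjunction $L \dashv V$ together with the Yoneda lemma to identify $\Map_{\Add^p(\mb{C})}(\sphericalangle(X),\sphericalangle(Y)) \simeq (\sphericalangle(Y))(X) \simeq L(\Map_{\mb{C}}(X,Y))$. The paper's argument is terser---it ends with ``which gives the result, since the free commutative monoid functor is pointwise''---whereas you additionally verify that the induced commutative monoid structure on the mapping space agrees with that of $L(\Map_{\mb{C}}(X,Y))$ and that $\sphericalangle_{X,Y}$ is the adjunction unit; this extra care is welcome but not a different method.
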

\begin{proof}
Let $h_X$ denote the representable functor 
\[\Map(- ,X) \in \Fun(\mb{C}^\op, \Top).\]
We have
\begin{align*} \Map_{\Add^p(\mb{C})}(\sphericalangle(X), \sphericalangle(Y)) & \simeq \Map_{\Fun(\mb{C}^\op, \Top)}(h_X, \sphericalangle(Y)) \\
& \simeq (\sphericalangle(Y))(X), \end{align*}
which gives the result, since the free commutative monoid functor is pointwise.
\end{proof}
We'll now be able to construct our more explicit model for the free semiadditive category on $\mb{C}$.
\begin{dfn}We denote by $\F_\mb{C}$ the category obtained by adjoining formal finite coproducts to $\mb{C}$. Explicitly, $\F_\mb{C}$ is the full subcategory of $\Fun(\mb{C}^\op, \Top)$ spanned by the finite coproducts of representable presheaves. The notation derives from the intuition that objects of $\F_\mb{C}$ should be thought of as finite sets of objects of $\mb{C}$.
\end{dfn}
We'll give an alternative construction of $\F_\mb{C}$ which is more convenient for our purposes. Let $\F \inj \F_*$ be the usual subcategory inclusion, and define 
\[\mb{C}^\amalg_\F :=\mb{C}^\amalg \X_{\F_*} \F,\]
where $\mb{C}^\amalg$ is the $\iy$-operad of \cite[Construction 2.4.3.1]{HA}. An object of $\mb{C}^\amalg_\F$ is a finite set $S$ together with an $S$-tuple $(X_s)_{s \in S}$ of objects of $\mb{C}$, and a morphism 
\[f : (S, (X_s)_{s \in S}) \to (T, (Y_t)_{t \in T}) \]
is a set morphism $f_S : S \to T$ together with a morphism $f_s: X_s \to Y_{f_S(s)}$ for each $s \in S$. 

We'll denote the projection $\mb{C}^\amalg_\F \to \F$ by $\pi_0^\mb{C}$; since the projection $\Ga^* \X_{\F_*} \F \to \F$ \cite[Construction 2.4.3.1]{HA} is a cocartesian fibration, it follows from \cite[Corollary 3.2.2.13]{HTT} that $\pi_0^\mb{C}$ is a cartesian fibration, and that a morphism $f$ is cartesian if and only if $f_s$ is an equivalence for each $s \in S$.

If $*$ is a one-element set, then $\mb{C}$ is isomorphic to the fiber of $\pi_0^\mb{C}$ over $*$. Let $i : \mb{C} \to \mb{C}^\amalg_\F$ be the resulting inclusion.
\begin{lem}
The functor
\[q : \mb{C}^\amalg_\F \to \Fun((\mb{C}^\amalg_\F)^\op, \Top) \os{i^*} \to \Fun(\mb{C}^\op, \Top)\]
is fully faithful, and its essential image is $\F_\mb{C}$.
\end{lem}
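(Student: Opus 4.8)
The plan is to compute the composite $q$ completely explicitly, both on objects and on mapping spaces; the statement then falls out of the Yoneda lemma together with the fact that finite products distribute over coproducts in $\Top$.

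First I would unwind the definition. For an object $A = (T, (Y_t)_{t \in T})$ of $\mb{C}^\amalg_\F$ and an object $Z$ of $\mb{C}$, one has $q(A)(Z) \simeq \Map_{\mb{C}^\amalg_\F}(i(Z), A)$, where $i(Z) = (*, Z)$ for $*$ a one-element set. By the description of objects and morphisms of $\mb{C}^\amalg_\F$ given above, such a morphism is the data of a map of sets $* \to T$ --- equivalently an element $t \in T$ --- together with a morphism $Z \to Y_t$ in $\mb{C}$. Hence
\[ q(T, (Y_t)_{t\in T}) \simeq \coprod_{t \in T} h_{Y_t}, \]
the coproduct of the representable presheaves $h_{Y_t} = \Map_{\mb{C}}(-, Y_t)$. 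This already identifies the essential image: every $q(A)$ is a finite coproduct of representables, and every finite coproduct of representables $\coprod_{t\in T} h_{Y_t}$ arises as $q(T, (Y_t))$, so the essential image of $q$ is exactly $\F_\mb{C}$. The same unwinding shows $q \circ i$ is the Yoneda embedding of $\mb{C}$, and in particular that $i$ itself is fully faithful.

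For full faithfulness of $q$ I would compute the two mapping spaces attached to objects $A = (S, (X_s)_{s \in S})$ and $B = (T, (Y_t)_{t \in T})$. On the source side, the description of $\mb{C}^\amalg_\F$ (the space of morphisms over a fixed $f \colon S \to T$ being $\prod_{s} \Map_{\mb{C}}(X_s, Y_{f(s)})$) gives
\[ \Map_{\mb{C}^\amalg_\F}(A, B) \simeq \coprod_{f \colon S \to T} \ \prod_{s \in S} \Map_{\mb{C}}(X_s, Y_{f(s)}), \]
the coproduct being over maps of sets $f \colon S \to T$. On the target side, using that $\Map$ in a functor category carries colimits in the first variable to limits, that colimits of presheaves are computed pointwise, and the Yoneda lemma,
\[ \Map_{\Fun(\mb{C}^\op, \Top)}\Big( \coprod_{s} h_{X_s}, \coprod_{t} h_{Y_t} \Big) \simeq \prod_{s} \Big( \coprod_{t} h_{Y_t} \Big)(X_s) \simeq \prod_{s} \coprod_{t} \Map_{\mb{C}}(X_s, Y_t). \]
Since finite products distribute over coproducts in $\Top$, the right-hand side is equivalent to $\coprod_{f \colon S \to T} \prod_{s} \Map_{\mb{C}}(X_s, Y_{f(s)})$, matching the source side.

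The only point that calls for care is to check that the comparison map induced by $q$ is exactly this distributivity equivalence; I do not expect this to be a genuine obstacle, since $q$ acts on morphisms by postcomposition and the identification is forced by naturality of the Yoneda embedding of $\mb{C}^\amalg_\F$. Concretely, tracing a morphism $\varphi \colon A \to B$ lying over $f \colon S \to T$ with components $\varphi_s \colon X_s \to Y_{f(s)}$, precomposition with the inclusion $i(X_s) \to A$ (which $q$ sends to the summand inclusion $h_{X_s} \hookrightarrow \coprod_s h_{X_s}$) exhibits the restriction of $q(\varphi)$ to $h_{X_s}$ as $(\varphi_s)_* \colon h_{X_s} \to h_{Y_{f(s)}}$ followed by the inclusion of the $f(s)$-th summand --- precisely the component named by the distributivity equivalence on the summand indexed by $f$. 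Assembling the object computation of the second paragraph with this matching of mapping spaces yields that $q$ is fully faithful with essential image $\F_\mb{C}$.
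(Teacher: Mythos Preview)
Your argument is correct and follows the same underlying idea as the paper: identify $q(\ul{X})$ with the coproduct of representables $\coprod_s h_{X_s}$ and read off both the essential image and full faithfulness. The paper phrases the object-level computation in terms of right fibrations---showing that $\mb{C} \times_{\mb{C}^\amalg_\F} (\mb{C}^\amalg_\F)_{/\ul{X}} \cong \coprod_s \mb{C}_{/X_s}$ as simplicial sets over $\mb{C}$---while you compute $q(A)(Z)$ directly as a mapping space; these are equivalent descriptions of the same presheaf. Your treatment is somewhat more explicit than the paper's about the full faithfulness step (the paper leaves the mapping-space match implicit once the object computation is done), but the content is the same.
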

\begin{proof}
For each object $\ul{X} = (S, (X_s)) \in \mb{C}^\amalg_\F$, the right fibration classified by $q(\ul{X})$ is the source map 
\[\mb{C} \X_{\mb{C}^\amalg_\F} (\mb{C}^\amalg_\F)_{/ \ul{X}} \to \mb{C}.\]
As $s \in S$ varies, the maps 
\[j_s : \mb{C}_{/X_s} \to \mb{C} \X_{\mb{C}^\amalg_\F} (\mb{C}^\amalg_\F)_{/ \ul{X}}\]
with $j_s(Z \to X_s) = (Z \to X_s \to \ul{X})$ induce an isomorphism 
\[\coprod_{s \in S} \mb{C}_{/X_s} \cong \mb{C} \X_{\mb{C}^\amalg_\F} (\mb{C}^\amalg_\F)_{/ \ul{X}}.\]
\end{proof}
Let $\F_{\mb{C}}^\cart$ be the subcategory of $\pi_0^\mb{C}$-cartesian edges in $\F_\mb{C}$. We observe that $(\F_\mb{C}, \F_\mb{C}, \F_\mb{C}^\cart)$ is a disjunctive triple in the sense of \cite[Definition 5.2]{Bar14}. 
\begin{dfn}
We write 
\[\Add(\mb{C}) := A^{eff}(\F_{\mb{C}}, \F_{\mb{C}}, \F_{\mb{C}}^\cart)\] 
for the effective Burnside category of the disjunctive triple $(\F_{\mb{C}}, \F_{\mb{C}}, \F_{\mb{C}}^\cart)$ \cite[Definition 5.7]{Bar14}. It follows from \cite[5.8]{Bar14} and \cite[Proposition 4.3]{Bar14} that $\Add(\mb{C})$ is semiadditive.
\end{dfn} 

We can now state the main theorem of this section:
\begin{thm} \label{thm:addform}
The functor
\[\alpha : \mb{C} \os{i} \to \F_{\mb{C}} \to \Add(\mb{C})\]
exhibits $\Add(\mb{C})$ as the free semiadditive category on $\mb{C}$.
\end{thm}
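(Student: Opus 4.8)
The plan is to compare $\Add(\mathbf{C})$ with the model $\Add^p(\mathbf{C})$ already shown to be free, by constructing a functor between them over $\mathbf{C}$ and checking it is an equivalence. First I would produce a canonical functor $\Phi : \Add(\mathbf{C}) \to \Add^p(\mathbf{C})$ under $\mathbf{C}$. Since $\Add^p(\mathbf{C})$ is semiadditive, the universal property gives a direct-sum-preserving functor out of the free semiadditive category on $\mathbf{C}$; but I want the map going the other way, so instead I would use the universal property of the effective Burnside category. Recall $\Add(\mathbf{C}) = A^{eff}(\F_\mathbf{C}, \F_\mathbf{C}, \F_\mathbf{C}^{\cart})$, and there is a standard way (as in \cite{Bar14}) to map out of an effective Burnside category into a semiadditive $\iy$-category once one has a functor out of $\F_\mathbf{C}$ that sends the distinguished (here, $\pi_0^\mathbf{C}$-cartesian) edges to equivalences of summands and is suitably "disjunctive." The functor $\sphericalangle : \mathbf{C} \to \Add^p(\mathbf{C})$ extends over the coproduct completion $\F_\mathbf{C}$, sending $\coprod_s X_s$ to $\bigoplus_s \sphericalangle(X_s)$, and this extension carries cartesian edges to equivalences and preserves the disjunctive structure, hence factors through $\Add(\mathbf{C})$, yielding $\Phi$ with $\Phi \circ \alpha \simeq \sphericalangle$.

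Next I would verify that $\Phi$ is an equivalence. It is essentially surjective essentially by construction: $\Add^p(\mathbf{C})$ is generated under direct sums by the image of $\sphericalangle$, and every object of $\Add(\mathbf{C})$ is a direct sum of objects $\alpha(X)$ (an object $(S, (X_s))$ of $\F_\mathbf{C}$ becomes $\bigoplus_{s} \alpha(X_s)$ in the Burnside category), so the image of $\Phi$ contains a generating set and is closed under $\oplus$. For full faithfulness, since both sides are semiadditive and every object is a finite direct sum of objects of the form $\alpha(X)$, $X \in \mathbf{C}$, it suffices to compute $\Map_{\Add(\mathbf{C})}(\alpha(X), \alpha(Y))$ and compare it with $\Map_{\Add^p(\mathbf{C})}(\sphericalangle(X), \sphericalangle(Y))$. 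By Lemma \ref{lem:Addpmapsp} the latter is the free commutative monoid on $\Map_{\mathbf{C}}(X, Y)$. On the other side, a morphism $\alpha(X) \to \alpha(Y)$ in the effective Burnside category is a span $X \leftarrow W \to Y$ in $\F_\mathbf{C}$ with the left leg cartesian; since cartesian edges over $\mathbf{C}$ are those whose object part is an equivalence, $W$ is forced to be of the form $(S, (X)_{s\in S})$ for a finite set $S$ with the left leg the fold-type map, and the right leg is then the data of a map $X \to Y$ in $\mathbf{C}$ for each $s \in S$. Summing over $S$ and taking the space of such spans up to the equivalences built into $A^{eff}$, this mapping space is exactly $\coprod_{n \ge 0} (\Map_{\mathbf{C}}(X,Y)^{\times n})_{h\Sigma_n}$, the free commutative monoid on $\Map_{\mathbf{C}}(X, Y)$. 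Thus $\Phi$ induces an equivalence on these mapping spaces, and one checks it is compatible with the commutative monoid (i.e. $\oplus$) structure, so $\Phi$ is fully faithful on the generating objects, hence everywhere.

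Finally I would record that the identification of the mapping spaces is moreover \emph{compatible with composition and with the direct-sum-enrichment}, so that $\Phi$ is genuinely an equivalence of $\iy$-categories and not merely a bijection on homotopy classes; this is where the disjunctive-triple formalism of \cite{Bar14} does the real work, since it is precisely what guarantees that the span description of morphisms in $A^{eff}$ assembles coherently. The main obstacle is this last coherence point: making rigorous the claim that the groupoid of cartesian-left-leg spans $X \leftarrow W \to Y$ in $\F_\mathbf{C}$, with its natural symmetric monoidal structure under disjoint union of the indexing sets, models the free $E_\infty$-monoid on $\Map_{\mathbf{C}}(X,Y)$ — and that $\Phi$ respects this — rather than the comparatively formal essential surjectivity and object-level bookkeeping. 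An alternative, possibly cleaner, route that I would keep in reserve is to avoid mapping-space computations entirely: show directly that $\alpha : \mathbf{C} \to \Add(\mathbf{C})$ has the required universal property by checking that for any semiadditive $\mb{D}$, restriction along $\alpha$ induces an equivalence $\Fun^{\oplus}(\Add(\mathbf{C}), \mb{D}) \to \Fun(\mathbf{C}, \mb{D})$, using the known universal property of $A^{eff}$ of a disjunctive triple (additive functors out of $A^{eff}(\F_\mathbf{C}, \dots)$ correspond to functors out of $\F_\mathbf{C}$ inverting cartesian edges and preserving coproducts) together with the fact that $\F_\mathbf{C}$ is the free coproduct completion of $\mathbf{C}$; these two universal properties compose to give exactly the free semiadditive category.
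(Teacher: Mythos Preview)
Your core argument is the paper's argument: compute that $\Map_{\Add(\mb{C})}(\alpha X,\alpha Y)$ is the free $E_\infty$-monoid on $\Map_{\mb{C}}(X,Y)$, compare with Lemma~\ref{lem:Addpmapsp}, and conclude by reducing full faithfulness to generators via $\Map(\bigoplus_i X_i,\bigoplus_j Y_j)\simeq\prod_{i,j}\Map(X_i,Y_j)$. The one substantive difference is the \emph{direction} of your comparison functor, and this is exactly the source of the coherence anxiety you flag at the end.

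You try to build $\Phi:\Add(\mb{C})\to\Add^p(\mb{C})$, which forces you to invoke a mapping-out universal property of $A^{eff}$ of a disjunctive triple that the paper never states or proves, and whose precise form you leave vague. The paper instead runs the comparison the other way: since $\Add^p(\mb{C})$ has \emph{already} been shown to be the free semiadditive $\iy$-category on $\mb{C}$, and $\Add(\mb{C})$ is semiadditive by \cite{Bar14}, the map $\alpha:\mb{C}\to\Add(\mb{C})$ extends \emph{for free} to an additive functor $\beta:\Add^p(\mb{C})\to\Add(\mb{C})$. Now essential surjectivity and the mapping-space computation on generators show $\beta$ is an equivalence, and there is nothing to check about coherence or compatibility with composition: $\beta$ is a functor by construction, and a functor which is essentially surjective and fully faithful is an equivalence. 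All of the ``real work'' you worry about in your penultimate paragraph simply disappears. Your reserve route via composing universal properties would also work in principle, but again depends on a universal property of $A^{eff}$ that you would have to supply.
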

\begin{proof}
First we characterize the mapping spaces in $\Add(\mb{C})$. Let $X, Y \in \mb{C}$. Then the mapping space $\Map_{\Add(\mb{C})}(X, Y)$ is the space of diagrams
\[\begin{tikzcd} [column sep = tiny] & X^{\amalg S} \ar{dl} \ar{dr} \\ X && Y \end{tikzcd}\]
up to equivalence over $X$ and $Y$. Since the automorphism group of $X^{\amalg S}$ over $X$ is just the symmetric group $\Si_S$, this space is equivalent to
\[\coprod_{n \geq 0} \Map_{\mb{C}}(X, Y)^n_{h \Si_n},\]
and on mapping spaces $\alpha$ induces the inclusion 
\[\alpha_{X, Y}: \Map_{\mb{C}}(X, Y) \to \Map_{\Add(\mb{C})}(X, Y)\]
which exhibits the target as the free commutative monoid on the source.

Now $\alpha$ extends to an additive functor 
\[\beta: \Add^p(\mb{C}) \to \Add(\mb{C}).\]
Since every object of $\Add(\mb{C})$ is a direct sum of objects of $\mb{C}$, $\beta$ is essentially surjective, and by Lemma \ref{lem:Addpmapsp}, for any objects $X, Y \in \mb{C}$, the induced map
\[\beta_{X, Y} : \Map_{\Add^p(\mb{C})}(X, Y) \to \Map_{\Add(\mb{C})}(X, Y)\]
is an equivalence. But this implies that $\beta_{X, Y}$ is an equivalence when $X, Y$ are direct sums of objects of $\mb{C}$, since in any additive category $\mb{M}$,
\[\Map_{\mb{M}}\left( \bigoplus_i X_i, \bigoplus_j Y_j \right) \simeq \prod_{i, j} \Map_{\mb{M}} (X_i, Y_j).\]
Thus $\beta$ is fully faithful, and we're done.
\end{proof}
In the case of most interest to us, when $\mb{C}$ is $A^{eff}(\F)$, this formula becomes substantially simpler:
\begin{prop}\label{prop:kappa}
The natural inclusion
\[\kappa : A^{eff}(\F) \to A^{eff}(\F_\F)\]
exhibits the target as the free semiadditive category on the source.
\end{prop}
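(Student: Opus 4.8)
The plan is to transcribe the proof of Theorem \ref{thm:addform}, taking $\mb{C} = A^{eff}(\F)$, once the mapping spaces of $A^{eff}(\F_\F)$ are understood; this is the only new input.

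First I would observe that $\F_\F$, being the formal finite-coproduct completion of the finitely complete category $\F$, is a disjunctive $\iy$-category: pullbacks of representables are representable, and the disjoint, universal finite coproducts of the presheaf $\iy$-topos $\Fun(\F^\op, \Top)$ restrict to $\F_\F$. Hence, by \cite[5.8]{Bar14} and \cite[Proposition 4.3]{Bar14}, $A^{eff}(\F_\F)$ is semiadditive and its direct sums are computed by the coproducts of $\F_\F$. The inclusion $\F \inj \F_\F$, $X \mapsto h_X$, induces $\kappa : A^{eff}(\F) \to A^{eff}(\F_\F)$, and since the target is semiadditive, Theorem \ref{thm:addform} provides an essentially unique additive functor $\gamma : \Add(A^{eff}(\F)) \to A^{eff}(\F_\F)$ with $\gamma \circ \alpha \simeq \kappa$; it remains to show $\gamma$ is an equivalence, as that transports the universal property of $\alpha$ to $\kappa$.

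Essential surjectivity is immediate: every object of $A^{eff}(\F_\F)$ is a finite coproduct $\coprod_i h_{W_i}$ of representables, hence a finite direct sum of objects $\kappa(W_i)$, and the image of the additive functor $\gamma$ is closed under finite direct sums. For full faithfulness, the identity $\Map(\bigoplus_i A_i, \bigoplus_j B_j) \simeq \prod_{i,j}\Map(A_i, B_j)$, valid in any semiadditive $\iy$-category, reduces the claim to showing
\[\gamma_{X, Y} : \Map_{\Add(A^{eff}(\F))}(X, Y) \to \Map_{A^{eff}(\F_\F)}(h_X, h_Y)\]
is an equivalence for $X, Y \in A^{eff}(\F)$. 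By the proof of Theorem \ref{thm:addform} the source is the free commutative monoid on $\Map_{A^{eff}(\F)}(X, Y)$ with unit $\alpha_{X,Y}$. For the target: a morphism $h_X \to h_Y$ in $A^{eff}(\F_\F)$ is a span $h_X \leftarrow W \to h_Y$ with $W = \coprod_{i\in I} h_{W_i}$, and since a map from a representable into a finite coproduct of representables factors through a unique summand, such a span is the same datum as a finite set $I$ equipped, for each $i \in I$, with a span $X \leftarrow W_i \to Y$ in $\F$, taken up to isomorphism and reindexing by $I$; thus $\Map_{A^{eff}(\F_\F)}(h_X, h_Y) \simeq \coprod_{n\geq 0}\Map_{A^{eff}(\F)}(X, Y)^n_{h\Si_n}$, the addition (via Lemma \ref{lem:semaddenr}) being concatenation of spans, so that this too is the free commutative monoid on $\Map_{A^{eff}(\F)}(X,Y)$, now with unit $\kappa_{X,Y}$. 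As $\gamma$ is additive it preserves this commutative-monoid enrichment, so $\gamma_{X,Y}$ is a map of commutative monoids with $\gamma_{X,Y}\circ\alpha_{X,Y} \simeq \kappa_{X,Y}$; being the induced map between two copies of the free commutative monoid on $\Map_{A^{eff}(\F)}(X,Y)$, it is an equivalence.

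The main obstacle is the mapping-space computation in $A^{eff}(\F_\F)$: verifying that spans out of a formal coproduct of representables decompose as asserted, that the resulting groupoid of spans is exactly $\coprod_n \Map_{A^{eff}(\F)}(X,Y)^n_{h\Si_n}$, and that its semiadditive monoid structure is the free one. Granting this, the rest is a faithful copy of the argument for Theorem \ref{thm:addform}.
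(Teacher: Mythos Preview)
Your proof is correct and follows essentially the same approach as the paper's: both show that $A^{eff}(\F_\F)$ is semiadditive, that every object is a direct sum of objects in the image of $\kappa$, and that $\kappa$ induces the free commutative monoid on mapping spaces, then invoke the reasoning of Theorem~\ref{thm:addform}. The only cosmetic difference is that the paper compresses your span-decomposition computation of $\Map_{A^{eff}(\F_\F)}(h_X, h_Y)$ into a one-line appeal to the equivalence $(BG)^n_{h\Si_n} \simeq B(G \wr \Si_n)$.
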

\begin{proof}
$A^{eff}(\F_\F)$ is clearly semiadditive, and every object of $ A^{eff}(\F_\F) $ is a direct sum of objects of $ A^{eff}(\F) $. Moreover, due to the equivalence
\[ (BG)^n_{h \Si_n} \simeq B(G \wr \Si^n) ,\]
$ \kappa $ exhibits $ \Map_{A^{eff}(\F_\F)} (X,Y) $ as the free commutative monoid on $ \Map_{A^{eff}(\F)} (X,Y)$, for any objects $ X, Y \in A^{eff} (\F)$. Thus, by the same reasoning as in the proof of Theorem \ref{thm:addform}, we conclude.
\end{proof}
\section{Adjoining retractions in semiadditive $\iy$-categories}\label{sec:adjret}
In this section, we'll discuss a problem endemic to semiadditive category theory. Suppose $ \mb{C} $ is a semiadditive $\iy$-category, and 
\[ Y \os{i} \to X \os{r} \to Y \]
is a retraction diagram in $ \mb{C} $. Then $ ir $ is idempotent in $ \End(X) $. If $ \mb{C} $ were additive, then we would have a complementary idempotent $ 1-ir $ in $ \End(X) $, and if $ \mb{C} $ were moreover idempotent complete, then $ 1-ir $ would arise from a retraction
\[Y'\os{i'} \to X \os{r'} \to Y',\]
and we would obtain a direct sum decomposition
\[ X \simeq Y \oplus Y'. \]
However, if $ \mb{C} $ is merely semiadditive, then $ 1-ir $ need not exist, and therefore $ Y' $ need not exist, even if $ \mb{C} $ is idempotent complete.

As an example of this phenomenon, suppose that $G$ is a finite group and that $ \mb{B} $ is a full subcategory of the category $ \F_G $ of finite $G$-sets which is closed under pullbacks and disjoint unions. Suppose furthermore that $ \mb{B} $ contains an object of the form $ O \amalg S $, where $O$ is an orbit and $S$ is some other finite $ G $ -set, and suppose that $ \mb{B} $ contains $ O $ but not $ S $. For instance, $\mb{B}$ might be the category of finite $G$-sets with at least one element on which $G$ acts trivially. Then $A^{eff}(\mb{B})$ is semiadditive, and the retraction of $O \amalg S$ onto $O$ in $A^{eff}(\mb{B})$ gives rise to an idempotent on $O \amalg S$. But the complementary idempotent is nowhere to be found.

We claim that $A^{eff}(\F_\F)$ is precisely this kind of example. Note that the endomorphism
\[ \begin{tikzcd} [column sep=tiny]
& \{a \} \amalg \{b\}\ar{dl} \ar{dr}  \\
\{a, b\} && \{a, b\}
\end{tikzcd} \]

of $\{a,b\} \in A^{eff}(\F_\F)$ is idempotent. But it's easy to see that no complementary idempotent exists. What should the remaining summand be? Or, otherwise put, where's the total cofiber of 

\[\begin{tikzcd} \emptyset \ar{r} \ar{d} & \{a\} \ar{d} \\
\{b\} \ar{r} & \{a, b\}? \end{tikzcd}\]

In the following construction, we will universally adjoin such complementary idempotents. We'll define a \emph{complementable semiadditive category} to be, roughly speaking, a semiadditive categories in which all idempotents have complementary summands, and we'll characterize the category obtained by adjoining complements to $A^{eff}(\F_\F)$. This will be the effective Burnside category of finite sets and surjections, $A^{eff}(\F_{\Fs})$, which will play an important role in the statement of the main theorem and the remainder of this paper.

%
%

As in the introduction, let $\Fs$ be the category of finite sets and surjective maps, and let $\Fs^{\leq n} $ be the full subcategory spanned by the sets of cardinality at most $n$. The first important fact about $\Fs$ is as follows:
\begin{lem} \label{lem:surjpbs}
	$\F_{\Fs}$ admits pullbacks.
\end{lem}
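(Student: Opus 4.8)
The plan is to unwind the definition of $\F_{\Fs}$ as the full subcategory of $\Fun(\Fs^\op, \Top)$ on finite coproducts of representables, and reduce the existence of arbitrary pullbacks to the existence of pullbacks of \emph{cospans of representables}. A cospan $A \to C \leftarrow B$ in $\F_{\Fs}$ with $A = \coprod_{a} h_{U_a}$, $B = \coprod_b h_{V_b}$, $C = \coprod_c h_{W_c}$ decomposes over the index set of $C$: since the $h_{W_c}$ are connected (a representable presheaf on a category with a connected nerve of undercategories — here $\Fs$ has all the undercategories $\Fs_{W/}$ connected since any two surjections out of $W$ can be compared, or more simply one argues that each map $h_{U_a} \to C$ lands in a single $h_{W_c}$), each of $A$ and $B$ splits as a coproduct indexed by components of $C$, and the pullback, if it exists componentwise, is the coproduct of the componentwise pullbacks. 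So it suffices to treat the case $C = h_W$ representable, and then further decompose $A = \coprod_a h_{U_a}$, $B = \coprod_b h_{V_b}$: since coproducts are universal in a presheaf topos, $A \times_{h_W} B \simeq \coprod_{a,b} (h_{U_a} \times_{h_W} h_{V_b})$, so it is enough to compute $h_U \times_{h_W} h_V$ for a cospan of representables $U \to W \leftarrow V$ in $\Fs$, i.e. a pair of surjections, and show the result lies in $\F_{\Fs}$ — ideally that it is again representable, or a finite coproduct of representables.

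The heart of the matter is therefore: given surjections $f \colon U \twoheadrightarrow W$ and $g \colon V \twoheadrightarrow W$, what is the presheaf $h_U \times_{h_W} h_V$ on $\Fs$? Its value on a finite set $T$ is the set of pairs of surjections $(T \twoheadrightarrow U, T \twoheadrightarrow V)$ that become equal after composing to $W$. I expect this to be corepresented by the "honest" pullback-like object in $\Fs$, but one must be careful because the set-theoretic pullback $U \times_W V$ receives maps from $U$ and $V$ that need not be surjective, and its projections to $U$ and $V$ need not be surjective either, so $U \times_W V$ need not be an object we can use directly in $\Fs$. The right guess is that $h_U \times_{h_W} h_V$ is the coproduct, over the "surjective components" of the situation, of representables $h_{Z}$ where $Z$ ranges over the images of the relevant maps; concretely, a pair of surjections $T \twoheadrightarrow U$, $T \twoheadrightarrow V$ agreeing over $W$ is the same as a surjection from $T$ onto the image of the induced map $T \to U \times_W V$, and that image is a subset of $U \times_W V$ whose projections to $U$ and $V$ are surjective. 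Thus I would show
\[
h_U \times_{h_W} h_V \;\simeq\; \coprod_{Z} h_Z,
\]
where $Z$ runs over the (finite) set of subsets $Z \subseteq U \times_W V$ such that both projections $Z \to U$ and $Z \to V$ are surjective and $Z$ is "minimal" in the appropriate sense — or, more robustly, one shows the pullback presheaf is the coproduct of representables on the connected components of the poset/category of such $Z$'s. In any case $U \times_W V$ is a finite set, it has finitely many subsets, and each surjection $T \twoheadrightarrow Z$ with $Z$ of the stated type determines a unique compatible pair, so the presheaf is a finite coproduct of representables, hence lies in $\F_{\Fs}$.

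The main obstacle I anticipate is precisely this bookkeeping: identifying the corepresenting coproduct of the pullback of two surjections and checking that the projections to $A$ and $B$ defined on the corepresenting object are themselves covered by the maps we have built, so that the resulting square is genuinely a pullback square in $\F_{\Fs}$ (and not merely in the ambient presheaf category restricted incorrectly). A clean way to organize this is to note that $\Fs$ has a terminal-ish feature — the empty set is not terminal, but the one-point set $\langle 1 \rangle$ admits a unique surjection from every \emph{nonempty} set — and more usefully that $\Fs \hookrightarrow \F$ is the subcategory of surjections of a category with pullbacks; then $h_U \times_{h_W} h_V$ computed in $\Psh(\Fs)$ can be compared to the image of the $\F$-pullback under the right adjoint to restriction, and one checks that right adjoint sends representables to finite coproducts of representables because every finite set surjects onto finitely many quotients. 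I would also handle the degenerate cases ($W = \emptyset$, forcing $U = V = \emptyset$; or empty index sets) separately and at the outset, since coproducts over the empty set give the initial presheaf $\emptyset \in \F_{\Fs}$, which is fine. Once the representable case is settled, the reductions in the first paragraph assemble to the full statement with no further difficulty.
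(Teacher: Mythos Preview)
Your approach is the same as the paper's: reduce to a cospan $U \twoheadrightarrow W \twoheadleftarrow V$ in $\Fs$ and identify the pullback in $\F_{\Fs}$ as $\coprod_Z h_Z$, where $Z$ ranges over subsets of the set-theoretic fiber product $U \times_W V$ whose projections to $U$ and to $V$ are both surjective. Your key observation---that a compatible pair of surjections $(T \twoheadrightarrow U,\, T \twoheadrightarrow V)$ is precisely a surjection from $T$ onto its image in $U \times_W V$, and that this image is such a $Z$---is exactly the paper's one-line verification.

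The only correction is that you should drop the hedge about $Z$ being ``minimal in the appropriate sense'' or about passing to ``connected components of the poset of such $Z$'s.'' Neither is correct: \emph{every} subset $Z \subseteq U \times_W V$ with surjective projections contributes its own summand $h_Z$, because the image of $T \to U \times_W V$ can be any such subset and determines $Z$ uniquely. (For instance, with $W = \ast$ and $U = V = \{1,2\}$, a three-element $T$ can surject onto a three-element $Z \subset \{1,2\}^2$, which does not factor through any two-element ``minimal'' $Z$.) Once you state the indexing set correctly, your argument is complete and matches the paper's.
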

\begin{proof}
	It suffices to show that any diagram $\La^2_2 \to \Fs$ admits a pullback in $\F_{\Fs}$, since pullbacks distribute over coproducts, which are disjoint, in any category of the form $\F_{\mb{C}}$. But it's easy to see that the diagram
	 \[ \begin{tikzcd} \coprod_{U \inc Y \X_X Z, U \twoheadrightarrow Y, U \twoheadrightarrow Z} \ar{r} \ar{d} & Y \ar{d} \\ Z \ar{r} & X \end{tikzcd} \]
	 is a pullback square, where the coproduct is taken over subsets of the set pullback $Y \X_X Z$ which surject onto both $Y$ and $Z$ under the natural projections. Indeed, if 
	 \[\begin{tikzcd} V \ar{r} \ar{d} & Y \ar{d} \\ Z \ar{r} & X  \end{tikzcd} \]
	 is a commutative square of surjections, then $V$ surjects onto a unique such subset.
\end{proof}
\begin{dfn}\label{def:phi}

Define a functor 
\[ \phi' : \F \to \F_{\Fs} \]
by setting, for any finite set $S$,
\[\phi'(S) = \coprod_{U \inc S} U \]
and, for each morphism $f: S \to T$, letting the component of $\phi(f)$ on $U \inc X$ be the natural surjection
\[f_U : U \to \im U.\]
By the universal property of $\F_\F$, $\phi'$ extends uniquely to a coproduct-preserving functor
\[\phi : \F_\F \to \F_{\Fs}.\]
\end{dfn}
The following is an easy observation:
\begin{lem}
	$\phi$ is right adjoint to the functor 
	\[U : \F_{\Fs} \to \F_\F\]
	which is given by the identity on objects and the inclusion of setwise surjective maps into all maps on morphisms.
\end{lem}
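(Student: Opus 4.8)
The plan is to produce a bijection $\Hom_{\F_\F}(UA, B) \cong \Hom_{\F_{\Fs}}(A, \phi B)$, natural in $A \in \F_{\Fs}$ and $B \in \F_\F$, by reducing to the surjection--injection factorization on $\F$. Since $\F$ and $\Fs$ are ordinary categories, so are $\F_\F$ and $\F_{\Fs}$, and what is asked for is an ordinary adjunction, so mapping spaces are discrete and it suffices to match Hom-sets. Write $A = (S_i)_{i\in I}$ and $B = (T_j)_{j\in J}$. Unwinding the coproduct-completion description of morphisms gives
\[ \Hom_{\F_\F}(UA, B) \;\cong\; \prod_{i\in I} \coprod_{j\in J} \Hom_\F(S_i, T_j), \]
and, since by Definition \ref{def:phi} $\phi B = \coprod_{j\in J}\coprod_{V\subseteq T_j} V$ is the family indexed by $\coprod_j \mathcal{P}(T_j)$ with component $V$ at $(j,V)$,
\[ \Hom_{\F_{\Fs}}(A, \phi B) \;\cong\; \prod_{i\in I}\coprod_{j\in J}\coprod_{V\subseteq T_j} \Hom_{\Fs}(S_i, V). \]

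The heart of the matter is then the elementary fact that every map of finite sets $f : S \to T$ factors uniquely as a surjection onto its image followed by the inclusion of that image, so that $\Hom_\F(S,T) \cong \coprod_{V\subseteq T}\Hom_{\Fs}(S,V)$; substituting this into the first display produces the second. It remains to check naturality in $A$ and in $B$, which amounts to functoriality of the factorization: precomposing a representative $S\twoheadrightarrow V\hookrightarrow T$ with a surjection $S'\twoheadrightarrow S$, or postcomposing with a map $T\to T'$, and re-factoring, moves one between summands exactly as prescribed by composition in $\F_{\Fs}$ and by the definition of $\phi$ on morphisms in Definition \ref{def:phi}. Equivalently, one can exhibit the adjunction through explicit unit and counit: the unit $A\to\phi UA$ at $A=(S_i)$ selects for each $i$ the index $(i, S_i)$ corresponding to the full subset $S_i\subseteq S_i$ together with the identity surjection, and the counit $U\phi B\to B$ at $B=(T_j)$ is the family of inclusions $V\hookrightarrow T_j$ ranging over $V\subseteq T_j$; both triangle identities then collapse to uniqueness of the factorization.

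I do not expect a genuine obstacle here — the paper itself labels this an easy observation. The two points requiring a little care are getting the variance right (it is $U$, not $\phi$, that is the left adjoint, so the relevant Hom-set is $\Hom(UA,B)$ rather than $\Hom(A,UB)$), and, in the naturality step, the fact that the image subset $V$ genuinely shrinks under composition, so that one must check the bijection intertwines the morphism sets of $\F_{\Fs}$ and $\F_\F$ and not merely their underlying sets.
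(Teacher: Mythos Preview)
The paper gives no proof of this lemma; it is introduced with the sentence ``The following is an easy observation'' and left at that. Your argument is correct and is exactly the natural way to fill in the omitted verification: reduce to generators via the coproduct description of $\F_{\mb{C}}$, and then invoke the bijection $\Hom_\F(S,T) \cong \coprod_{V\subseteq T}\Hom_{\Fs}(S,V)$ coming from epi--mono factorization. Your unit and counit are also correct and match what $\phi$ does on morphisms. One small imprecision: under precomposition with a surjection $S'\twoheadrightarrow S$ the image $V$ does not shrink but stays the same, so naturality in $A$ is automatic; it is only in naturality in $B$ (postcomposition by an arbitrary map $T\to T'$) that $V$ is replaced by its image in $T'$, and this is precisely how $\phi$ acts on morphisms in Definition~\ref{def:phi}.
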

In particular,
\begin{cor}
	$\phi$ preserves pullbacks.
\end{cor}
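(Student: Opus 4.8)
The plan is to deduce this formally, using only the adjunction just established: the preceding lemma exhibits $\phi$ as a right adjoint to $U : \F_{\Fs} \to \F_\F$, and right adjoints preserve all limits that exist in their source. So it suffices to check that $\F_\F$ admits pullbacks, and then the corollary is immediate — $\phi$ carries any pullback square in $\F_\F$ to a pullback square in $\F_{\Fs}$.

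The one point requiring a moment's attention is therefore the existence of pullbacks in $\F_\F$, which I would verify by exactly the argument used in the proof of Lemma \ref{lem:surjpbs}. Pullbacks distribute over the coproducts, which are disjoint, in any category of the form $\F_\mb{C}$, so one reduces to computing the pullback of a cospan $Y \to X \leftarrow Z$ of representable presheaves. Since a morphism out of a representable into a finite coproduct of representables factors through a unique summand — representables are indecomposable in the coproduct completion, which for $\mb{C} = \F$ one checks by evaluating presheaves at $\emp$ — such a pullback is computed componentwise in $\F$, where it exists because the category of finite sets has pullbacks. (In fact this shows $\F_\mb{C}$ admits pullbacks whenever $\mb{C}$ does, with the pullback of $\coprod_i Y_i \to \coprod_k X_k \leftarrow \coprod_j Z_j$ given by $\coprod Y_i \X_{X_k} Z_j$, the coproduct taken over those $(i,j)$ whose structure maps land in a common component $X_k$.)

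I do not expect a genuine obstacle here; the content is entirely formal once the adjunction of the previous lemma is in hand, and the only care needed is the indecomposability observation that legitimizes the componentwise computation of pullbacks in the coproduct completion.
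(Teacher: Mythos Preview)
Your proof is correct and takes essentially the same approach as the paper, which deduces the corollary from the preceding lemma via the slogan ``right adjoints preserve limits'' and notes that it may also be checked by direct inspection using Lemma~\ref{lem:surjpbs}. Your added verification that $\F_\F$ admits pullbacks is a welcome bit of care that the paper leaves implicit.
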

This corollary may also be established by direct inspection using the description of the pullbacks of $\F_{\Fs}$ given in Lemma \ref{lem:surjpbs}. We deduce that $\phi$ gives rise to a functor between effective Burnside categories, which we'll abusively denote
\[ \phi : A^{eff}(\F_\F) \to A^{eff}(\F_{\Fs}).\]
We'll show that $\phi$ formally adjoins certain total cofibers to $A^{eff}(\F_\F)$.

%
%
\begin{rec}\label{rec:nonabder}
Let $\mb{C}$ be a semiadditive category, and denote 
\[\Psh^\oplus(\mb{C}) = \Fun^\X(\mb{C}^{\op}, \Top), \]
the category of \emph{additive presheaves} on $\mb{C}$, which are functors $\mb{C}^\op \to \Top$ which carry direct sums to products. Such functors factor uniquely through the forgetful functor $\CMon \to \Top$ by Remark \ref{rem:cmonext}, and so we may equivalently define
\[ \Psh^\oplus(\mb{C}) = \Fun^\oplus(\mb{C}^\op, \CMon). \]
$ \Psh^\oplus(\mb{C}) $ is freely generated as a semiadditive category under all colimits, and as a category under sifted colimits, by $\mb{C}$. It is also known as the \emph{nonabelian derived category} of $\mb{C}$. For a general reference on this object, see \cite[Definition 5.5.8.8]{HTT} and the ensuing discussion.
\end{rec}
The content of the next lemma is that each representable presheaf in \newline $\Psh^\oplus(A^{eff}(\F_{\Fs}))$ has a filtration by split monomorphisms - morphisms that ``should" be direct summand inclusions.
\begin{lem}
	Let $S$ be a finite set, let $h_S \in \Psh^\oplus(A^{eff}(\F_{\F}))$ be the functor represented by $S$, and for any $k$ with $0 \leq k \leq |S|$, let $h_S^{\leq k}$ denote the full subfunctor of $h_S$ whose value on $X \in A^{eff}(\F_{\F})$ is the subspace of $h_S(X)$ spanned by those diagrams
	\[ \begin{tikzcd}[column sep = small] & Y \ar{dr} \ar{dl} \\ X && S \end{tikzcd} \]
	for which the image in $S$ of every component of $Y$ has cardinality $\leq k$. Then the natural inclusion
	\[ J_k : h_S^{\leq k - 1} \to h_S^{\leq k} \]
	admits a retraction for each $k$ with $0 < k \leq |S|$.
	
\end{lem}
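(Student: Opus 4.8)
The plan is to construct the retraction $r_k : h_S^{\leq k} \to h_S^{\leq k-1}$ explicitly by writing down, for each $X \in A^{eff}(\F_\F)$, a natural transformation on the level of spaces of spans, and to exhibit a \emph{contracting map}: given a span $X \leftarrow Y \to S$ whose components all have image of cardinality $\leq k$ in $S$, I want to naturally modify $Y$ so that every component's image in $S$ has cardinality $\leq k-1$, in a way that restricts to the identity on those spans already landing in $h_S^{\leq k-1}$. The natural candidate is to throw away, from each component $C$ of $Y$ whose image in $S$ has cardinality exactly $k$, the part that ``uses'' the last element in some chosen total ordering of that image — more precisely, to decompose $C$ over the $k$ elements of its image and delete the preimage of a distinguished one. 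Since $A^{eff}(\F_\F)$ is a Burnside category, deleting part of $Y$ (i.e. replacing $Y$ by a subset $Y'$ with an inclusion $Y' \hookrightarrow Y$) corresponds to composing the span $X \leftarrow Y \to S$ with the span $Y \hookleftarrow Y' = Y'$, so this operation is realized by post-composition with a fixed morphism of $A^{eff}(\F_\F)$; the key point to check is that this assignment is natural in $X$, which it is because the deletion only depends on the map $Y \to S$, not on the map $Y \to X$.

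The steps, in order, would be: (1) Fix a total order on $S$; equivalently observe $S$ is arbitrary so work componentwise in $S$ — actually since $S$ is a finite set with no extra structure, I should instead phrase the construction using \emph{which} $k$-element subsets of $S$ appear, and handle all of them simultaneously, decomposing $h_S^{\leq k}$ over the poset of subsets. (2) For a span representing a point of $h_S^{\leq k}(X)$, write $Y = Y_{<k} \amalg Y_{=k}$ where $Y_{<k}$ is the union of components with image of cardinality $< k$ and $Y_{=k}$ the rest; on $Y_{=k}$, each component $C$ maps onto a $k$-element subset $T_C \subseteq S$, and I split $C = \coprod_{t \in T_C} C_t$ over the fibers of $C \to T_C$; choosing for each $k$-subset $T$ a distinguished element $\tau(T) \in T$ (this requires a choice, but since $S$ is finite we may fix one arbitrarily, and naturality in $X$ is unaffected), I set $Y' = Y_{<k} \amalg \coprod_{C} (C \setminus C_{\tau(T_C)})$. (3) Check that $X \leftarrow Y' \to S$ lies in $h_S^{\leq k-1}(X)$: each surviving component of $C \setminus C_{\tau(T_C)}$ has image contained in $T_C \setminus \{\tau(T_C)\}$, of cardinality $\leq k-1$ — here I use that $\F_\F$ has disjoint coproducts so ``component'' behaves well. (4) Check functoriality of $r_k$ under morphisms $X \to X'$ in $A^{eff}(\F_\F)$: a morphism is a span, and the deletion commutes with pullback along the backward leg because pullbacks in $\F_\F$ are computed componentwise and the image in $S$ of a component of the pullback is contained in the image of the corresponding component upstairs — this is where Lemma~\ref{lem:surjpbs}-style reasoning and disjointness of coproducts enter. (5) Check $r_k \circ J_k = \id$: if the span already lies in $h_S^{\leq k-1}$, then $Y_{=k} = \emptyset$ and nothing is deleted.

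I expect the main obstacle to be step (4): verifying that $r_k$ is an honest natural transformation of presheaves valued in spaces (not just a componentwise-on-objects assignment), i.e. that the deletion operation is compatible with composition of spans in the Burnside category and with the automorphisms of $Y$ over $X$ and $S$ that one quotients by when forming the mapping \emph{space}. The subtlety is that $h_S^{\leq k}(X)$ is a space of spans up to equivalence, so I must confirm that an automorphism of a span $(Y, Y \to X, Y \to S)$ carries $C_{\tau(T_C)}$ to $C'_{\tau(T_{C'})}$ for the matching component $C'$; this holds because such an automorphism commutes with the maps to $S$, hence preserves $T_C$ and the fiber decomposition, hence preserves the distinguished fiber. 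Once that is nailed down, the remaining verifications are formal. An alternative, possibly cleaner route would be to build $r_k$ at the level of the defining disjunctive triple — i.e., produce an actual functor $\F_\F \to \F_\F$ over $S$ implementing the deletion and invoke functoriality of $A^{eff}(-)$ — and I would try that first if the span-level bookkeeping becomes unwieldy.
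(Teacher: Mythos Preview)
Your approach differs from the paper's in two respects. First, the paper does not delete a fiber from each offending component; it simply deletes the entire component. Its retraction sends a span $X \leftarrow Y \to S$ to $X \leftarrow Y' \to S$ where $Y'$ is the sub-tuple of those components of $Y$ whose image in $S$ has cardinality at most $k-1$, so no auxiliary choice $\tau$ is required. Second, the paper does not verify naturality pointwise; it writes $R_k$ directly as a map of simplicial sets on the right fibration $A^{eff}(\F_\F)_{/S}^{\leq k}$, specifying its effect simplex by simplex.

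There is a genuine gap at your step (4). You argue that the deletion commutes with pullback because ``the image in $S$ of a component of the pullback is contained in the image of the corresponding component upstairs,'' but strict containment is exactly what breaks things: a component with image of cardinality $k$ can pull back to one with image of cardinality $<k$, and your rule then treats the two orders differently. Concretely, take $S = X = \{1,2\}$ as single-component objects of $\F_\F$, $k=2$, $\tau(\{1,2\})=2$, and let $[\id]$ denote the identity span $X \xleftarrow{\id} \{1,2\} \xrightarrow{\id} S$. Your $r_2$ deletes the fiber over $2$, giving $[X \leftarrow \{1\} \hookrightarrow S]$. Now precompose with the morphism $g\colon \{*\} \to X$ in $A^{eff}(\F_\F)$ given by the span $\{*\} \leftarrow \{*\} \xrightarrow{*\mapsto 2} X$: one order gives $g^* r_2([\id]) = [\{*\} \leftarrow \emptyset \to S]$, while the other gives $r_2(g^*[\id]) = r_2\bigl([\{*\} \leftarrow \{2\} \hookrightarrow S]\bigr) = [\{*\} \leftarrow \{2\} \hookrightarrow S]$, since this span already has image of cardinality $1$. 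These disagree. The same example obstructs the ``delete the whole component'' rule when read as a pointwise natural transformation, so the difficulty is not peculiar to your fiber-deletion scheme: any recipe that decides what to do by inspecting only the image in $S$ of each component of $Y$ will fail naturality for this reason. You were right to flag step (4) as the crux; the argument you sketch for it does not go through, and the alternative you float at the end---producing a functor $\F_\F \to \F_\F$ over $S$---runs into the same obstruction.
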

\begin{proof}
	This is easiest to see at the level of right fibrations. The right fibration corresponding to $h_S$ is the overcategory $A^{eff}(\F_\F)_{/S}$, whose $n$-simplices are diagrams
	\[ \delta : \twa(\Delta^{n + 1})^{\op} \to \F_\F, \]
	where $\twa$ denotes the twisted arrow category (see \cite[Example 2.6]{Bar14}), such that all squares are pullbacks and $\delta(n \to n) = S$. The right fibration corresponding to $h_S^{\leq k}$ is the simplicial subset 
	\[A^{eff}(\F_\F)_{/S}^{\leq k} \inc A^{eff}(\F_\F)_{/S}\]  whose $n$-simplices $\delta$ satisfy the additional condition that the image in $S$ of each component of $\delta((n - 1) \to n)$ has cardinality $\leq k$. $J_k$ is the natural inclusion
	\[ A^{eff}(\F_\F)_{/S}^{\leq k - 1} \inc A^{eff}(\F_\F)_{/S}^{\leq k},  \]
	and the retraction, $R_k$, takes an $n$-simplex $\sigma$ of $A^{eff}(\F_\F)_{/S}^{\leq k}$ to $R_k(\sigma)$, where
	\begin{itemize}
		\item the restriction of $R_k(\sigma)$ to $\twa(\Delta^{[0, \cdots, n- 1]})^\op \inc \twa(\D^n)^{\op}$ is identical to that of $\sigma$, and
		\item for each $m$ with $0 \leq m \leq n-1$, $R_k(\sigma)(m \to n)$ is the subobject of components of $\sigma(m \to n)$ whose image in $S$ has cardinality at most $k - 1$.
	\end{itemize}
	It's easy to see that $R_k(\sigma)$ really is an $n$-simplex of $A^{eff}(\F_\F)_{/S}^{\leq k - 1}$ - that is, squares remain pullbacks - and that $R_k$ is left inverse to $J_k$.
\end{proof}
\begin{lem} \label{lem:cofJk}
	The cofiber of $J_k$ can be identified with 
	\[ \bigoplus_{U \inc S, \, |U| = k} \phi^* \hbar_U \]
	where $\hbar_U \in \Psh^\oplus(A^{eff}(\F_{\Fs}))$ is the presheaf represented by $U$.
\end{lem}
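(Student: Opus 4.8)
The plan is to compute the cofiber $\mathrm{cof}(J_k)$ in $\Psh^\oplus(A^{eff}(\F_\F))$ by first identifying the cofiber as an \emph{additive presheaf}, i.e.\ pointwise, and then recognizing the result as the stated direct sum of pushforwards along $\phi$. Since $J_k : h_S^{\leq k-1} \to h_S^{\leq k}$ is a split monomorphism of additive presheaves by the previous lemma, its cofiber — which is the same as its pointwise cofiber, since cofibers in functor categories are computed pointwise and split monos have honest "complements" once we pass to the semiadditive-plus-cofibers setting $\Psh^\oplus$ — is represented on each $X \in A^{eff}(\F_\F)$ by the quotient space $h_S^{\leq k}(X)/h_S^{\leq k-1}(X)$. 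Concretely, using the twisted-arrow description of $A^{eff}(\F_\F)_{/S}$, the value $\mathrm{cof}(J_k)(X)$ is the space of spans $X \leftarrow Y \to S$ in which every component of $Y$ has image in $S$ of cardinality $\leq k$, modulo those for which every component has image of cardinality $\leq k-1$.

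Next I would decompose this quotient according to which components of $Y$ land on which $k$-element subsets of $S$. Given such a span, the retraction $R_k$ from the previous lemma strips off exactly the components of $Y$ whose image has cardinality $\leq k-1$; what survives in the cofiber is the part of $Y$, call it $Y'$, all of whose components surject onto a $k$-element subset of $S$. Thus $Y'$ is naturally a disjoint union $\coprod_{U \subseteq S,\, |U|=k} Y'_U$ where each $Y'_U$ maps by a surjection onto $U$ — that is, $Y'_U$ is an object of $\F_{\Fs}$ sitting over $U$, together with a map $Y'_U \to X$. Since $\mathrm{cof}(J_k)$ is an additive presheaf and $Y' = \coprod_U Y'_U$, the mapping space splits as a product over $U$, which on the level of presheaves gives a direct sum decomposition $\mathrm{cof}(J_k) \simeq \bigoplus_{|U|=k} \mathcal{C}_U$, where $\mathcal{C}_U(X)$ is the space of spans $X \leftarrow Y'_U \to U$ with $Y'_U \twoheadrightarrow U$ a surjection and all squares cartesian.

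The final step is to identify $\mathcal{C}_U$ with $\phi^* \hbar_U$. By definition $\hbar_U$ is represented by $U$ in $\Psh^\oplus(A^{eff}(\F_{\Fs}))$, so $(\phi^*\hbar_U)(X) = \hbar_U(\phi X) = \Map_{A^{eff}(\F_{\Fs})}(\phi X, U)$, the space of spans $\phi X \leftarrow W \to U$ in $\F_{\Fs}$ with the left leg cartesian. Using the explicit formula $\phi'(S) = \coprod_{U' \subseteq S} U'$ from Definition \ref{def:phi} and its extension $\phi$ to $\F_\F$, together with the adjunction $U \dashv \phi$, I would unwind the description of $\phi X$ and match the data of a span into $U$ over $\phi X$ with the data $(Y'_U \to X,\, Y'_U \twoheadrightarrow U)$ appearing in $\mathcal{C}_U$; the cartesian condition on the left leg in $A^{eff}(\F_{\Fs})$ translates exactly into the pullback condition in the twisted-arrow diagrams, using Lemma \ref{lem:surjpbs} and the fact that $\phi$ preserves pullbacks. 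I expect the main obstacle to be this last matching: keeping precise track of how components of the apex distribute over the subsets of $S$ indexing $\phi X$, and verifying naturality of the identification in $X$ (i.e.\ compatibility with spans, not just with honest maps), so that $\mathcal{C}_U \simeq \phi^*\hbar_U$ holds as additive presheaves and not merely objectwise.
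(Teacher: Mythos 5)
Your plan is essentially the paper's proof: both arguments work pointwise in commutative monoids, identifying $\cof(J_k)(X)$ with the space of spans $X \leftarrow Y \to S$ in which every component of $Y$ has image in $S$ of cardinality exactly $k$, and then matching this with $\bigoplus_{U \inc S,\, |U| = k} \phi^* \hbar_U$. The one step you flag as the main obstacle --- producing that identification \emph{naturally in $X$}, i.e.\ compatibly with spans and not just with honest maps --- is exactly where the paper is slicker: rather than matching the two sides by hand, it first records the easy computation $\phi_! h_S \simeq \bigoplus_{U \inc S} \hbar_U$, then obtains the comparison map $L : h_S^{\leq k} \to \bigoplus_{U \inc S,\, |U| = k} \phi^* \hbar_U$ by projecting onto the summands with $|U| = k$ and adjointing over along $\phi_! \dashv \phi^*$ (and restricting along $h_S^{\leq k} \to h_S$). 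Naturality then comes for free, and all that remains is the pointwise verification that $J_k$ followed by $L$ is a cofiber sequence of commutative monoids, which is precisely your explicit description of the quotient. I'd recommend adopting this adjunction trick, since it converts the part of your argument you rightly expect to be fiddly into a formality. One small correction: a morphism $\phi(X) \to U$ in $A^{eff}(\F_{\Fs})$ is an arbitrary span (Lemma \ref{lem:surjpbs} supplies the pullbacks needed for composition); there is no cartesianness condition on the left leg.
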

\begin{proof}
	Clearly
	\[ \phi_! h_S \simeq \bigoplus_{U \inc S} \hbar_U. \]
	Composing with the projection to $\bigoplus_{U \inc S, \, |U| = k + 1} \hbar_U$ and adjointing over, we get a map
	\[ h_S \to \bigoplus_{U \inc S, \, |U| = k + 1} \phi^* \hbar_U \]
	Composing further with the inclusion $h_S^{\leq k + 1} \to h_S$ gives the required map 
	\[ L : h_S^{\leq k + 1} \to \bigoplus_{U \inc S \, |U| = k + 1} \phi^* \hbar_U. \]
	At the level of objects, $L$ and $J_{k + 1}$ give the cofiber sequence of commutative monoids
	\[ h_S^{\leq k}(X) \to h_S^{\leq k + 1}(X) \to \] \[  \left \{ \begin{tikzcd} [column sep = small] &Y \ar{dr}{f}  \ar{dl} \\ X && S \end{tikzcd} \, \middle| \, |f(Y')| = k \text{ for every component } Y' \text{ of } Y \right\}. \]
\end{proof}
\begin{lem}
	For each $U \in F_{\Fs}$, the counit map
	\[ \eps_U : \phi_! \phi^* U \to U \]
	is an equivalence. Therefore, extending by colimits, the counit map
	\[ \eps : \phi_! \phi^* \to \id_{A^{eff}(\F_{\Fs})} \]
	is an equivalence of functors.
\end{lem}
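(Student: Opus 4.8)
The plan is to reduce to representables and induct on cardinality, building on the two preceding lemmas. Since $\phi_!$ is a left adjoint and $\phi^*$, being restriction along $\phi$, preserves all colimits, the endofunctor $\phi_!\phi^*$ of $\Psh^\oplus(A^{eff}(\F_{\Fs}))$ preserves colimits; hence the objects $Q$ for which $\eps_Q$ is an equivalence span a full subcategory closed under colimits, and as the representables generate under colimits it suffices to treat $Q = \hbar_U$ with $U$ a representable. Any object of $\F_{\Fs}$ being a direct sum of finite sets, and $\phi_!\phi^*$ and $\id$ commuting with direct sums, we may take $U$ to be a single finite set; write $m = |U|$ and induct on $m$, the base case being elementary.

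For the step, the two preceding lemmas furnish $h_U \in \Psh^\oplus(A^{eff}(\F_{\F}))$ with a finite filtration $h_U^{\leq 0} \inc \cdots \inc h_U^{\leq m} = h_U$ by split monomorphisms whose $k$-th successive cofiber is $\bigoplus_{V \inc U,\, |V| = k}\phi^*\hbar_V$, while $\phi_! h_U \simeq \bigoplus_{V \inc U}\hbar_V$. Applying the colimit-preserving functor $\phi_!$ to the topmost cofiber sequence — in which $U$ is its own unique subset of cardinality $m$ — gives a cofiber sequence
\[ \phi_! h_U^{\leq m-1} \os{\phi_! J_m}{\longrightarrow} \bigoplus_{V \inc U}\hbar_V \os{\phi_! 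L_m}{\longrightarrow} \phi_!\phi^*\hbar_U \]
with $\phi_! J_m$ a split monomorphism. Unwinding the construction of $L$ in the proof of Lemma \ref{lem:cofJk} — $L_m$ is $\phi^*$ of the projection $p : \phi_! h_U \simeq \bigoplus_{V \inc U}\hbar_V \to \hbar_U$ onto the top summand, precomposed with the unit $h_U \to \phi^*\phi_! h_U$ — the triangle identities and naturality of $\eps$ yield the identity $\eps_{\hbar_U} \circ \phi_! L_m = p$.

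The crux is to identify $\phi_! J_m$, as a morphism with target $\bigoplus_{V \inc U}\hbar_V$, with the canonical inclusion of the subsum $\bigoplus_{V \subsetneq U}\hbar_V$. Granting this, $\phi_! L_m$ is the cofiber of a direct-summand inclusion, hence identifies $\phi_!\phi^*\hbar_U$ with the complementary summand $\hbar_U$ via an equivalence $\psi$ carrying $\phi_! L_m$ to the projection $p$. If $j : \hbar_U \inc \bigoplus_{V \inc U}\hbar_V$ is the top summand inclusion, so $p \circ j = \id$, then $\phi_! L_m \circ j$ satisfies both $\psi \circ (\phi_! L_m \circ j) = \id$ and $\eps_{\hbar_U} \circ (\phi_! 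L_m \circ j) = \id$; being a section of the equivalence $\psi$ it is an equivalence, so $\eps_{\hbar_U}$ is an equivalence as well. The identification I would prove by transporting the whole filtration of $h_U$ through $\phi_!$: one first checks, using that $L_{k'}$ vanishes on $h_U^{\leq k}$ for $k' > k$ together with the analogue at level $k'$ of the identity above, that $\phi_!(h_U^{\leq k} \inc h_U)$ is null after projection onto every summand $\hbar_V$ with $|V| > k$, so that it factors through $\bigoplus_{V \inc U,\, |V| \leq k}\hbar_V$; an induction on $k$ anchored at the bottom — where $h_U^{\leq 0}$ is visibly representable by the empty set and the factored morphism is manifestly a summand inclusion — then identifies the factored morphism with the canonical subsum inclusion, using the cardinality hypothesis to identify the intervening graded pieces $\bigoplus_{|V| = k}\phi_!\phi^*\hbar_V$ with $\bigoplus_{|V| = k}\hbar_V$ via $\eps$. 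The final sentence of the statement is then immediate from colimit-preservation of $\phi_!\phi^*$.

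The step I expect to be the main obstacle is exactly this identification of the $\phi_!$-image of the filtration of $h_U$ with the evident filtration of $\bigoplus_{V \inc U}\hbar_V$ — which is the pathology motivating the whole section: in a merely semiadditive $\iy$-category a split monomorphism need not split off its cofiber as a complementary summand, so the decomposition is not available formally and one must track the split monomorphisms $\phi_! J_k$ and their interaction with the cofiber sequences of Lemma \ref{lem:cofJk} by hand, perhaps by exhibiting the complementary idempotents directly from the combinatorics of subsets.
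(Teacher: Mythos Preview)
Your outline is sound and would eventually work, but the paper takes a shorter path that avoids exactly the obstacle you flag in your last paragraph. Rather than inducting on $|U|$ and tracking the entire filtration $h_U^{\leq 0} \inc \cdots \inc h_U^{\leq m}$ through $\phi_!$, the paper observes that the penultimate stage has a direct colimit presentation
\[ h_U^{\leq m-1} \simeq \colim{V \in \mb{Cube}^U \setminus \{U\}} h_V \]
over the punctured cube of proper subsets. Since $\phi_!$ preserves colimits and $\phi_! h_V \simeq \bigoplus_{W \inc V} \hbar_W$ on representables, one computes
\[ \phi_!(h_U^{\leq m-1}) \simeq \colim{V \subsetneq U} \bigoplus_{W \inc V} \hbar_W \simeq \bigoplus_{W \subsetneq U} \hbar_W \]
in one step: the transition maps in this colimit diagram are just summand inclusions, and for each fixed $W \subsetneq U$ the subposet $\{V : W \inc V \subsetneq U\}$ over which $\hbar_W$ appears has initial object $W$. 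This immediately identifies $\phi_! J_m$ with the canonical subsum inclusion and its cofiber with $\hbar_U$ via the projection; comparing with the identification of the same cofiber as $\phi_!\phi^*\hbar_U$ from Lemma~\ref{lem:cofJk} finishes the argument. No induction is needed.

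The cube-colimit trick sidesteps both your induction and the semiadditive pathology you are worried about: you never have to split off a complementary summand, because you compute $\phi_!(h_U^{\leq m-1})$ as a colimit of objects whose $\phi_!$-images you already know explicitly, rather than as a retract of $\phi_! h_U$. Your careful verification that the resulting equivalence is actually the counit (via the triangle identity applied to $L_m$) is a point the paper leaves implicit and would be worth retaining.
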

\begin{proof}
	Let $U$ be a finite set of cardinality $n$. It's then clear that
	\[h_U^{\leq n - 1} \simeq \colim{V \in \mb{Cube}^U \setminus \{U\}} h_V,\]
	where $\mb{Cube}^U$ is the poset of subsets of $U$. Thus we have
	\[ \phi_! (h_U^{\leq n - 1}) \simeq \colim{V \in \mb{Cube}^U \setminus \{U\}} \left( \bigoplus_{W \inc V} \hbar_W \right) \simeq \bigoplus_{V \subsetneq U} \hbar_V, \]
	and a cofiber sequence
	\[ \phi_! (h_U^{\leq n - 1}) \os{\phi_! J_n} \to \phi_! h_U \to \hbar_U, \]
	where the right hand map is the projection 
	\[ \phi_! h_U \simeq \bigoplus_{V \inc U} \hbar_V \to \hbar_U. \]
	But applying $\phi_!$ to the cofiber sequence obtained in Lemma \ref{lem:cofJk} shows that the cofiber of $\phi_! J_n$ can be identified with $\phi_! \phi^* \hbar_U$. This gives the result.
\end{proof}
We deduce from this that $\phi^*$ is fully faithful. This is the backdrop to the major result of this section: that $A^{eff}(\F_{\Fs})$ is the localization of $A^{eff}(\F_\F)$ into ``complementable semiadditive categories".
\begin{dfn}
	A semiadditive category $\mb{C}$ is called \emph{complementable} if whenever $f : X \to Y$ is a morphism in $\mb{C}$ admitting a retraction $r : Y \to X$, the cofiber $g : Y \to Z$ of $f$ exists, and the diagram
	\[ \begin{tikzcd} Y \ar{r}{g} \ar{d}{r} & Z \\ X \end{tikzcd} \]
	is a product diagram.
\end{dfn}
\begin{exa}
	Any idempotent-complete additive category $\mb{C}$ is complementable.
\end{exa}
\begin{proof}
	The homotopy category $\ho(\mb{C})$ is an idempotent complete additive category, and we may define $Z \in \ho(\mb{C})$ to be the retract of $Y$ corresponding to the idempotent $\id - fr$. Then there is a product diagram 
	\[ \begin{tikzcd} Y \ar{r}{g} \ar{d}{r} & Z \\ X \end{tikzcd} \]
	in $\ho(\mb{C})$, which lifts to a product diagram in $\mb{C}$. It remains to show that $X \os{f} \to Y \os{g} \to Z$ is a cofiber sequence in $\mb{C}$. This follows from the fact that $f$ is homotopic to the summand inclusion $X \to X \oplus Z \simeq Y$, and $g$ is homotopic to the projection $Y \simeq X \oplus Z \to Z$.
\end{proof}
\begin{lem}
	Let $\mb{C}$ be a complementable semiadditive category which is presentable and let $\psi : A^{eff}(\F_\F) \to \mb{C}$ be an additive functor. Abusively denoting the colimit-preserving extension
	\[ \psi : \Psh^\oplus(A^{eff}(\F_\F)) \to \mb{C}, \]
	we claim that for each finite set $S$, $\psi$ takes the map
	\[ B_S :  h_S \to \bigoplus_{U \inc S} \phi^* \hbar_U \]
	to an equivalence.
\end{lem}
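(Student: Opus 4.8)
The plan is to decompose the map $B_S$ using the filtration $h_S^{\leq 0} \to h_S^{\leq 1} \to \cdots \to h_S^{\leq |S|} = h_S$, show that $\psi$ carries each graded piece to the ``correct'' summand of the target, and then assemble these. First I would reduce to an inductive claim: for each $k$ with $0 \leq k \leq |S|$, the composite
\[ h_S^{\leq k} \inc h_S \os{B_S} \to \bigoplus_{U \inc S} \phi^* \hbar_U \to \bigoplus_{U \inc S,\, |U| \leq k} \phi^* \hbar_U \]
is carried by $\psi$ to an equivalence, where the last map is the evident projection. The case $k = |S|$ is exactly the statement we want, and $k = 0$ is clear since $h_S^{\leq 0} \simeq h_\emptyset \simeq \phi^*\hbar_\emptyset$ (the empty set is terminal, so both sides are the unit).

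For the inductive step, the key input is the cofiber sequence of the previous two lemmas: by the retraction lemma, $J_k : h_S^{\leq k-1} \to h_S^{\leq k}$ is a split monomorphism, and by Lemma~\ref{lem:cofJk} its cofiber is $\bigoplus_{U \inc S,\, |U| = k} \phi^* \hbar_U$. Now here is where complementability enters: applying $\psi$ to the split monomorphism $J_k$ and using that $\mb{C}$ is complementable, the diagram
\[ \begin{tikzcd} \psi(h_S^{\leq k}) \ar{r} \ar{d}{\psi(R_k)} & \bigoplus_{U \inc S,\, |U| = k} \psi(\phi^* \hbar_U) \\ \psi(h_S^{\leq k-1}) \end{tikzcd} \]
is simultaneously a cofiber diagram and a product diagram — that is, $\psi(h_S^{\leq k})$ splits as the direct sum $\psi(h_S^{\leq k-1}) \oplus \bigoplus_{|U|=k} \psi(\phi^*\hbar_U)$, compatibly with the retraction $R_k$ and the cofiber map $L$. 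I would then check that under this splitting, the composite displayed above for parameter $k$ is, on the first summand, the composite for parameter $k-1$ (which is an equivalence by induction), and on the second summand the identity. The compatibility of the maps $L$ (from Lemma~\ref{lem:cofJk}) with the maps $B_S$ — both are built by adjointing the projection of $\phi_! h_S \simeq \bigoplus_{U \inc S} \hbar_U$ onto a sub-collection of summands — is what makes this bookkeeping go through.

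The main obstacle is precisely this last compatibility: one must verify that the direct-sum decomposition of $\psi(h_S^{\leq k})$ produced abstractly by complementability actually matches the decomposition one would write down ``by hand'' via the maps $L$ and the projections defining $B_S$. In other words, complementability guarantees a splitting exists, but one needs it to be the \emph{expected} splitting in order to run the induction and identify the composite on each summand. I expect this to require carefully tracing the adjunction $(\phi_!, \phi^*)$ and the explicit simplicial formulas for $J_k$, $R_k$ and $L$ from the preceding lemmas, rather than any new idea; once the square above is known to be a product square with the cofiber map being $L$, the rest is a diagram chase. A secondary, more minor point is justifying that $\psi$, being colimit-preserving, commutes with the finite direct sums and with forming the cofiber of $J_k$, so that the cofiber sequences of the preceding lemmas survive application of $\psi$ — this is immediate since cofibers are colimits.
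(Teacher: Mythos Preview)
Your approach is essentially identical to the paper's, which is in fact terser: the paper simply states the same induction on $k$, notes that $B_S^{\leq 0}$ is already an equivalence, and says that since $\psi$ takes the $(L, R_k)$ diagram to a product diagram, ``the conclusion follows.'' The compatibility you flag as the main obstacle reduces to the identity $\text{pr}_{\leq k-1} \circ B_S^{\leq k} \simeq B_S^{\leq k-1} \circ R_k$ already in $\Psh^\oplus(A^{eff}(\F_\F))$, which holds by inspection of the explicit formula for $R_k$ (both sides discard components with image of size $k$ and apply the unit to the rest), so no new idea is needed and the paper simply omits this check.
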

\begin{proof}
	We'll prove simply by induction on $k$ with $0 \leq k \leq |S|$ that $\psi(B_S^{\leq k})$ is an equivalence, where 
	\[ B_S^{\leq k} : h_S^{\leq k} \to \bigoplus_{U \inc S, \, |U| \leq k} \hbar_U \]
	is the natural map. Indeed, $B_S^{\leq 0}$ is already an equivalence, and since $\psi$ must take the diagram
	\[ \begin{tikzcd} h_S^{\leq k} \ar{r}{L} \ar{d}{R_k} & \bigoplus_{U \inc S, \, |U| = k} \hbar_U \\ h_S^{\leq k - 1} \end{tikzcd} \]
	to a product diagram, the conclusion follows.
\end{proof}
Since $\phi_! B_S$ is also an equivalence, it follows that any object in the essential image of $\phi^*$ is local for the collection of morphisms $ \mb{B} := \{B_S \, | \, S \text{ a finite set}\}.$ In particular, $B_S$ exhibits $\bigoplus_{U \inc S} \phi^*\hbar_U$ as the $\mb{B}$-localization of $h_S$. Conversely, since $\mb{B}$-localization preserves colimits, all $\mb{B}$-local objects belong to $\im(\phi^*)$. We deduce:
\begin{thm}\label{thm:phi}
	$\phi_!$ exhibits $\Psh^{\oplus}(A^{eff}(F_{\Fs}))$ as the $\mb{B}$-localization of \newline $\Psh^{\oplus}(A^{eff}(\F_\F))$. As a consequence, if $\mb{C}$ is a presentable complementable semiadditive category, the restriction
	\[ \phi^* : \Fun^{\oplus}(A^{eff}(\F_{\Fs}), \mb{C}) \to \Fun^{\oplus}(A^{eff}(\F_\F), \mb{C}) \]
	is an equivalence of categories. This holds in particular if $\mb{C}$ is a presentable (thus idempotent complete) additive category.
\end{thm}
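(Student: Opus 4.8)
The plan is to deduce Theorem \ref{thm:phi} from the accumulated structural results of the section: the full faithfulness of $\phi^*$, the identification of the cofibers of the filtration maps $J_k$ in terms of $\phi^*$-representables (Lemma \ref{lem:cofJk}), the existence of retractions $R_k$ for the $J_k$, and the fact that a presentable complementable semiadditive category inverts each $B_S$. First I would fix notation: let $W : \Psh^\oplus(A^{eff}(\F_\F)) \to \Psh^\oplus(A^{eff}(\F_{\Fs}))$ abbreviate $\phi_!$ and note that $\phi^* = W$-right-adjoint is fully faithful by the preceding lemma (the counit $\eps : \phi_!\phi^* \to \id$ is an equivalence). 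A colimit-preserving functor whose right adjoint is fully faithful is a localization onto the essential image of that right adjoint, so it suffices to prove that $\im(\phi^*)$ is precisely the subcategory of $\mb{B}$-local objects, where $\mb{B} = \{B_S\}$; the present section already proves one containment, namely every $\phi^*$-object is $\mb{B}$-local, because $\phi_! B_S$ is an equivalence (each $B_S$ becomes an equivalence after $\phi_!$ by the cofiber-sequence argument via Lemma \ref{lem:cofJk}), and by adjunction an object $Y$ is $B_S$-local iff $\Map(\phi_! B_S, Y)$ — wait, more precisely: $\phi^* Z$ is $B_S$-local iff $\Map(h_S, \phi^* Z) \to \Map(\bigoplus_U \phi^*\hbar_U, \phi^* Z)$ is an equivalence, which by full faithfulness of $\phi^*$ and the adjunction $(\phi_!, \phi^*)$ translates to $\Map(\phi_! h_S, Z) \to \Map(\phi_! \bigoplus_U \hbar_U, Z)$, an equivalence since $\phi_! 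B_S$ is.

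Next I would establish the reverse containment: every $\mb{B}$-local object lies in $\im(\phi^*)$. Since $\mb{B}$-localization $L_{\mb{B}}$ is a colimit-preserving functor and the representables $h_S$ generate $\Psh^\oplus(A^{eff}(\F_\F))$ under colimits, it is enough to know that $L_{\mb{B}} h_S$ lies in $\im(\phi^*)$ for every finite set $S$ — and indeed $B_S : h_S \to \bigoplus_{U \inc S} \phi^* \hbar_U$ exhibits the target (an object of $\im(\phi^*)$, since $\im(\phi^*)$ is closed under the finite direct sums appearing here) as the $\mb{B}$-localization of $h_S$. To see that $B_S$ is a $\mb{B}$-localization I would argue: the target is $\mb{B}$-local by the previous paragraph, and $B_S$ itself belongs to the saturated class generated by $\mb{B}$ — in fact $B_S \in \mb{B}$ by definition. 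Hence every $h_S$ has its localization in $\im(\phi^*)$; since $\im(\phi^*)$ is closed under colimits in the localized category (it is a localization, being reflective), and the $h_S$ generate under colimits, every $\mb{B}$-local object is a colimit of objects of $\im(\phi^*)$ computed in the localization, hence lies in $\im(\phi^*)$. Therefore $\phi_!$ is the $\mb{B}$-localization functor.

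For the second assertion, I would use the standard equivalence between colimit-preserving functors out of a localization $L_{\mb{B}}\Psh^\oplus(A^{eff}(\F_\F)) \simeq \Psh^\oplus(A^{eff}(\F_{\Fs}))$ into a presentable target $\mb{C}$ and those colimit-preserving functors out of $\Psh^\oplus(A^{eff}(\F_\F))$ that invert $\mb{B}$. Combined with the universal property of the nonabelian derived category (Recollection \ref{rec:nonabder}) — $\Psh^\oplus(A^{eff}(-))$ is the free presentable semiadditive category, so $\Fun^L(\Psh^\oplus(A^{eff}(\F_\F)), \mb{C}) \simeq \Fun^\oplus(A^{eff}(\F_\F), \mb{C})$ and similarly over $\F_{\Fs}$ — this yields
\[ \Fun^\oplus(A^{eff}(\F_{\Fs}), \mb{C}) \simeq \Fun^L_{\mb{B}\text{-loc}}(\Psh^\oplus(A^{eff}(\F_\F)), \mb{C}) \hookrightarrow \Fun^\oplus(A^{eff}(\F_\F), \mb{C}), \]
and the map is restriction along $\phi$. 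It remains to see that the essential image of this inclusion is everything when $\mb{C}$ is complementable, i.e. every additive $\psi : A^{eff}(\F_\F) \to \mb{C}$ inverts $\mb{B}$ — but that is exactly the content of the Lemma immediately preceding the theorem, which shows $\psi(B_S)$ is an equivalence for every $S$ by induction on the filtration, using that $\psi$ carries the retraction square for $J_k$ to a product diagram (this is where complementability enters). The idempotent-complete additive case follows since such categories are complementable by the Example above.

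The main obstacle I anticipate is not any single step but rather getting the localization bookkeeping airtight: specifically, verifying carefully that the two adjoint equivalences — ``reflective localization $\leftrightarrow$ local objects" and ``functors out of a localization $\leftrightarrow$ functors inverting the class" — are being applied with the correct presentability and colimit-preservation hypotheses, and that $\im(\phi^*)$ genuinely is closed under the relevant colimits (which it is, as the image of a fully faithful right adjoint, hence reflective). The geometric input — that $B_S$ is built from the filtration $h_S^{\leq k}$ whose graded pieces are $\phi^*$-representables and whose maps split — has already been done in the lemmas, so the theorem is essentially a formal consequence, and I would keep the proof short, citing \cite[Proposition 5.5.4.20]{HTT} or the analogous statement for the two standard facts about localizations.
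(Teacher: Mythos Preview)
Your proposal is correct and follows essentially the same route as the paper's proof: identify $\im(\phi^*)$ with the $\mb{B}$-local objects using that $\phi_! B_S$ is an equivalence for one inclusion and that $B_S$ exhibits $\bigoplus_{U\subseteq S}\phi^*\hbar_U$ as the $\mb{B}$-localization of $h_S$ for the other, then deduce the statement about $\Fun^\oplus$ from the commutative square relating $\Fun^L(\Psh^\oplus(-),\mb{C})$ and $\Fun^\oplus(-,\mb{C})$ together with the lemma that complementable targets invert $\mb{B}$. The paper's write-up is terser but the logical skeleton is identical; your added remarks about localization bookkeeping and closure of $\im(\phi^*)$ under colimits are just making explicit what the paper leaves implicit.
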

\begin{proof}
	At this point, only the second sentence requires proof; but it follows immediately from the diagram
	\[ \begin{tikzcd} \Fun^L(\Psh^{\oplus}(A^{eff}(\F_{\Fs})), \mb{C}) \ar{r}[above]{\phi^*}[below]{\sim} \ar{d}{\sim} & \Fun^L(\Psh^\oplus(A^{eff}(\F_\F)), \mb{C}) \ar{d}{\sim} \\ \Fun^{\oplus}(A^{eff}(\F_{\Fs}), \mb{C}) \ar{r}{\phi^*} & \Fun^{\oplus}(A^{eff}(\F_\F), \mb{C}), \end{tikzcd} \]
	in which the other three maps are equivalences.
\end{proof}
\begin{rem}
	We haven't actually proved that $A^{eff}(\F_{\Fs})$ is complementable - we only need the fact that it's constructed by adjoining \emph{some} complements to $A^{eff}(\F_\F)$ - but we strongly suspect that it is.
\end{rem}

\section{Calculus and the Burnside category}\label{sec:calcburn}
In this section we'll assume that the reader is familiar with the basic definitions and results of Goodwillie's functor calculus as laid out, for instance, in \cite{Goo03}, the original and still best source, or in the notes from the 2012 Talbot workshop, or in \cite[\S 6.1]{HA}. 

Let $\mb{E}$ be a stable $\iy$-category. We'd like to progressively reduce the theory of polynomial functors $\Sp^\omega \to E$ to something more combinatorial. Our first lemma in this direction asserts that polynomial functors on the category of connective finite spectra extend uniquely, in the strongest possible way, to all finite spectra. This result is originally due to Lukas Brantner, and the line of argument is significantly indebted to a conversation between us.

\begin{prop}[Brantner]\label{prop:connsp}
	Let $\iota : \Sp^\omega_{\geq 0} \to \Sp^\omega$ be the inclusion. For each $n$, the restriction functor
	\[ \iota^*_n: \Fun^{n-exc}(\Sp^\omega, \mb{E}) \to \Fun^{n-exc}(\Sp^\omega_{\geq 0}, \mb{C}) \]
	is an equivalence of categories.
\end{prop}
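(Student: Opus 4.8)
The plan is to show that $\iota^*_n$ is both essentially surjective and fully faithful by exhibiting an explicit inverse given by left Kan extension along $\iota$, and showing that this left Kan extension, when applied to an $n$-excisive functor, lands in $n$-excisive functors and that the unit and counit of the Kan extension adjunction are equivalences on the relevant subcategories. Concretely, write $\iota_! : \Fun(\Sp^\omega_{\geq 0}, \mb{E}) \to \Fun(\Sp^\omega, \mb{E})$ for left Kan extension (using that $\mb{E}$, being stable, admits the relevant colimits, or passing to the presentable enlargement and restricting). Since $\iota$ is fully faithful, the unit $\id \to \iota^* \iota_!$ is automatically an equivalence, so it suffices to prove two things: (i) if $F : \Sp^\omega_{\geq 0} \to \mb{E}$ is $n$-excisive, then $\iota_! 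F$ is $n$-excisive; and (ii) if $G : \Sp^\omega \to \mb{E}$ is $n$-excisive, then the counit $\iota_! \iota^* G \to G$ is an equivalence.

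The key tool for both points is that every finite spectrum is, up to a finite colimit built out of pushouts and shifts, obtained from connective finite spectra: concretely, for $X \in \Sp^\omega$ there is some $k$ with $\Sigma^k X$ connective, and $X$ is a finite (de)suspension away from the connective part; more usefully, $\Sp^\omega$ is generated from $\Sp^\omega_{\geq 0}$ under finite colimits and desuspension. For step (ii), I would argue that $G$, being $n$-excisive, is determined by its restriction to connective finite spectra because any $X$ sits in a strongly cocartesian cube whose other vertices are connective (for instance, desuspend: $X \simeq \mathrm{cofib}(\Sigma^{-1}X \to 0)$ is not yet connective, but iterating and using that $n$-excisive functors send strongly cocartesian $(n+1)$-cubes to cartesian ones lets one express $G(X)$ as a finite limit of values $G$ on connective spectra). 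Then a formal argument identifies this limit expression with $(\iota_!\iota^* G)(X)$, since $\iota_! \iota^* G$ is computed by the same colimit/limit recipe on connective cells. For step (i), the cleanest route is to show directly that $\iota_! F$ satisfies the $n$-excision condition by checking it on strongly cocartesian $(n+1)$-cubes in $\Sp^\omega$, reducing each such cube — again using desuspension and the cellular filtration — to cubes landing in $\Sp^\omega_{\geq 0}$, where $F$'s $n$-excision applies, and using that $\iota_!$ is exact (being left Kan extension into a stable category along a functor between small categories, it preserves finite limits here because it is computed by finite colimits that commute with the finite limits in question — this is where stability of $\mb{E}$ is essential).

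I expect the main obstacle to be step (i): controlling left Kan extension well enough to see that it preserves $n$-excisiveness. The difficulty is that $\iota_! F(X)$ is a priori an infinite colimit over the comma category $(\Sp^\omega_{\geq 0})_{/X}$, and one must show this colimit is "finite" in the relevant sense — e.g., that it is computed by a cofinal finite diagram, or equivalently that $\iota$ is "$n$-excisively cofinal" in an appropriate sense. The resolution should come from the observation that desuspension $\Omega : \Sp^\omega \to \Sp^\omega$ shifts connectivity, so $(\Sp^\omega_{\geq 0})_{/X}$ is built from a tower of connective approximations, and $n$-excisive functors are insensitive to passing high enough up this tower — precisely because $P_n$ commutes with the filtered colimit presenting $X$ from its pieces and the cross-effect vanishing bounds the contribution of high cells. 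Making this precise — perhaps by first reducing to the universal case $\mb{E} = \Fun^{n-exc}(\Sp^\omega, \Top_*)$ or to spectra, then using the classification of $n$-excisive functors via derivatives and the fact that the $i$-th derivative for $i \leq n$ is determined by connective data — is the technical heart. Once $\iota_! F$ is known to be $n$-excisive, one also gets for free that $\iota_!$ and $P_n \circ \iota_!$ agree on $n$-excisive inputs, which closes the loop with step (ii).
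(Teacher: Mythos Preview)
Your proposal has a genuine gap in step (i), and it also misses a key simplification that would reshape the whole argument.

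First the simplification you overlook: since $\iota$ has a right adjoint, the connective cover $\tau_{\geq 0}$, the comma category $(\Sp^\omega_{\geq 0})_{/X}$ has a terminal object $\tau_{\geq 0}X \to X$, and so $\iota_! F$ is simply $F \circ \tau_{\geq 0}$. There is no infinite colimit to tame and no cofinality issue; your ``main obstacle'' dissolves. But with this identification in hand, your claim (i) becomes visibly false: $\tau_{\geq 0}$ does not preserve strongly cocartesian cubes, so $F \circ \tau_{\geq 0}$ need not be $n$-excisive even when $F$ is. Already for $n = 1$, the cocartesian square $\Sigma^{-1}\bb{S} \to 0, \, 0 \to \bb{S}$ is sent by $\tau_{\geq 0}$ to $0 \to 0, \, 0 \to \bb{S}$, which is not cocartesian; hence $F \circ \tau_{\geq 0}$ is not $1$-excisive for nonzero linear $F$. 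The correct candidate inverse is $P_n \iota_!$, not $\iota_!$, and once you insert $P_n$ the naive unit/counit argument no longer goes through directly: you must show that the unit $F \to \iota^* P_n \iota_! F$ remains an equivalence (i.e.\ that $P_n$ does not disturb values on connectives) and that the counit $P_n \iota_! \iota^* G \to G$ is an equivalence.

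The paper handles this differently. Rather than a direct Kan-extension argument, it runs a recollement induction on $n$: one uses the stratification $\Fun^{(n-1)\text{-}exc} \subseteq \Fun^{n\text{-}exc}$ and checks that $\iota_n^*$ matches the $n$-homogeneous layers (both sides are $\mb{E}^{h\Si_n}$ via cross effects) and sends the right-orthogonal complement into the right-orthogonal complement. The latter reduces to showing $L_n = P_n\iota_!$ carries $(n-1)$-excisive functors to $(n-1)$-excisive functors, which follows from two short lemmas: $P_n$ does not change values on connectives when the restriction is already $n$-excisive, and an $n$-excisive functor vanishing on connectives is zero (by vanishing of all cross effects). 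Your proposed step (ii), incidentally, is morally this last lemma, but you would still need the $P_n$ correction and the first lemma to make it work.
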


By comparison, the functor $\iota^* : \Fun(\Sp^\omega, \mb{E}) \to \Fun(\Sp^\omega_{\geq 0}, \mb{E})$ is very far from even being conservative - as we know well, polynomial functors are very rigid compared to general functors.

\begin{proof}
	We'll use a recollement argument and \cite[Proposition 8]{BGc} using the recollement formed by the reflective and coreflective subcategory
	\[  \Fun^{(n-1)-exc}(\mb{S}, \mb{E}) \inc \Fun^{n-exc}(\mb{S}, \mb{E}) \]
	where $\mb{S}$ is either $\Sp^\omega_{\geq 0}$ or $\Sp^\omega$. Denote $\Fun^{(n-1)-exc}(\Sp^\omega, \mb{E})$ by $\mb{U}$ and \newline $\Fun^{(n-1)-exc}(\Sp_{\geq 0}^{\omega}, \mb{E})$ by $\mb{U}_{\geq 0}$. Denote the categories of objects of $\Fun^{n-exc}(\mb{S}, \mb{E})$ which are left resp. right orthogonal to $\mb{U}_{(\geq 0)}$ by $\mb{Z}^\vee_{(\geq 0)}$ resp. $\mb{Z}^{\wedge}_{(\geq 0)}$. By definition, $\mb{Z}^\vee_{(\geq 0)}$ is the category of $n$-homogeneous functors $\mb{S} \to \mb{E}$. 

	By induction on $n$ and \cite[Proposition 8]{BGc}, to prove the proposition it's enough to show that
	\[ \iota_n^*(\mb{Z}^{\vee}_{\Sp^{\omega}}) \simeq \mb{Z}^{\vee}_{\Sp_{\geq 0}^{\omega}}, \hspace{20pt}
	 \iota_n^*(\mb{Z}^{\wedge}_{\Sp^{\omega}}) \inc \mb{Z}^{\wedge}_{\Sp_{\geq 0}^{\omega}}.\] 
	
	By \cite[Proposition 6.1.4.14]{HA}, we have equivalences
	
	\begin{align*} \Fun^{n-hmg}(\mb{S}, \mb{E}) & \simeq \mb{SymFun}^n_{\text{lin}}(\mb{S}, \mb{E}) \\
		& \simeq \Fun^{1-exc}(\mb{S}^{\otimes n}_{h \Si_n}, \mb{E}), \end{align*}
	the category of symmetric multilinear functors, whence
	\begin{align*}
	\Fun^{n-hmg}(\mb{S}, \mb{E}) & \simeq \Fun^{1-exc}(\mb{S}_{h \Si_n}, \mb{E}) \\
	& \simeq \Fun^{1-exc}(\mb{S}, \mb{E}^{h \Si_n}) \\
	& \simeq \mb{E}^{h \Si_n}
	\end{align*}
	for the trivial $\Si_n$-actions on $\mb{S}$ and $\mb{E}$. Indeed, since $\Sp^\omega_{\geq 0}$ is freely generated by $\bb{S}$ as an additive category with finite colimits, and $\Sp^\omega$ is freely generated by $\bb{S}$ as a stable category, both are tensor-idempotent, and we have
	\[ \Fun^{1-exc}(\mb{S}, \mb{E}) \simeq \mb{E} \]
	whenever $\mb{E}$ is stable. Since this chain of equivalences is compatible with restriction along $\iota$, this shows that 
	\[ \iota_n^*(\mb{Z}^{\vee}) \simeq \mb{Z}^{\vee}_{\geq 0} .\]
	For the inclusion
	\[ \iota_n^*(\mb{Z}^{\wedge}) \inc \mb{Z}^{\wedge}_{\geq 0}, \]
	we let $L_n$ be the left adjoint of $\iota_n^*$. This is left Kan extension $\iota_!$ along $\iota$ followed by $n$-excisivization:
	\[ L_n = P_n\iota_!. \] 
	Now it suffices to show that
	\[ L_n(\mb{U}_{\geq 0}) \inc \mb{U}, \] 
	since this implies that if $X \in \mb{Z}^{\wedge}$ and $Y \in \mb{U}_{\geq 0}$, then
	\[ \Map(Y, \iota_n^* Z) \simeq \Map(L_n Y, Z) \simeq *. \]
	Since $\iota$ itself has a right adjoint, the connective cover functor $\tau_{\geq 0}$, $\iota_!$ is just precomposition with $\tau_{\geq 0}$. This, along with the next set of lemmas, will show that if $F$ is $(n-1)$-excisive then $L_n F$ is $(n - 1)$-excisive:
	
	\begin{lem}
		Suppose $F : \Sp \to \mb{E}$ is a functor whose restriction to $\Sp_{\geq 0}$ is $n$-excisive. Then the unit of the $n$-excisivization
		\[ \eta : F \to P_n F \]
		is an equivalence on objects of $\Sp_{\geq 0}$.
	\end{lem}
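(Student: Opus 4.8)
The plan is to compute with Goodwillie's explicit model $P_n F \simeq \colim{k} T_n^k F$, where $(T_n F)(X) = \lim_{\emp \neq S \inc \angs{n+1}} F(X \star S)$ and $X \star S$ is the join construction of \cite[\S 6.1.1]{HA}. The crucial input is a connectivity observation: \emph{if $X \in \Sp_{\geq 0}$ and $S$ is a nonempty finite set, then $X \star S$ is again connective}. Indeed, the cube $P \mapsto X \star P$ indexed by the subsets $P \inc S$ is strongly cocartesian, i.e. it is left Kan extended from its restriction to $\{P : |P| \leq 1\}$; since the values there are $X$ (at $\emp$) and $X \star \{s\} \simeq 0$ for each $s \in S$ (the cone on $X$, which vanishes in a stable category), every $X \star P$ --- in particular $X \star S$ --- is a finite colimit of connective spectra, hence connective.

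First I would record the consequence that, for $X \in \Sp_{\geq 0}$, every spectrum occurring in the iterated formula for $(T_n^k F)(X)$ is connective: this follows from the connectivity observation by induction on $k$, using that if $Y$ is connective then so is $Y \star S$, so that $(X \star S) \star S'$ is connective, and so on. Consequently, writing $\iota : \Sp_{\geq 0} \to \Sp$ for the inclusion, $\iota^*(P_n F)$ depends only on $\iota^* F$; more precisely, since $\iota$ preserves finite colimits (it is a left adjoint, with right adjoint $\tau_{\geq 0}$) it preserves the join construction, and there is a canonical equivalence
\[ \iota^*(P_n F) \simeq P_n(\iota^* F) \]
of functors $\Sp_{\geq 0} \to \mb{E}$ which is compatible with the units --- that is, $\iota^* \eta$ is identified with the unit transformation $\iota^* F \to P_n(\iota^* F)$. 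This last compatibility is an instance of the naturality of the Goodwillie tower in the source variable; it can be verified directly from the join formula, since at each stage $\iota$ carries the strongly cocartesian cube used to build $T_n^k(\iota^* F)$ to the one used to build $T_n^k F$.

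To conclude: by hypothesis $\iota^* F$ is $n$-excisive, so it is its own $n$-excisive approximation and the unit $\iota^* F \to P_n(\iota^* F)$ is an equivalence. Transporting across the equivalence $\iota^*(P_n F) \simeq P_n(\iota^* F)$ above, $\iota^* \eta$ is an equivalence, which says precisely that $\eta_X : F(X) \to (P_n F)(X)$ is an equivalence for every $X \in \Sp_{\geq 0}$.

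The main obstacle is not a hard estimate but the bookkeeping around the identification $\iota^*(P_n F) \simeq P_n(\iota^* F)$: one must be sure it is compatible with the units $\eta$, not merely an abstract equivalence of functors. Once the connectivity observation is available this compatibility is essentially forced by the universal property of $P_n$, but it is the point that deserves care; the fully hands-on alternative is to check, level by level in the tower $\{T_n^k\}$, that $\iota^*$ of each structure map $T_n^{k} F \to T_n^{k+1} F$ is the corresponding structure map for $\iota^* F$.
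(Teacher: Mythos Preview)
Your proof is correct and follows exactly the approach the paper intends: the paper's proof is the single sentence ``This is clear from Goodwillie's original construction of $P_n F$, which is described in sufficient generality in \cite[Construction 6.1.1.27]{HA},'' and what you have written is precisely the unpacking of that sentence --- the connectivity of the objects $X \star S$ appearing in $T_n^k$, the resulting identification $\iota^*(P_nF)\simeq P_n(\iota^*F)$ compatible with units, and the conclusion from the hypothesis that $\iota^*F$ is already $n$-excisive.
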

	\begin{proof}
		This is clear from Goodwillie's original construction of $P_n F$, which is described in sufficient generality in \cite[Construction 6.1.1.27]{HA}.
	\end{proof}
	We deduce that
	\[ \iota_n^* L_n \simeq \id. \]
	\begin{lem}
		If $F$ is $n$-excisive and $F$ evaluates to zero on connective spectra, then $F = 0$.
	\end{lem}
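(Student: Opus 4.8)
The plan is to descend the connectivity filtration on $\Sp$. Writing $\Sp_{\geq m}$ for the full subcategory of $m$-connective spectra, I would first establish the inductive claim that if $F$ annihilates $\Sp_{\geq m}$ then it annihilates $\Sp_{\geq m-1}$. Given $Y \in \Sp_{\geq m-1}$, form the strongly cocartesian $(n+1)$-cube $\chi : \mc{P}(\angs{n+1}) \to \Sp$ obtained as the left Kan extension, along the inclusion of the subsets of $\angs{n+1}$ of cardinality at most one, of the diagram sending $\emp$ to $Y$ and each singleton to $0$. For nonempty $U \inc \angs{n+1}$, the fact that $\chi$ is strongly cocartesian identifies $\chi_U$ with the colimit of the fan $\{\, \chi_\emp = Y \to \chi_{\{i\}} = 0 \,\}_{i \in U}$, that is, with the cofiber of the codiagonal $Y^{\oplus |U|} \to Y$; hence $\chi_U \simeq (\Si Y)^{\oplus(|U|-1)}$, and this lies in $\Sp_{\geq m}$ since $Y$ lies in $\Sp_{\geq m-1}$. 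Because $F$ is $n$-excisive, the composite $F \circ \chi$ is a cartesian $(n+1)$-cube in $\mb{E}$ (which has all finite limits, being stable), so $F(Y) \simeq \lim_{\emp \neq U \inc \angs{n+1}} F(\chi_U)$; each term on the right vanishes by hypothesis, and a finite limit of zero objects is a zero object, so $F(Y) \simeq 0$.

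The hypothesis of the lemma is exactly that $F$ annihilates $\Sp_{\geq 0}$, so iterating the inductive claim shows $F$ annihilates $\Sp_{\geq m}$ for every $m \in \bb{Z}$, i.e.\ $F$ vanishes on all bounded-below spectra. To finish, I would use that every $X \in \Sp$ is the filtered colimit $X \simeq \colim{m} \tau_{\geq -m} X$ of its connective covers, which are bounded below; since $F$ preserves filtered colimits (our standing convention for polynomial functors out of a presentable category), $F(X) \simeq \colim{m} F(\tau_{\geq -m} X) \simeq 0$, and therefore $F \simeq 0$.

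The proof is essentially routine once the right cube is in hand; the only point demanding a little care — and hence the main (mild) obstacle — is precisely that choice of cube. One wants the strongly cocartesian $(n+1)$-cube all of whose vertices except the initial one acquire a suspension, so that stepping off $\chi_\emp$ strictly raises connectivity, and one must keep straight that $n$-excisiveness converts this \emph{cocartesian} cube into a \emph{cartesian} one, thereby expressing $F(Y)$ as a limit of values of $F$ on more highly connected spectra. No genuine difficulty arises beyond this bookkeeping.
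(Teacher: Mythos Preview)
Your argument is correct and takes a genuinely different route from the paper's. The paper argues by induction on the degree of excision: since $F$ vanishes on connective spectra one has $\text{cr}_n F(\bb{S},\cdots,\bb{S}) = 0$, so the $n$th derivative of $F$ vanishes and $F$ is actually $(n-1)$-excisive; iterating down the Taylor tower gives $F = 0$. You instead induct on connectivity, feeding the definition of $n$-excisiveness a strongly cocartesian $(n+1)$-cube whose noninitial vertices all acquire a suspension, so that $F(Y)$ becomes a finite limit of values already known to vanish. The paper's proof is terser and fits naturally into the Goodwillie-calculus framework of the surrounding argument, but it leans on the identification of derivatives with cross effects and the existence of the Taylor tower. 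Your approach is more self-contained, using nothing beyond the raw definition of $n$-excisive and elementary connectivity bookkeeping; it would work equally well in settings where one has not yet developed the tower. One small remark on your final step: if the ambient source category is $\Sp^{\omega}$ (as in the enclosing proposition), every finite spectrum is already bounded below and the filtered-colimit argument is unnecessary; if the source is $\Sp$, your appeal to the standing convention is exactly what is needed.
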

	\begin{proof}
		This means that
		\[ \text{cr}_nF(\bb{S}, \cdots, \bb{S}) = 0, \]
		and so the $n$th derivative of $F$ is zero and $F$ is actually $(n-1)$-excisive. By induction down the Taylor tower, $F = 0$.
	\end{proof}
	\begin{lem}\label{lem:resn-1exc}
		If the restriction of $F$ to $\Sp_{\geq 0}$ is $(n - 1)$-excisive, then $P_n F$ is $(n - 1)$-excisive.
	\end{lem}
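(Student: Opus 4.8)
The plan is to reduce the statement to the vanishing of the $n$-th layer of the Goodwillie tower of $F$. Write $D_n F := \Fib(P_n F \to P_{n-1} F)$, the $n$-homogeneous layer, and let $p : P_n F \to P_{n-1} F$ be the structure map of the tower. If we can show $D_n F \simeq 0$, then $p$ is an equivalence (fibers are computed pointwise in the stable category $\mb{E}$, so trivial fiber forces an equivalence), and hence $P_n F \simeq P_{n-1} F$ is $(n-1)$-excisive, as desired.

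To see $D_n F \simeq 0$, first note that since $F|_{\Sp_{\geq 0}}$ is $(n-1)$-excisive it is in particular $n$-excisive, so the preceding lemma applies both with parameter $n$ and with parameter $n-1$: the units $\eta_n : F \to P_n F$ and $\eta_{n-1} : F \to P_{n-1} F$ are each equivalences after restriction to $\Sp_{\geq 0}$. The composite $F \to P_n F \xrightarrow{p} P_{n-1} F$ is the unit $\eta_{n-1}$ (the standard compatibility of the Goodwillie tower with the localizations $P_k$, as in \cite[\S 6.1]{HA}), so by two-out-of-three for equivalences $p$ restricts to an equivalence on $\Sp_{\geq 0}$; hence $D_n F = \Fib(p)$ evaluates to zero on every connective spectrum. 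Finally, $D_n F$ is a fiber of two $n$-excisive functors, hence $n$-excisive, and it vanishes on connective spectra, so the lemma asserting that such a functor is identically zero gives $D_n F \simeq 0$.

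The only delicate point is the identification of $p \circ \eta_n$ with $\eta_{n-1}$, i.e. that the transformations assembling the Taylor tower interact correctly with the approximations $P_k$; this is classical and should be cited rather than reproved. An alternative route would be to argue that the $n$-th derivative $\partial_n F$ is computed from $n$-th cross effects evaluated on suspension spectra of spheres, which are all connective, so $\partial_n F \simeq \partial_n(F|_{\Sp_{\geq 0}}) = 0$; but the layer-theoretic argument above sidesteps having to compare derivatives of functors on $\Sp$ with those on $\Sp_{\geq 0}$ and simply recycles the two lemmas already proved.
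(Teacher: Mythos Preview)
Your proof is correct and follows essentially the same approach as the paper: show that the fiber $D_nF$ of $P_nF \to P_{n-1}F$ vanishes on connective spectra using the first preceding lemma, then invoke the second preceding lemma to conclude $D_nF \simeq 0$. The paper compresses this into two sentences, whereas you spell out the two-out-of-three step and the compatibility $p \circ \eta_n \simeq \eta_{n-1}$ explicitly, but the logic is identical.
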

	\begin{proof}
		The fiber of $P_nF \to P_{n - 1}F$ is zero on connective spectra, and therefore it is zero. Thus $P_nF$ is $(n-1)$-excisive.
	\end{proof}
	The hypotheses of Lemma \ref{lem:resn-1exc} apply in particular to $L_n F$ when $F \in \mb{U}_{\geq 0}$. This completes the proof of Proposition \ref{prop:connsp}.
	
\end{proof}
Suppose now that $\mb{E}$ is presentable. Let $A(\F)$ be the Burnside category of finite sets, or equivalently, the full subcategory of $\Sp$ spanned by spectra equivalent to finite direct sums of copies of $\bb{S}$. When discussing objects of $A(\F)$, we will use $\angs{n}$ to denote a chosen $n$-element set. Let $\Im$ denote the inclusion $A(\F) \to \Sp$. Then $\Im$ exhibits $\Sp_{\geq 0}$ as the nonabelian derived category of $A(\F)$. Therefore:

\begin{cor}\label{cor:resaf}
For each $n$, the restriction functor
\[ \Im^*_n : \Fun^{n-exc}(\Sp_{\geq 0}, \mb{E}) \to \Fun(A(\F), \mb{E}) \]
is fully faithful.
\end{cor}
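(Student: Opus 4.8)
The plan is to deduce this corollary formally from Proposition \ref{prop:connsp} together with the universal property of the nonabelian derived category. First I would recall the setup: the statement just before the corollary asserts that $\Im : A(\F) \to \Sp$ exhibits $\Sp_{\geq 0}$ as the nonabelian derived category $\Psh^\oplus(A(\F))$ — indeed $A(\F)$ is the additive hull of the point spectrum $\bb{S}$, and $\Sp_{\geq 0}$ is freely generated under sifted colimits by $\bb{S}$ as a semiadditive $\iy$-category, which is exactly the characterization in Recollection \ref{rec:nonabder}. Given this, the universal property of $\Psh^\oplus$ says that for any $\iy$-category $\mb{E}$ with sifted colimits (in particular any presentable $\mb{E}$), restriction along $\Im$ induces an equivalence between the category $\Fun^{\Sigma}(\Sp_{\geq 0}, \mb{E})$ of functors preserving sifted colimits and $\Fun(A(\F), \mb{E})$.

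The key point is then to identify $\Fun^{n-exc}(\Sp_{\geq 0}, \mb{E})$ with a full subcategory of $\Fun^{\Sigma}(\Sp_{\geq 0}, \mb{E})$. Here I would invoke the convention stated in the introduction: polynomial functors with presentable source are assumed to preserve filtered colimits. The remaining task is to observe that an $n$-excisive functor $\Sp_{\geq 0} \to \mb{E}$ which preserves filtered colimits automatically preserves all sifted colimits, hence in particular geometric realizations. The cleanest route is that $\Sp_{\geq 0}$ is generated under geometric realizations by finite coproducts of $\bb{S}$ — equivalently, every connective spectrum is a realization of a simplicial object levelwise a finite wedge of spheres — and an $n$-excisive functor, being built from a finite Taylor tower whose layers are (infinite loop spaces of) multilinear functors, commutes with realizations of such simplicial objects because each $n$-homogeneous layer is computed from cross effects which are themselves finitary and exact in each variable. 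Alternatively, and more simply, any functor preserving filtered colimits and finite colimits preserves sifted colimits, and $n$-excisive functors preserve finite colimits up to... no — so the honest argument is that $P_n$ commutes with sifted colimits on a compactly generated source when restricted to finitary functors, a standard fact about Goodwillie towers.

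So the chain of equivalences and inclusions is: $\Fun^{n-exc}(\Sp_{\geq 0}, \mb{E}) \hookrightarrow \Fun^{\Sigma}(\Sp_{\geq 0}, \mb{E}) \xrightarrow{\ \Im^* \ } \Fun(A(\F), \mb{E})$, where the first functor is the fully faithful inclusion of a full subcategory and the second is the equivalence furnished by the nonabelian derived category universal property. The composite $\Im^*_n$ is therefore fully faithful, which is exactly the assertion. I would also note that this gives a clean criterion for the essential image: a functor $A(\F) \to \mb{E}$ lies in the image iff its sifted-colimit-preserving extension to $\Sp_{\geq 0}$ is $n$-excisive, a point that will presumably be used later when combining with the combinatorial reductions.

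The main obstacle I anticipate is the claim that a finitary $n$-excisive functor on $\Sp_{\geq 0}$ preserves sifted colimits. This is not entirely formal — one needs that $n$-excisive approximation $P_n$, which is a transfinite composite of the $T_n$ construction involving finite homotopy limits over punctured cubes, interacts well with geometric realizations. On a source like $\Sp_{\geq 0}$ where finite limits and filtered colimits and realizations can all be taken at the level of the generating finite objects, this works out, and it can be cited from the literature on finitary Goodwillie calculus (e.g. the discussion around \cite[\S 6.1]{HA}); but it is the one place where a genuine argument, rather than an invocation of a universal property, is required. Everything else is bookkeeping with Recollection \ref{rec:nonabder} and the stated conventions.
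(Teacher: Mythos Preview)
Your approach is correct and essentially identical to the paper's: factor $\Im_n^*$ as the inclusion $\Fun^{n\text{-}exc}(\Sp_{\geq 0}, \mb{E}) \hookrightarrow \Fun^{\Sigma}(\Sp_{\geq 0}, \mb{E})$ followed by the equivalence $\Im^*_\Sigma$ coming from the nonabelian derived category universal property. The only difference is in how the key step---that finitary $n$-excisive functors preserve sifted colimits---is handled: the paper uses exactly the first argument you sketched and then abandoned, namely induction up the Taylor tower together with Goodwillie's classification of homogeneous functors and the observation that tensor power functors preserve sifted colimits (citing \cite[Corollary 6.1.4.15]{HA}). This is cleaner than trying to argue via commutation of $P_n$ with geometric realizations, so you should trust your first instinct here.
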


\begin{proof}
	Indeed, if $\Fun^\Si(\Sp_{\geq 0}, \mb{E})$ is the category of sifted-colimit-preserving functors from $\Sp$ to $\mb{E}$, then the restriction
	\[ \Im^*_\Si : \Fun^\Si(\Sp_{\geq 0}, \mb{E}) \to \Fun(A(\F), \mb{E}) \]
	is an equivalence. But it follows from Goodwillie's classification of homogeneous functors (see e.g. the proof of \cite[Corollary 6.1.4.15]{HA}, since Goodwillie's papers can be a little hard to track down these days), together with induction on $n$ and the fact that tensor power functors preserve sifted colimits, that $n$-excisive functors all preserve sifted colimits.
\end{proof}

\begin{prop}\label{prop:nexcessim}
	The essential image of $\Fun^{n-exc}(\Sp_{\geq 0}, \mb{E})$ in $\Fun(A(\F), \mb{E})$ is the category $\Fun^{\leq n}(A(\F), \mb{E})$ spanned by those functors $F$ which, for each finite set $S$ of cardinality $|S| > n$, map the cube of projections (a.k.a. contravariant injections or inert maps)
	\[\varrho_S : (\mb{Cube}^S)^\op \to A(\F) \hspace{50pt} U \mapsto U\]
	to a cartesian cube in $E$. Rephrased, this is the condition that for each $m > n$, the cross effect
	\[ \text{cr}_SF(\angs{1}, \angs{1}, \cdots, \angs{1}) = 0.\]
	We will call objects of $\Fun^{\leq n}(A(\F), \mb{E})$ \emph{degree $n$ functors}.
\end{prop}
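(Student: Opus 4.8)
The plan is to identify the essential image of $\Im_n^*$ by first understanding the $n$-excisive condition in terms of cross effects, then transporting it across the equivalence $\Im_\Si^*$ of Corollary \ref{cor:resaf}. Recall that a functor $F : \Sp_{\geq 0} \to \mb{E}$ is $n$-excisive if and only if its $(n+1)$st cross effect $\operatorname{cr}_{n+1} F$ vanishes, equivalently $\operatorname{cr}_m F = 0$ for all $m > n$. The key observation is that the cross effect $\operatorname{cr}_S F$, as a functor of $S$ variables, is computed as the total fiber of an $S$-cube built from the coordinate projections, and when all the inputs are the sphere spectrum $\bb{S}$ this is precisely the total fiber of $F \circ \varrho_S$. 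So the first step is to make precise the claim that, for $F$ in the image of $\Fun^{n-exc}(\Sp_{\geq 0}, \mb{E})$, the vanishing of $\operatorname{cr}_m F$ for $m > n$ is detected on copies of $\bb{S}$, i.e.\ by the cubes $\varrho_S$ for $|S| > n$. This is where one needs that $n$-excisive functors preserve sifted colimits (already invoked in Corollary \ref{cor:resaf}) and that every connective finite spectrum is built from $\bb{S}$ under sifted colimits: cross effects commute with sifted colimits in each variable (being finite limits of the functor, which preserves sifted colimits, or more carefully, total fibers of cubes whose vertices are sifted-colimit-preserving), so $\operatorname{cr}_m F$ vanishes identically iff it vanishes on tuples of spheres.

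The second step is the converse direction: given $F \in \Fun^{\leq n}(A(\F), \mb{E})$, we must show its essentially unique extension $\widetilde F$ to $\Fun^\Si(\Sp_{\geq 0}, \mb{E})$ (via the inverse of $\Im_\Si^*$) is $n$-excisive. Equivalently, $\operatorname{cr}_{n+1} \widetilde F = 0$. Now $\operatorname{cr}_{n+1}\widetilde F : (\Sp_{\geq 0})^{n+1} \to \mb{E}$ is itself a sifted-colimit-preserving functor in each variable (total fibers of strongly cocartesian cubes, and $\widetilde F$ preserves sifted colimits), so it is determined by its restriction to tuples of objects of $A(\F)$; by multilinearity in the additive structure it is further determined by its value on tuples $(\angs{1}, \dots, \angs{1})$, which is the total fiber of $\widetilde F \circ \varrho_{\angs{n+1}} = F \circ \varrho_{\langle n+1\rangle}$. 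By hypothesis this total fiber vanishes, so $\operatorname{cr}_{n+1}\widetilde F$ vanishes on spheres, hence identically, hence $\widetilde F$ is $n$-excisive. Combined with the fully faithfulness from Corollary \ref{cor:resaf}, this shows the essential image is exactly $\Fun^{\leq n}(A(\F), \mb{E})$.

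The main obstacle I anticipate is making the reduction ``$\operatorname{cr}_m F$ vanishes on spheres $\Rightarrow$ it vanishes everywhere'' fully rigorous in the $\iy$-categorical setting. The subtlety is that $\operatorname{cr}_m F$ is a functor of several variables, and one wants to say it is determined in each variable separately by sifted colimits out of $A(\F)$ — this requires knowing that fixing all but one input of a cross effect still yields a sifted-colimit-preserving functor, and then that multilinearity on $A(\F)$ (the additive structure) pins down the value on arbitrary tuples of sums of spheres from the value on single spheres. Equivalently, one reduces via Goodwillie's classification: the homogeneous layer $D_m F$ is governed by a symmetric multilinear functor $\mb{E} \to \mb{E}$ (after the identification $\Fun^{1-exc}(\Sp_{\geq 0}, \mb{E}) \simeq \mb{E}$ used in the proof of Proposition \ref{prop:connsp}), and such a functor is zero iff its value at $\bb{S}^{\otimes m}$, namely the total fiber of the $\langle m\rangle$-cube, is zero. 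Once this bookkeeping is in place, the rest — translating the total-fiber condition into the statement about cartesian cubes $\varrho_S$ in $A(\F)$, and noting the rephrasing as $\operatorname{cr}_S F(\angs 1, \dots, \angs 1) = 0$ — is formal.
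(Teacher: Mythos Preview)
Your overall strategy matches the paper's: show that the image of $\Im_n^*$ lands in $\Fun^{\leq n}(A(\F),\mb{E})$ (easy), and conversely that the sifted-colimit extension of a degree-$n$ functor is $n$-excisive by checking that its higher cross effects vanish. The paper also reduces this, via sifted colimits, to showing that $\mathrm{cr}_m F(S_1,\ldots,S_m)=0$ for $m>n$ and $S_i\in A(\F)$. So far so good.

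The gap is in the next step: passing from vanishing of $\mathrm{cr}_m F(\angs{1},\ldots,\angs{1})$ to vanishing of $\mathrm{cr}_m F(S_1,\ldots,S_m)$ for arbitrary finite sums of spheres. You invoke ``multilinearity in the additive structure'', but cross effects of a general functor are only multi-\emph{reduced}, not multilinear; the failure of additivity in one slot of $\mathrm{cr}_m$ is measured by $\mathrm{cr}_{m+1}$, so one cannot simply split off summands. Your alternative via Goodwillie's classification of homogeneous functors is circular as stated: that classification applies to the layers $D_m$ of the Taylor tower, and knowing $D_m\widetilde F=0$ for $m>n$ does not give $\widetilde F$ $n$-excisive without convergence information you don't have.

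The paper fills this gap with an explicit combinatorial induction. Writing $\mathrm{cr}_T F((\phi^{-1}(t))_{t\in T})$ as the total fiber of the $T$-cube $F\varrho_S\phi^*$ for a surjection $\phi:S\to T$, it proves this cube is cartesian by induction on $|S|$ and $|S|-|T|$: one peels off a single element $s\in S$ from a non-singleton fiber and exhibits $F\varrho_S\phi^*$ as a concatenation of two $T$-cubes already known to be cartesian by the inductive hypothesis (one coming from $S\setminus\{s\}$, the other from a refined surjection $\lambda:S\to T\cup\{s\}$). This is precisely the mechanism that trades a non-singleton input for a higher-order cross effect with smaller total size, making the induction terminate; your proposal does not supply any such argument.
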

\begin{proof}
	Since all $m$th order cross effects of $n$-excisive functors vanish for $m > n$, it's clear that the image of $\Im_n^*$ is contained in $\Fun^{\leq n}(A(\F), \mb{E})$. To prove the converse, we need to show that the left Kan extension along $\Im$ of any $F : A(\F) \to \mb{E}$ of degree $n$ is $n$-excisive. For this, we note that $G \in \Fun^{\leq n}(\Sp_{\geq 0}, \mb{E})$ is $n$-excisive if and only if the cross effects
	\[ \text{cr}_{\angs{m}} G(X_1, X_2, \cdots, X_m), \]
	are zero whenever $m > n$. On the other hand, we may write
	\[ X_i = \colim{\D^\op} X_{i, \bullet} \]
	where $X_{i, \bullet} : \D^\op \to A(\F)$ is a diagram with image in $A(\F)$. Since $G$ preserves sifted colimits, we have
	\[\text{cr}_{\angs{m}} G(X_1, X_2, \cdots, X_m) \simeq \colim{\D^{\op}}\text{cr}_{\angs{m}}(X_{1, \bullet}, X_{2, \bullet}, \cdots, X_{n, \bullet}.)  \]
	So it suffices to prove that if $F \in \Fun^{\leq n}(A(\F), \mb{E})$, then
	\[ \text{cr}_{\angs{m}}F(S_1, S_2, \cdots, S_m) = 0 \]
	whenever $m > n$ and $S_1, S_2, \cdots S_m$ are objects of $A(\F)$. 
	
	Now suppose that $S$ is a finite set and $\phi : S \to T$ is a surjective map of sets. We get a functor
	\[ \phi^* : (\mb{Cube}^T)^{\op} \to (\mb{Cube}^S )^{\op}\]
	by letting $\psi(U)$ be the preimage of $U$ under $\phi$. We claim that if $F : A(\F) \to \mb{E}$ is degree $n$, then $F \varrho_S \phi^*$ is a cartesian $T$-cube whenever $|T| > n$; if we can establish this, we can conclude, because $F \varrho_S \phi^*$ is the cube whose total fiber is the cross effect
	\[ \text{cr}_T F((\phi^{-1}(t))_{t \in T}). \]
	 We'll prove this statement by induction on $|S|$ and $|S| - |T|$, by representing $F \varrho_S \phi^*$ as the concatenation of two $T$-cubes we know to be cartesian by the induction hypothesis. This involves some moderately hard combinatorics.
	
	If $|S| = |T|$, then $\phi^*$ is an isomorphism and the conclusion is clear. Moreover, if $|S| = n + 1$, then this is always the case. Otherwise, we may choose some $t \in T$ such that $|\phi^{-1}(T)| > 1$, and pick some $s \in \phi^{-1}(T)$. By induction, 
	\[ F \varrho_{S \setminus \{s\}} (\phi|_{S \setminus \{s\}})^* : (\mb{Cube}^T)^{\op} \to \mb{E} \]
	is cartesian. Now define
	\[ \psi : (\mb{Cube}^T)^{\op} \to (\mb{Cube}^S)^{\op} \]
	by
	\[ \psi(U) = \begin{cases} \phi^*(U) & t \in U \\ \phi^*(U \cup \{t\}) \setminus \{s\} & t \notin U. \end{cases} \]
	We claim that  
	\[ F \varrho_S \psi : (\mb{Cube}^{T})^{\op} \to \mb{E} \]
	is cartesian. To see this, let
	\[ T' = T \cup \{s\} \]
	and define $\lambda : S \to T'$ by
	\[ \lambda(x) = \begin{cases}
	\phi(x) & x \neq s \\
	s & x = s.
	\end{cases}  \]
	By induction, $F \varrho_S \lambda^*$ is cartesian. Moreover, if
	\[ T'' = T' \setminus \{t\}, \, \, S'' = S \setminus (\phi^{-1}(t) \setminus \{s\}), \]
	then the restriction of $F \varrho_S \lambda^*$ to the face $(\mb{Cube}^{T''})^\op$ of $(\mb{Cube}^{T'})^\op$ is naturally identified with
	\[ F \varrho_{S''} (\phi|_{S''})^* : (\mb{Cube}^{T''})^{\op} \to \mb{E}, \]
	and is thus also cartesian by induction. Thus the opposite face of $F \varrho_S \lambda^*$ is also cartesian. But this face can be identified with $F \varrho_S \psi$.
	
	Now 
	\[ \psi|_{(\mb{Cube}^{T \setminus \{t\}})^{\op}} : (\mb{Cube}^{T \setminus \{t\}})^{\op} \to (\mb{Cube}^{S})^{\op} \]
	has the same image as
	\begin{align*} (\phi|_{S \setminus \{s\}})^*|_{(\mb{Cube}^{T})^{\op} \setminus (\mb{Cube}^{T \setminus t})^{\op}} & : (\mb{Cube}^{T})^{\op} \setminus (\mb{Cube}^{T \setminus \{t\}})^{\op} \\& \to (\mb{Cube}^{S \setminus \{s \}})^{\op} \\& \to (\mb{Cube}^{S})^{\op}.\end{align*}
	Gluing $\psi$ and $(\phi|_{S \setminus \{s\}})^{*}$ along this common face gives $\phi^*$, and applying $F \varrho_S$ represents $F \varrho_S \phi^*$ as the composite of two $T$-cubes proven to be cartesian, which gives the result.
	
\end{proof}
\section{Analytic continuation of polynomials}\label{sec:anacon}
In this section, we'll prove the following possibly surprising result:
\begin{thm}\label{thm:anacon}
	Let $\mb{E}$ be a stable category. Let $\Fun^{\leq n}(A(\F), \mb{E})$ be as in Proposition \ref{prop:nexcessim}, and let \newline $\Fun^{\leq n}(A^{eff}(\F), \mb{E})$ be the full subcategory of $\Fun(A^{eff}(\F), \mb{E})$ defined by the same condition on vanishing of cross effects. Then restriction along the additive completion functor
	\[ R_n : \Fun^{\leq n}(A(\F), \mb{E}) \to \Fun^{\leq n}(A^{eff}(\F), \mb{E}) \]
	is an equivalence of categories.
\end{thm}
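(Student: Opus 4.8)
The plan is to compare the two categories by passing to their free presentable semiadditive completions and exploiting the fact that the degree-$n$ condition is a colimit-compatible localization. First I would recall that $A(\F) = \CMon$-envelope of $A^{eff}(\F)$ in the precise sense that $A(\F)$ is the additive completion (group completion of the mapping monoids) of the semiadditive category $A^{eff}(\F)$; equivalently, $\Sp_{\geq 0}$ is the nonabelian derived category of $A^{eff}(\F)$ while $\Sp$ is the nonabelian derived category of $A(\F)$, and the additive completion functor $c : A^{eff}(\F) \to A(\F)$ induces, after taking $\Psh^\oplus$ of both sides, the group-completion (stabilization) functor $\Psh^\oplus(A^{eff}(\F)) = \Sp_{\geq 0} \to \Sp = \Psh^\oplus(A(\F))$, which is a smashing localization with fully faithful right adjoint the connective cover. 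So $c_! = \tau_{\geq 0}^*$-left-adjoint is a Bousfield localization of presentable semiadditive categories.

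Next I would reduce the statement to a statement about presentable extensions. By the usual adjunction $\Fun^\oplus(A^{eff}(\F), \mb{E}) \simeq \Fun^L(\Psh^\oplus(A^{eff}(\F)), \mb{E}) = \Fun^L(\Sp_{\geq 0}, \mb{E})$ — and similarly without the superscript $\oplus$, using that $\Psh^\oplus$ is the free presentable semiadditive category — restriction along $c$ corresponds to restriction along the localization $L : \Sp_{\geq 0} \to \Sp$, i.e. to precomposition with the fully faithful right adjoint $\tau_{\geq 0} : \Sp_{\geq 0} \hookrightarrow \Sp$. Wait — that's the wrong variance; let me instead phrase it as: a colimit-preserving functor $\Sp \to \mb{E}$ restricts to a colimit-preserving functor $\Sp_{\geq 0} \to \mb{E}$, and since $\Sp$ is the stabilization of $\Sp_{\geq 0}$, this restriction is \emph{fully faithful} onto those functors $\Sp_{\geq 0} \to \mb{E}$ that send the class $W$ of maps inverted by group completion to equivalences in $\mb{E}$ — but every colimit-preserving functor out of $\Sp_{\geq 0}$ into a stable category automatically factors through $\Sp$, because such a functor is exact (preserves finite colimits and the zero object, hence finite limits of the relevant shape... ) Actually the cleanest statement: since $\mb{E}$ is stable, $\Fun^L(\Sp_{\geq 0}, \mb{E}) \simeq \Fun^L(\Sp, \mb{E}) \simeq \mb{E}$ already, because $\Sp$ is the free presentable stable category on a point and $\Sp_{\geq 0}$ is the free presentable \emph{semiadditive} category on a point, but a colimit-preserving functor from the latter to a stable target automatically inverts the group-completion maps. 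This is exactly the content quietly used in the proof of Proposition~\ref{prop:connsp} (``$\Sp^\omega$ is freely generated by $\mathbb{S}$ as a stable category, $\Sp^\omega_{\geq 0}$ as an additive category with finite colimits, both tensor-idempotent''). So at the level of \emph{all} functors (forgetting the degree condition), restriction along $c$ is already fully faithful on the $\Fun^\oplus$-subcategories, and more to the point induces an equivalence $\Fun^\oplus(A(\F),\mb{E}) \to \Fun^\oplus(A^{eff}(\F),\mb{E})$... but that can't be literally right either since $\Fun^{\leq n}$ in the statement is a subcategory of \emph{all} functors, not additive ones.

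The correct route, then: show that for a degree-$n$ functor the additive-versus-not distinction evaporates. Concretely, I would prove (a) every $F \in \Fun^{\leq n}(A^{eff}(\F),\mb{E})$ is automatically additive (sends $\oplus$ to products), hence lies in $\Fun^\oplus(A^{eff}(\F),\mb{E})$, and likewise on the $A(\F)$ side — this is because the degree-$n$ / cross-effect-vanishing condition, together with the structure of the cube $\varrho_S$, forces $F(U \amalg V) \simeq F(U) \times F(V)$ (the first cross effect being split means $F(U\amalg V)$ is the total fibre of a square with the other two vertices $F(U), F(V)$ over $F(\emptyset)=0$). Then (b) apply the free-semiadditive-completion adjunction to rewrite $\Fun^{\leq n}(A^{eff}(\F),\mb{E})$ as the full subcategory of $\Fun^L(\Sp_{\geq 0},\mb{E}) \simeq \mb{E}$, and $\Fun^{\leq n}(A(\F),\mb{E})$ as the corresponding full subcategory of $\Fun^L(\Sp,\mb{E}) \simeq \mb{E}$, cut out by the same cross-effect-vanishing condition; then (c) check that under the equivalence $\Fun^L(\Sp_{\geq 0},\mb{E}) \simeq \Fun^L(\Sp,\mb{E})$ (both being $\mb{E}$, via evaluation at $\mathbb{S}$) these two subcategories correspond, because the relevant cross effects $\mathrm{cr}_S F(\angs{1},\dots,\angs{1})$ are computed the same way on either side — the cube $\varrho_S$ lands in $A(\F)$, which maps to both $\Sp_{\geq 0}$ and $\Sp$ compatibly. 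Since $R_n$ is the restriction realizing exactly this comparison, it is an equivalence.

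The main obstacle I expect is step (a) — or rather, the honest version of it: verifying that the cross-effect vanishing condition on $A^{eff}(\F)$ genuinely upgrades an arbitrary functor to an additive one, and that this is compatible with the free-completion identifications, i.e. that restricting an $n$-excisive $G : \Sp_{\geq 0} \to \mb{E}$ to $A^{eff}(\F)$ and then checking the $\varrho_S$-cartesianness really detects $n$-excisiveness. This is morally the $A^{eff}$-analogue of Proposition~\ref{prop:nexcessim}, whose proof involved ``moderately hard combinatorics''; here the combinatorics should be \emph{easier}, since $A^{eff}(\F)$ has fewer morphisms than $A(\F)$ (no formal differences), so I would hope the argument of Proposition~\ref{prop:nexcessim} restricts. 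The genuinely new input is that passing from $A^{eff}(\F)$ to $A(\F)$ — adjoining additive inverses to mapping monoids — does not change the category of degree-$n$ functors, which I would package as: the group-completion map $\Psh^\oplus(A^{eff}(\F)) \to \Psh^\oplus(A(\F))$ becomes an equivalence after localizing both sides at the degree-$n$ condition, because a degree-$n$ functor, being a finite ``polynomial'' in the representables, is insensitive to whether Euler characteristics are taken in a monoid or its group completion. Making ``analytic continuation'' precise in this way — expressing a degree-$n$ functor on $A^{eff}(\F)$ as determined by finitely much data that transports verbatim to $A(\F)$ — is the crux.
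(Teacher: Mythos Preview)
Your proposal has two related but distinct gaps.

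First, a factual error: the nonabelian derived category of $A^{eff}(\F)$ is not $\Sp_{\geq 0}$ but $\CMon$, the $\iy$-category of $E_\iy$-monoids in spaces. (Recall that $A^{eff}(\F)$ is the free semiadditive category on a point, so $\Psh^\oplus(A^{eff}(\F))$ is the free presentable semiadditive category on a point, which is $\CMon$.) Meanwhile $\Sp_{\geq 0}$ is the nonabelian derived category of $A(\F)$, as the paper says just before Corollary~\ref{cor:resaf}. So after passing to presentable completions the question becomes: is restriction along group completion $\CMon \to \Sp_{\geq 0}$ an equivalence on $n$-excisive functors? This is a genuine question, not a tautology --- $\CMon$ is strictly larger than $\Sp_{\geq 0}$.

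Second, and more seriously: your step (a) is false for $n > 1$. A degree-$n$ functor $F$ does \emph{not} send $\oplus$ to products unless $n = 1$; the decomposition you wrote down is $F(U \oplus V) \simeq F(U) \oplus F(V) \oplus \text{cr}_2 F(U, V)$, and $\text{cr}_2 F$ need not vanish. So the reduction to $\Fun^\oplus \simeq \Fun^L(\Psh^\oplus(-), \mb{E}) \simeq \mb{E}$ collapses the whole problem to $n = 1$ and loses all the higher polynomial information. The slogan ``a polynomial is determined by finitely much data'' is morally right, but the data is not a single object of $\mb{E}$; you need to track the whole Taylor tower.

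The paper's argument does exactly this. It first shows (verbatim as in Proposition~\ref{prop:nexcessim}) that $\Fun^{\leq n}(A^{eff}(\F), \mb{E}) \simeq \Fun^{n\text{-exc}}(\CMon, \mb{E})$, and it already knows $\Fun^{\leq n}(A(\F), \mb{E}) \simeq \Fun^{n\text{-exc}}(\Sp_{\geq 0}, \mb{E})$. It then compares these via the recollement formed by the inclusion of $(n-1)$-excisive into $n$-excisive functors, proceeding by induction on $n$: the $n$-homogeneous layers on both sides are identified with $\mb{E}^{h\Si_n}$ by the cross-effect classification, and the left adjoint of restriction --- which is precomposition with the fully faithful inclusion $\varpropto : \Sp_{\geq 0} \hookrightarrow \CMon$ --- preserves $(n-1)$-excisiveness. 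That last observation is what makes the argument close, and it is the analog of the key step in Proposition~\ref{prop:connsp}; there is no shortcut through the universal property of the additive completion.
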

Of course, when $n = 1$, this follows immediately from the universal property of the additive completion functor. That's exactly what makes this statement look strange: constructions defined by universal properties generally deliver neither more nor less than they promise, but the additive completion appears to be bringing unexpected gifts.

We think of this statement as analogous to the following elementary result from analysis: analytic continuation exists and is unique for polynomials. That is, a polynomial function defined on the right half plane $\{s \in \bb{C} \, | \, \Re \, s \geq 0\}$ extends uniquely to a polynomial function defined on the whole of $\bb{C}$.

We thank Akhil Mathew for insights critical to the proof of this theorem. The proof follows the plan of the proof of Proposition \ref{prop:connsp} very closely, and will refer back to that proof at several points; common generalizations of these theorems and much more will appear in future joint work.

\begin{proof}
	First suppose that $\mb{E}$ is presentable. Note that the proof of Proposition \ref{prop:nexcessim} shows verbatim that the restriction
	\[A_n : \Fun^{n-exc}(\CMon, \mb{E}) \to \Fun^{\leq n}(A^{eff}(\F), \mb{E}) \]
	is an equivalence of categories. We will use a recollement argument and induction on $n$ to show that $A_n$ is an equivalence by induction on $n$.
	
	Let
	\[ \mb{U} = \Fun^{(n-1)-exc}(\Sp_{\geq 0}, \mb{E}) \inc \Fun^{n-exc}(\Sp_{\geq 0}, \mb{E}), \]
	and let $\mb{Z}^{\vee}$, $\mb{Z}^{\wedge}$ be the subcategories left resp. right orthogonal to $\mb{U}$. Similarly, define
	\[ \mb{U}^{eff} = \Fun^{(n-1)-exc}(\CMon, \mb{E}) \inc \Fun^{n-exc}(\CMon, \mb{E}), \]
	and let $\mb{Z}^{eff, \vee}$ and $\mb{Z}^{eff, \wedge}$ be its left and right orthogonal subcategories. By the induction hypothesis,
	\[ (A_n)|_{\mb{U}} : \mb{U} \to \mb{U}^{eff} \]
	is an equivalence. By \cite[Proposition 8]{BGc}, in order to show that an $A_n$ is an equivalence, it suffices to show that
	\[ A_n(\mb{Z}^{\vee}) \simeq \mb{Z}^{eff, \vee}, \hspace{20pt}
	A_n(\mb{Z}^{\wedge}) \inc \mb{Z}^{eff, \wedge}.\] 
	For the first equivalence, we note that $\mb{Z}^{\vee}$ is the category of $n$-homogeneous functors $\Sp_{\geq 0} \to \mb{E}$ and $\mb{Z}^{eff, \vee}$ is the category of $n$-homogeneous functors $\CMon \to \mb{E}$. As in the proof of Proposition \ref{prop:connsp}, the usual classification goes through and both categories are identified with $\mb{E}^{h \Si_n}$ by the $n$th cross effect construction.
	
	For the second inclusion, as in the proof of Proposition \ref{prop:connsp}, we consider the left adjoint $L_n$ of $R_n'$; it suffices to show that
	\[ L_n(\mb{U}^{eff}) \inc \mb{U}. \]
	
	 If $GC$ is the group completion functor from $\CMon$ to $\Sp_{\geq 0}$, then $L_n$ is left Kan extension along $GC$ followed by $n$-excisivization:
	\[ L_n = P_n (GC)_!. \]
	Since $GC$ has a right adjoint, namely the inclusion $\varpropto$ of $\Sp^{\geq 0}$ as a full subcategory of $\CMon$, we can identify $(GC)_!$ with $\varpropto^*$. But since $\varpropto$ itself preserves colimits, $\varpropto^*$ preserves $k$-excisive functors for any $k$; thus 
	\[ L_n = \varpropto^*. \]
	In particular, we can take $k = n - 1$, and the conclusion follows.
	
	We have now proved Theorem \ref{thm:anacon} in the case where $\mb{E}$ is presentable. To deduce the result for a general stable category $\mb{E}$, we may replace $\mb{E}$ by the presentable category $\text{Ind}(\mb{E})$, which has $\mb{E}$ as a full subcategory. If
	\[ F: A(\F) \to \text{Ind}(\mb{E}) \]
	is a degree-$n$ functor, then clearly $F \in \Fun^{\leq n}(A(\F), \mb{E})$ if and only if $A_nF \in \Fun^{\leq n}(A^{eff}(\F), \mb{E}).$
\end{proof}

\section{Synthesis}\label{sec:synth}
In this brief concluding section, we declare our hand and chain the functors we've constructed to produce the main equivalence.

\begin{thm}\label{thm:main}
	If $\mb{E}$ is stable and presentable, there is an explicit equivalence of categories from $ \Mack(\Fs^{\leq n}, \mb{E})$ to $\Fun^{n-exc}(\Sp^{\omega}, \mb{E})$.
\end{thm}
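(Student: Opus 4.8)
The plan is to realize the equivalence of Theorem~\ref{thm:main} as an explicit composite of the equivalences established in Sections~\ref{sec:semiadd}--\ref{sec:anacon}; this section is therefore mostly bookkeeping, the one genuinely new ingredient being a truncation of the complementation results of Section~\ref{sec:adjret}. Reading the composite from the functor-calculus side toward the combinatorial side, I would string together
\[ \Fun^{n-exc}(\Sp^\omega, \mb{E}) \os{\iota^*_n}{\simeq} \Fun^{n-exc}(\Sp^\omega_{\geq 0}, \mb{E}) \simeq \Fun^{n-exc}(\Sp_{\geq 0}, \mb{E}) \]
\[ \simeq \Fun^{\leq n}(A(\F), \mb{E}) \os{R_n}{\simeq} \Fun^{\leq n}(A^{eff}(\F), \mb{E}), \]
where the first equivalence is Proposition~\ref{prop:connsp} (Brantner's extension theorem), the second is the convention of the introduction together with the fact, used in Corollary~\ref{cor:resaf}, that $n$-excisive functors preserve sifted colimits, the third combines Corollary~\ref{cor:resaf} with the essential-image computation of Proposition~\ref{prop:nexcessim}, and the fourth is the analytic continuation theorem, Theorem~\ref{thm:anacon}. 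What then remains is an equivalence $\Fun^{\leq n}(A^{eff}(\F), \mb{E}) \simeq \Mack(\Fs^{\leq n}, \mb{E})$.

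For that final link I would re-run the two main theorems of Section~\ref{sec:adjret} in a form truncated to sets of cardinality $\leq n$. Writing $\F^{\leq n}$ for the full subcategory of $\F$ on sets of cardinality at most $n$, the proof of Proposition~\ref{prop:kappa} applies verbatim to show that $A^{eff}(\F_{\F^{\leq n}})$ is the free semiadditive category on $A^{eff}(\F^{\leq n})$, so that $\Fun^\oplus(A^{eff}(\F_{\F^{\leq n}}), \mb{E}) \simeq \Fun(A^{eff}(\F^{\leq n}), \mb{E})$, using that $\mb{E}$ is stable and hence semiadditive. The functor $\phi$ of Definition~\ref{def:phi} restricts to $\phi : \F_{\F^{\leq n}} \to \F_{\Fs^{\leq n}}$, since $U \inc S$ with $|S| \leq n$ forces $|U| \leq n$, and localizing $\Psh^\oplus(A^{eff}(\F_{\F^{\leq n}}))$ at the morphisms $B_S$ with $|S| \leq n$ reproduces the proof of Theorem~\ref{thm:phi}, yielding $\Fun^\oplus(A^{eff}(\F_{\Fs^{\leq n}}), \mb{E}) \simeq \Fun^\oplus(A^{eff}(\F_{\F^{\leq n}}), \mb{E})$ whenever $\mb{E}$ is presentable and complementable semiadditive, in particular when $\mb{E}$ is stable and presentable. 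Combining these, $\Mack(\Fs^{\leq n}, \mb{E}) \simeq \Fun(A^{eff}(\F^{\leq n}), \mb{E})$, and I would finish by identifying the latter with $\Fun^{\leq n}(A^{eff}(\F), \mb{E})$ via restriction along $A^{eff}(\F^{\leq n}) \inj A^{eff}(\F)$: a degree-$\leq n$ functor coincides with the right Kan extension of its restriction --- the cartesian-cube condition of Proposition~\ref{prop:nexcessim} expresses $F(S)$ for $|S| > n$ as the limit of $F$ over the proper subsets of $S$ --- so the restriction is fully faithful, while conversely the right Kan extension of an arbitrary functor on $A^{eff}(\F^{\leq n})$ is degree $\leq n$, which is exactly the cross-effect-vanishing computation of the proof of Proposition~\ref{prop:nexcessim}, transported from $A(\F)$ to $A^{eff}(\F)$ as at the start of the proof of Theorem~\ref{thm:anacon}.

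To verify the stated formula $M_F(U) \simeq cr_U F(\bb{S}, \cdots, \bb{S})$, I would chase an object $M \in \Mack(\Fs^{\leq n}, \mb{E})$ along the composite: under the truncated Theorem~\ref{thm:phi} step $M$ is carried to the functor $V \mapsto \bigoplus_{W \inc V} M(W)$ on $A^{eff}(\F^{\leq n})$, so the associated functor $F : \Sp_{\geq 0} \to \mb{E}$ satisfies $F(\bb{S}^{\oplus V}) \simeq \bigoplus_{W \inc V} M(W)$, and the total fiber of the cube $V \mapsto F(\bb{S}^{\oplus V})$ over the subsets of $U$ is $M(U)$ by M\"obius inversion, which is precisely $cr_U F(\bb{S}, \cdots, \bb{S})$. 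I do not expect any single step to be a serious obstacle, since each is a theorem already proved above; the substantive work is keeping the several slightly different truncations --- the $n$-excisive condition on spectra, the degree-$\leq n$ condition on $A(\F)$ and on $A^{eff}(\F)$, and the passage from $\Fs$ to $\Fs^{\leq n}$ on the coproduct-completed surjection categories --- consistently aligned along the chain, and checking that the truncated versions of Proposition~\ref{prop:kappa} and Theorem~\ref{thm:phi} genuinely go through unchanged. That, together with the compatibility of every functor in the chain with evaluation on the sphere spectrum, is where I would take the most care.
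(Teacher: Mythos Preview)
Your chain on the calculus side---$\iota^*_n$, then $\Im_n^*$, then $R_n$---is exactly the paper's. The divergence is on the combinatorial side: the paper applies the \emph{untruncated} equivalences $\kappa^*$ and $\phi^*$ of Sections~\ref{sec:semiadd}--\ref{sec:adjret} to the full categories of additive functors, tracks the degree-$\leq n$ subcategories through them (this is the ``simple matter of unwrapping the definitions'' in the proof), and only then invokes the companion paper \cite[Corollary~2.33]{Gla15a} for the identification $\Mack(\Fs^{\leq n}, \mb{E}) \simeq \Mack^{\leq n}(\Fs, \mb{E})$. You instead try to truncate $\kappa$ and $\phi$ at the level of the indexing categories themselves, and replace the citation to \cite{Gla15a} by a right-Kan-extension argument.

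That is where there is a genuine gap. Neither $\F^{\leq n}$ nor $\F_{\F^{\leq n}}$ has pullbacks once $n \geq 2$: the fiber product of two $n$-element sets over a point has $n^2$ elements, and in $\F_{\F^{\leq n}}$ a short cardinality count shows this cannot be redistributed across the outer index set. Hence $A^{eff}(\F^{\leq n})$ and $A^{eff}(\F_{\F^{\leq n}})$ are not defined, and Proposition~\ref{prop:kappa} cannot ``apply verbatim''. If instead you intend the full subcategory of $A^{eff}(\F)$ on sets of size $\leq n$, note that its morphisms are still spans through apex sets of \emph{arbitrary} size, so the free-commutative-monoid identification of Proposition~\ref{prop:kappa} has no truncated analogue; and your right-Kan-extension step inherits the same problem: the cube condition expresses $F(S)$ as a limit over the poset of proper subsets via the inert projections, whereas the right Kan extension is a limit over the much larger under-category of all spans from $S$ into sets of size $\leq n$, and identifying the two would require a cofinality argument you have not supplied. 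By contrast $\F_{\Fs^{\leq n}}$ \emph{does} have pullbacks (the proof of Lemma~\ref{lem:surjpbs} goes through, since any $V \in \Fs^{\leq n}$ mapping to $Y \X_X Z$ has image of size $\leq |V| \leq n$), which is precisely why the paper can afford to truncate only at that one endpoint.
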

\begin{proof}
	The equivalence is written as a composite as follows:
	\begin{align*}
	\Mack(\Fs^{\leq n}, \mb{E}) & \os{\Xi^{\Fs^{\leq n}}} \to \Mack^{\leq n}(\Fs, \mb{E}) \\
	& \os{\phi^*} \to \Fun^{\oplus, \leq n}(A^{eff}(\F_\F), \mb{E}) \\
	& \os{\kappa^*} \to \Fun^{\leq n}(A^{eff}(\F), \mb{E}) \\
	& \os{(R_n)^{-1}} \to \Fun^{\leq n}(A(\F), \mb{E}) \\
	& \os{(\Im_n^*)^{-1}} \to \Fun^{n-exc}(\Sp^{\omega}_{\geq 0}, \mb{E}) \\
	& \os{(\iota^*)^{-1}} \to \Fun^{n-exc}(\Sp^{\omega}, \mb{E}).
	\end{align*}
	A word of clarification is necessary, because some of these categories are minor variants appearing for the first time. $\Mack^{\leq n}(A^{eff}(\F_{\Fs}), \mb{E})$ is the full subcategory of \[\Mack(\Fs, \mb{E}) := \Fun^{\oplus}(A^{eff}(\F_{\Fs}), \mb{E})\]
	spanned by those functors whose value on $U \in \Fs$ is zero whenever $|U| > n$. The equivalence $\Xi^{\Fs^{\leq n}}$ is a case of \cite[Corollary 2.33]{Gla15a}. $\Fun^{\oplus, \leq n}(A^{eff}(\F_\F), \mb{E})$ is both the essential image of $\Mack^{\leq n}(\Fs, \mb{E})$ under \[ \phi^* : \Mack(\Fs, \mb{E}) \to \Fun^{\oplus}(A^{eff}(\F_\F), \mb{E}) \] and the essential image of $\Fun^{\leq n}(A^{eff}(\F), \mb{E})$ under 
	\[ (\kappa^*)^{-1} : \Fun(A^{eff}(\F), \mb{E}) \to \Fun^{\oplus}(A^{eff}(\F_\F), \mb{E}); \] seeing that these two categories coincide is a simple matter of unwrapping the definitions.
	
	Finally, we note that there are explicit formulae for the functors appearing as inverses. The inverse of $R_n$ is left Kan extension along the additive completion functor $A^{eff}(\F) \to A(\F)$ followed by localization into the category of degree $n$ functors. The inverse of ${\Im_n^*}$ is left Kan extension along $\Im_n$. Finally, as explained in the proof of Proposition \ref{prop:connsp}, the inverse of $\iota^*$ is given by precomposition with the connective cover functor $\tau_{\geq 0}$ followed by $n$-excisivization $P_n$.

\end{proof}
	For ease of reference, we include a table matching each of these functor to the location of its definition and the proof that it's an equivalence:
		\begin{center}
			\begin{tabular}{ | l | l | l | }
				\hline
				\bf{Functor}  & \bf{Definition} & \bf{Proof} \\
				\hline
				$\Xi^{\Fs^{\leq n}}$ & \cite[Definition 2.22]{Gla15a} & \cite[Corollary 2.33]{Gla15a} \\
				\hline
				$\phi^*$ & \ref{def:phi} & \ref{thm:phi} \\
				\hline
				$\kappa^*$ & \ref{prop:kappa} & \ref{prop:kappa} \\
				\hline
				$R_n$ & Theorem \ref{thm:anacon} & Theorem \ref{thm:anacon} \\
				\hline
				$\Im_n^*$ & Corollary \ref{cor:resaf} & Proposition \ref{prop:nexcessim} \\
				\hline
				$\iota_*$ & Proposition \ref{prop:connsp} & Proposition \ref{prop:connsp} \\
				\hline

			\end{tabular}
		\end{center}
\begin{rem}
	Let ${\td{F}_\text{surj}}^{\leq n}$ be the full subcategory of $\Fs^{\leq n}$ spanned by the \emph{nonempty} sets. Then clearly 
	\[ \Fs^{\leq n} \simeq {\td{F}_\text{surj}}^{\leq n} \amalg \{\emptyset\}. \]
	This disjoint union extends to a direct sum decomposition of semiadditive categories
	\[ A^{eff}(\F_{\Fs^{\leq n}}) \simeq A^{eff}(\F_{{\td{F}_\text{surj}}^{\leq n}}) \oplus A^{eff}(\F_{\{\emptyset\}}); \]
	in other words, since $A^{eff}(\F_{\{\emptyset\}}) \simeq A^{eff}(\F)$ is the free semiadditive category on one generator, an $\mb{E}$-valued Mackey functor $M$ on $\Fs^{\leq n}$ is the same data as a Mackey functor $\td{M}$ on ${\td{F}_\text{surj}}^{\leq n}$ together with an object $M(\emptyset)$ of $\mb{E}$. This mirrors the observation that an $n$-excisive functor $F : \Sp^{\omega} \to \mb{E}$ decomposes as the direct sum of a reduced $n$-excisive functor $\td{F} : \Sp^{\omega} \to \mb{E}$ - that is, $\td{F}(0) \simeq 0$ - and a constant functor $\mb{const}(F(0))$.
	
	Indeed, if $F$ and $M$ correspond under the equivalence of Theorem \ref{thm:main}, then it's easy to see that $F(\emptyset) \simeq M(\emptyset)$, and thus $\Mack({\td{F}_\text{surj}}^{\leq n})$ - the full subcategory of $\Mack(\Fs^{\leq n})$ of objects $M$ for which $M(\emptyset) \simeq 0$ - is equivalent to the category of \emph{reduced} $n$-excisive functors $\Sp^{\omega} \to \mb{E}$. 
\end{rem}

\bibliographystyle{alpha}
\bibliography{mybib}
\end{document}